\newalphalph{\fnsymbolmult}[mult]{\@fnsymbol}{5}
\newtheorem{thm}{Theorem}
\newtheorem{lem}[thm]{Lemma}
\newtheorem{cor}[thm]{Corollary}
\newtheorem{prop}[thm]{Proposition}
\theoremstyle{definition}
\newtheorem{defn}[thm]{Definition}
\theoremstyle{remark}
\newtheorem*{rmk}{Remark}
\newcommand{\eps}{\varepsilon}
\newcommand{\DEF}{{:=}}
\newcommand{\Sp}{\mathbb{S}}
\newcommand{\IL}{\mathbb{L}}
\newcommand{\CF}{\mathrm{CF}}
\newcommand{\PT}[1]{\mathbf{#1}}
\DeclareMathOperator{\dd}{\mathrm{d}}
\newcommand{\xctint}{I}
\newcommand{\numint}{Q}
\DeclareMathOperator{\gammafcn}{\Gamma}
\DeclareMathOperator{\diam}{diam}
\DeclareMathOperator{\wce}{wce}
\DeclareMathOperator{\HyperF}{F}
\newcommand{\Hypergeom}[5]{{\sideset{_#1}{_#2}\HyperF\!\left(\substack{\displaystyle#3\\\displaystyle#4};#5\right)}}
\newcommand{\Pochhsymb}[2]{{\left(#1\right)_{#2}}}
\title[QMC designs: optimal order Quasi Monte Carlo Integration on the sphere]{QMC designs: optimal order Quasi Monte Carlo Integration schemes on the sphere}
\author[J. S. Brauchart, E. B. Saff, I. H. Sloan, and R. S. Womersley]{J. S. Brauchart, E. B. Saff, I. H. Sloan, and R. S. Womersley} 
\thanks{\noindent This research was supported by an Australian Research Council Discovery Project. \\
The research of the second author was also supported by U.S. National Science Foundation grants DMS-0808093 and DMS-1109266. }
\date{\today}
\begin{document}

\address{J. S. Brauchart, I. H. Sloan, and R. S. Womersley:
School of Mathematics and Statistics,
University of New South Wales,
Sydney, NSW, 2052,
Australia }
\email{j.brauchart@unsw.edu.au}
\email{r.womersley@unsw.edu.au}
\email{i.sloan@unsw.edu.au}
\address{E. B. Saff:
Center for Constructive Approximation,
Department of Mathematics,
Vanderbilt University,
Nashville, TN 37240,
USA}
\email{edward.b.saff@vanderbilt.edu}

\begin{abstract}
We study equal weight numerical integration, or Quasi Monte Carlo (QMC) rules, for functions in a Sobolev space $\mathbb{H}^s( \mathbb{S}^d)$ with smoothness parameter $s > d/2$ defined over the unit sphere $\mathbb{S}^d$ in $\mathbb{R}^{d+1}$. Focusing on $N$-point configurations that achieve optimal order QMC error bounds (as is the case for efficient  spherical designs), we are led to introduce the concept of QMC designs: these are sequences of $N$-point configurations $X_N$ on $\mathbb{S}^d$ such that the worst-case error satisfies
\begin{equation*} 
\sup_{\substack{f \in \mathbb{H}^s( \mathbb{S}^d ), \\ \| f \|_{\mathbb{H}^s} \leq 1}} \Bigg| \frac{1}{N} \sum_{\mathbf{x} \in X_N} f( \mathbf{x} ) - \int_{\mathbb{S}^d} f( \mathbf{x} ) \, \mathrm{d} \sigma_d( \mathbf{x} ) \Bigg| = \mathcal{O}\big( N^{-s/d} \big), \qquad N \to \infty, 
\end{equation*}
with an implied constant that depends on the $\mathbb{H}^s( \mathbb{S}^d )$-norm, but is independent of $N$.  
Here $\sigma_d$ is the normalized surface measure on $\mathbb{S}^d$.

{\sloppy
We provide methods for generation and numerical testing of QMC designs.
An essential tool is an expression for the worst-case error 
in terms of a reproducing kernel for the space $\mathbb{H}^s( \mathbb{S}^d )$ with $s > d/2$. As a consequence of this and a recent result of Bondarenko et al. on the existence of spherical designs with appropriate number of points, we show that minimizers of the $N$-point energy for this kernel form a sequence of QMC designs for $\mathbb{H}^s( \mathbb{S}^d )$. Furthermore, without appealing to the Bondarenko et al. result,  we prove that point sets that maximize the sum of suitable powers of the Euclidean distance between pairs of points form a sequence of QMC designs for $\mathbb{H}^s( \mathbb{S}^d )$ with $s$ in the interval ${(d/2,d/2+1)}$. For such spaces there exist reproducing kernels with simple closed forms that are useful for numerical testing of optimal order Quasi Monte Carlo integration.}

Numerical experiments suggest that many familiar sequences of point sets on the sphere (equal area points, spiral points, minimal [Coulomb or logarithmic] energy points, and Fekete points) are QMC designs for appropriate values of $s$.
For comparison purposes we show that configurations of random points that are independently and uniformly distributed on the sphere do not constitute QMC designs for any $s>d/2$.  

If $(X_N)$ is a sequence of QMC designs for $\mathbb{H}^s( \mathbb{S}^d)$, we prove that it is also a sequence of QMC designs for  $\mathbb{H}^{s'}( \mathbb{S}^d)$  for all $s'\in (d/2,s)$. This leads to the question of determining the supremum of such $s$, for which we provide estimates based on computations for the aforementioned sequences.
\end{abstract}

\keywords{Discrepancy, spherical design, QMC design, numerical integration, quadrature, worst-case error, Quasi Monte Carlo, sphere}
\subjclass[2010]{Primary 65D30, 65D32; Secondary 11K38, 41A55}

\maketitle


\section{Introduction}
\label{sec:intro}


In this paper we introduce a new notion for sequences of finite point sets on the unit sphere $\mathbb{S}^d$ in the Euclidean space $\mathbb{R}^{d+1}$, $d \geq 2$, namely that of {\em sequences of QMC designs}. These are sequences that emulate spherical designs in that they provide optimal order equal weight numerical integration (or Quasi Monte Carlo) rules for certain Sobolev spaces of functions over the unit sphere $\mathbb{S}^d$.

A spherical $t$-design, a concept introduced in the groundbreaking paper \cite{DeGoSei1977} by Delsarte, Goethals and Seidel, is a finite subset $X_N \subset\mathbb{S}{^d}$ with the characterizing property that an equal weight integration rule with nodes from $X_N$ integrates exactly all polynomials $P$ with degree $\leq t$; that is,
\begin{equation} \label{eq:spherical.designs}
\frac{1}{N} \sum_{\PT{x} \in X_N} P( \PT{x})= \int_{\mathbb{S}^d} P( \PT{x} ) \, \dd \sigma_d( \PT{x} ), \qquad \deg P \leq t.
\end{equation}
Here $N=|X_N|$ is the cardinality of $X_N$, or the number of points of the spherical design, while the integral is with respect to the normalized 
surface measure $\sigma_d$ on $\mathbb{S}^d$, and the polynomials of degree $\le t$ are the restrictions to $\mathbb{S}^d$ of the polynomials of degree $\le t$ on $\mathbb{R}^{d+1}$.

Sequences of spherical designs have a known fast-convergence property in Sobolev spaces.  (See Section \ref{subsec:sobolev} below for the definition of the Sobolev space $\mathbb{H}^s( \mathbb{S}^d$).) This property, stated in the following theorem, was first proved for the particular case $s=3/2$ and $d=2$ in \cite{HeSl2005}, then extended to all $s>1$ for $d=2$ in \cite{HeSl2006}, and finally extended to all $s>d/2$ and all $d\ge 2$ in \cite{BrHe2007}. (The results in those papers were proved for all positive-weight integration rules with an appropriate degree of polynomial accuracy in relation to the number of points, but here we restrict our attention to equal weight rules.)

\begin{thm}\label{thm:s.d.prop}
Given $s>d/2$, there exists $C(s,d)>0$ depending on the $\mathbb{H}^s( \mathbb{S}^d )$-norm such that for every $N$-point spherical $t$-design $X_N$ on $\mathbb{S}^d$ there holds
\begin{equation} \label{eq:asymptotic_in_t.bound}
\sup_{\substack{f \in \mathbb{H}^s( \mathbb{S}^d ), \\ \| f \|_{\mathbb{H}^s} \leq 1}} \Bigg| \frac{1}{N} \sum_{\PT{x} \in X_N} f( \PT{x} ) - \int_{\mathbb{S}^d} f( \PT{x} ) \dd \sigma_d( \PT{x} ) \Bigg| \leq \frac{C(s,d)}{t^s}.
\end{equation}
\end{thm}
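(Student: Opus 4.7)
The plan is to combine the reproducing kernel Hilbert space structure of $\mathbb{H}^s(\mathbb{S}^d)$ (which is available because $s>d/2$) with the polynomial exactness of spherical designs, and to close the remaining gap via a Jackson--Marcinkiewicz--Zygmund argument.

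First I would express the squared worst-case error through the reproducing kernel $K^{(s)}$ of $\mathbb{H}^s(\mathbb{S}^d)$ as the quadratic form
\begin{equation*}
\wce(X_N)^2 = \frac{1}{N^2}\sum_{i,j} K^{(s)}(\PT{x}_i,\PT{x}_j) - \frac{2}{N}\sum_i \int K^{(s)}(\PT{x}_i,\PT{y})\,\dd\sigma_d(\PT{y}) + \int\!\!\int K^{(s)}(\PT{x},\PT{y})\,\dd\sigma_d(\PT{x})\,\dd\sigma_d(\PT{y}).
\end{equation*}
Expanding $K^{(s)}$ in spherical harmonics via the addition formula as $K^{(s)}(\PT{x},\PT{y}) = \sum_{\ell\geq 0} a_\ell^{(s)} \sum_k Y_{\ell,k}(\PT{x})\overline{Y_{\ell,k}(\PT{y})}$ with kernel coefficients $a_\ell^{(s)}\asymp(1+\ell)^{-2s}$ coming from the Sobolev norm, and noting that the $\ell=0$ contributions cancel between the three pieces, one arrives at
\begin{equation*}
\wce(X_N)^2 = \sum_{\ell\geq 1} a_\ell^{(s)} \sum_k\Bigl|\frac{1}{N}\sum_{\PT{x}\in X_N}Y_{\ell,k}(\PT{x})\Bigr|^2.
\end{equation*}
By the spherical $t$-design property, every inner sum vanishes for $1\leq\ell\leq t$, so only a tail sum over $\ell>t$ remains.

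The tail bound is the main work. A crude Cauchy--Schwarz estimate gives $\sum_k|\sum_{\PT{x}} Y_{\ell,k}(\PT{x})|^2 \leq N^2 Z(d,\ell)$, whence $\wce(X_N)^2 \leq \sum_{\ell>t} a_\ell^{(s)}Z(d,\ell) \asymp t^{d-2s}$; this yields only $\wce(X_N) \leq C\,t^{(d-2s)/2}$, weaker than the claimed $t^{-s}$ by a factor of $t^{d/2}$. To close the gap I would invoke a Jackson--Marcinkiewicz--Zygmund argument: for $f\in\mathbb{H}^s$ with $\|f\|_{\mathbb{H}^s}\leq 1$, construct a filtered polynomial projection $p$ of $f$ of degree at most $t$ satisfying the Jackson-type estimate $\|f-p\|_{L^2}\leq C\,t^{-s}$. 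Since the $t$-design integrates $p$ exactly, the error equals the error on $f-p$; its integral part is bounded by $\|f-p\|_{L^2}$, and its quadrature part by $\bigl(\tfrac{1}{N}\sum_{\PT{x}}|f-p|^2\bigr)^{1/2}$ via Cauchy--Schwarz, the latter controlled by $\|f-p\|_{L^2}$ through a dyadic (Littlewood--Paley) decomposition of $f-p$ into polynomial pieces and a Marcinkiewicz--Zygmund inequality applied piece by piece.

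The main obstacle lies precisely in this last step: MZ inequalities apply only to polynomials of controlled degree (roughly $\alpha t$ for some $\alpha<1$), while $f-p$ has frequencies stretching arbitrarily above $t$. Overcoming this requires Mhaskar--Narcowich--Ward-type MZ inequalities for positive-weight quadrature rules, combined with a careful dyadic decomposition whose pieces fit inside the MZ range and whose $L^2$ norms decay geometrically from the $\mathbb{H}^s$ control. An alternative route is to prove the general worst-case bound $\wce \leq C(s,d)\,N^{-s/d}$ for positive-weight rules exact on polynomials of degree $\leq t$, then invoke the Delsarte--Goethals--Seidel lower bound $N \geq c_d\,t^d$ for spherical $t$-designs to recover $\wce \leq C(s,d)\,t^{-s}$; either way, controlling the dependence of the implied constant $C(s,d)$ on $s$ and $d$ alone (and not on the geometry of $X_N$) is the technical crux.
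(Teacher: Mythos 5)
Note first that the paper does not prove Theorem~\ref{thm:s.d.prop} at all: it is quoted from \cite{HeSl2005}, \cite{HeSl2006} and \cite{BrHe2007}, and the only internal commentary is the remark before Theorem~\ref{thm:wce.S.d.bounds} that termwise estimation of the tail fails and that those papers use a ``kernel-splitting'' device. Your reduction is the correct starting point and coincides with the paper's framework: Proposition~\ref{prop:wce} together with Lemma~\ref{lem:positive.definiteness} gives $[\wce(\numint[X_N];\mathbb{H}^s(\mathbb{S}^d))]^2=\sum_{\ell>t}a_\ell^{(s)}\sum_k|\frac1N\sum_j Y_{\ell,k}(\PT{x}_j)|^2$ for a $t$-design, and your observation that the crude bound only yields $t^{d/2-s}$ is exactly the paper's own warning. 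But everything after that point is where the theorem actually lives, and your sketch does not close it.

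The gap is concretely in the Marcinkiewicz--Zygmund step. You need $\frac1N\sum_j|g(\PT{x}_j)|^2\le C\|g\|_{\mathbb{L}_2}^2$ with a constant uniform over the dyadic pieces $g_j$ of $f-p$, whose degrees are of order $2^jt$ for \emph{all} $j\ge1$. The design property gives this for free (indeed with equality) only for $\deg g\le t/2$, since then $g^2$ is integrated exactly; for higher degrees a uniform MZ constant requires the nodes to be well distributed at scale $(2^jt)^{-1}$ for every $j$, which an arbitrary $t$-design need not satisfy --- the union of two $t$-designs is again a $t$-design and may contain coincident or nearly coincident points, and Reimer's regularity result \cite{Re2000} quoted in the paper applies only when $N\asymp t^d$ (and even Property~R controls caps only at scale $N^{-1/d}$). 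Without such an inequality, the reproducing-kernel (Nikolskii) bound $\|g_j\|_\infty^2\le Z(d+1,2^{j+1}t)\,\|g_j\|_{\mathbb{L}_2}^2$ reinstates precisely the factor $t^{d/2}$ you are trying to remove. Your fallback route is also unsound as stated: the intermediate claim $\wce\le C(s,d)N^{-s/d}$ for every equal-weight rule exact on degree $t$ is false, since the $M$-fold repetition of a fixed $t$-design is a $t$-design with $N=MN_0$ points and unchanged worst-case error; the bound in $t$ is the primary one, and the $N^{-s/d}$ form is \emph{derived} from it when $N\asymp t^d$, not conversely. The device that actually closes the argument in the cited papers is different: one majorizes the tail kernel $\mathfrak{T}_t^{(s)}$ pointwise on $[-1,1]$ by a zonal polynomial kernel of degree $\le t$ whose mean value over the sphere is $\mathcal{O}(t^{-2s})$; the design then integrates this majorant exactly in each variable, so the double sum collapses to that mean value. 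Constructing the majorant, via localization estimates for filtered versions of $\mathfrak{T}_t^{(s)}$ near $z=1$, is the technical heart that your proposal would still need to supply.
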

Note that the constant $C(s,d)$ in this theorem does not depend on $t$ or on $N$, or on the particular spherical design $X_N$.  Note too that the condition $s>d/2$ is a natural one, since by the Sobolev embedding theorem this is the condition needed for $\mathbb{H}^s( \mathbb{S}^d)$ to be embedded in $C(\mathbb{S}^d)$.

The relation between $N$ and $t$ in a spherical design is not fixed, but there are known lower bounds on $N$ (see \eqref{eq:DGS-bound} below) that tell us that $N$ is at least of order $t^d$, and a recent result \cite{BoRaVi2011arXiv} of Bondarenko et al. 
(see Theorem~\ref{thm:bondarenko.et.al} below) 
asserts that given $t$ there always exists a spherical design with $N \asymp t^d$. Here we write $a_n\asymp b_n$ to mean that there exist positive constants $c_1$ and $ c_2$ independent of $n$ such that $c_1 a_n\le b_n\le c_2 a_n$ for all $n$.

Motivated by these facts and the belief that the only interesting sequences of spherical designs are those with $N\asymp t^d$, we now define the notion of a sequence of QMC designs.
\begin{defn} \label{def:specific_approx.sph.design}
Given $s>d/2$, a sequence $(X_N)$ of $N$-point configurations  on $\mathbb{S}^d$ with $N\to\infty$ is said to be a {\em sequence of QMC designs for $\mathbb{H}^s( \mathbb{S}^d )$} if there exists $c(s,d)>0$, independent of $N$, such that
\begin{equation}\label{eq:approxfors}
\sup_{\substack{f \in \mathbb{H}^s( \mathbb{S}^d ), \\ \| f \|_{\mathbb{H}^s} \leq 1}} \Bigg| \frac{1}{N} \sum_{\PT{x} \in X_N} f( \PT{x} ) - \int_{\mathbb{S}^d} f( \PT{x} ) \dd \sigma_d( \PT{x} ) \Bigg|  \leq \frac{c(s,d)}{N^{s/d}}.
\end{equation}
\end{defn}
In this definition $X_N$ need not be defined for all natural numbers $N$: it is sufficient that $X_N$ exists for an infinite subset of the natural numbers.
By a special case of theorems in \cite{HeSl2005b} and \cite{He2006}, the exponent of $N$ in \eqref{eq:approxfors} cannot be larger than $s/d$:

\begin{thm} \label{thm:wce.lower.bound}
Given $s>d/2$, there exists $c^\prime(s,d)>0$ depending on the $\mathbb{H}^s( \mathbb{S}^d )$-norm such that for any $N$-point configuration on $\mathbb{S}^d$ 
\begin{equation} \label{eq:wce.lower.bound}
\frac{c^\prime(s,d)}{N^{s/d}} \leq \sup_{\substack{f \in \mathbb{H}^s( \mathbb{S}^d ), \\ \| f \|_{\mathbb{H}^s} \leq 1}} \Bigg| \frac{1}{N} \sum_{\PT{x} \in X_N} f( \PT{x} ) - \int_{\mathbb{S}^d} f( \PT{x} ) \dd \sigma_d( \PT{x} ) \Bigg|.
\end{equation}
\end{thm}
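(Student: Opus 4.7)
My plan is to exhibit, for each $N$-point configuration $X_N \subset \mathbb{S}^d$, an explicit ``fooling'' function $g \in \mathbb{H}^s( \mathbb{S}^d )$ with $\|g\|_{\mathbb{H}^s} = 1$ whose equal-weight cubature error is bounded below by a constant multiple of $N^{-s/d}$. The function $g$ will be a normalized sum of many disjointly supported smooth bumps, each localized in a small spherical cap that contains no point of $X_N$.

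The construction is as follows. By a standard covering argument (take a maximal $r$-separated set on $\mathbb{S}^d$ of cardinality $M$ with $r \DEF c\, N^{-1/d}$ and $c$ small enough that $M \geq 2N$), one obtains $M$ pairwise disjoint spherical caps $B_1,\dots,B_M$ of radius $\asymp N^{-1/d}$; a pigeonhole argument then produces an index set $I$ with $|I| \geq N$ such that $B_i \cap X_N = \emptyset$ for every $i \in I$. Fix once and for all a nonnegative, non-zero smooth reference bump $\phi_0 \in C^\infty( \mathbb{S}^d )$ supported in a spherical cap of radius $1/2$, and for each $i \in I$ let $\phi_i$ be obtained from $\phi_0$ by a rigid rotation and a local-coordinate dilation, arranged so that $\phi_i$ is supported in a cap of radius $\asymp r$ contained in $B_i$. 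A routine scaling computation in local coordinates (equivalently, via the spherical-harmonic representation of $\|\cdot\|_{\mathbb{H}^s}$) gives
\begin{equation*}
\int_{\mathbb{S}^d} \phi_i \, \dd\sigma_d \;\asymp\; r^d, \qquad \|\phi_i\|_{\mathbb{H}^s} \;\asymp\; r^{d/2-s}.
\end{equation*}

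Set $f \DEF \sum_{i \in I}\phi_i$. Granting the near-orthogonality estimate $\|f\|_{\mathbb{H}^s}^2 \asymp \sum_{i \in I}\|\phi_i\|_{\mathbb{H}^s}^2$ (discussed below), one obtains $\|f\|_{\mathbb{H}^s}^2 \asymp N \cdot r^{d-2s} \asymp N^{2s/d}$, whereas $\int_{\mathbb{S}^d} f \, \dd \sigma_d \asymp N\, r^d \asymp 1$ and $f(\PT{x}) = 0$ for every $\PT{x} \in X_N$ (by the choice of $I$). Letting $g \DEF f/\|f\|_{\mathbb{H}^s}$ gives $\|g\|_{\mathbb{H}^s} = 1$ and
\begin{equation*}
\Bigg| \frac{1}{N}\sum_{\PT{x}\in X_N} g(\PT{x}) - \int_{\mathbb{S}^d} g \, \dd\sigma_d \Bigg| \;=\; \frac{\int_{\mathbb{S}^d} f\,\dd\sigma_d}{\|f\|_{\mathbb{H}^s}} \;\gtrsim\; N^{-s/d},
\end{equation*}
which, taken with this $g$ as a competitor in the supremum, yields \eqref{eq:wce.lower.bound}.

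The main technical obstacle is justifying the near-orthogonality $\|f\|_{\mathbb{H}^s}^2 \asymp \sum_{i \in I} \|\phi_i\|_{\mathbb{H}^s}^2$. For integer $s$ this is immediate, since $\|\cdot\|_{\mathbb{H}^s}$ is an $L^2$-norm of local differential operators and the $\phi_i$ have pairwise disjoint supports, so the cross terms vanish exactly. For non-integer $s$ the Sobolev norm is non-local: viewing it through the Bessel-potential/spherical-harmonic representation (or, for the fractional part, the Gagliardo-type double integral with kernel $\asymp |\PT{x}-\PT{y}|^{-d-2s}$), the cross term $\langle \phi_i,\phi_j\rangle_{\mathbb{H}^s}$ pairs $\phi_i$ and $\phi_j$ against a kernel that decays faster than $|\PT{x}-\PT{y}|^{-d}$ at the separation $\dist(B_i,B_j) \gtrsim r$. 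Summing over pairs and using the packing bound on $\{B_i\}$ shows that the total off-diagonal contribution is dominated by the diagonal sum $\sum_i \|\phi_i\|_{\mathbb{H}^s}^2$. This localization argument in fractional Sobolev spaces is the one delicate point; everything else is routine scaling and pigeonhole.
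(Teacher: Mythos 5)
Your construction is correct, and it is essentially the argument behind the result the paper cites: the paper gives no proof of Theorem~\ref{thm:wce.lower.bound} itself, deferring to Hesse--Sloan \cite{HeSl2005b} and Hesse \cite{He2006}, whose lower bounds are obtained by exactly this device of a nonnegative fooling function built from $\asymp N$ rescaled bumps placed in pairwise disjoint caps of radius $\asymp N^{-1/d}$ that avoid the nodes. Note that you only need the upper bound $\|f\|_{\mathbb{H}^s}^2 \lesssim \sum_{i}\|\phi_i\|_{\mathbb{H}^s}^2$ (not the full two-sided near-orthogonality), and your off-diagonal estimate delivers precisely that, so the one ``delicate point'' you flag is genuinely routine.
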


The following theorem, obtained by appealing to results of Brandolini et al.~\cite{BrChCoarXiv1012.5409v1}, asserts that if $(X_N)$ is a sequence of QMC designs for $\mathbb{H}^{s}(\mathbb{S}^d)$, then it is also so for all coarser Sobolev spaces $\mathbb{H}^{s^\prime}( \mathbb{S}^d )$ with $d/2 < s^\prime < s$. Like all statements in the paper needing formal proof, it is  established in Section \ref{sec:proofs}.

\begin{thm}\label{thm:rough}
Given $s>d/2$, let $(X_N)$ be a sequence of QMC designs for $\mathbb{H}^s(\mathbb{S}^d)$. Then $(X_N)$ is a sequence of QMC designs for $\mathbb{H}^{s'}(\mathbb{S}^d)$, for all $s'$ satisfying $d/2 < s' \le s$.
\end{thm}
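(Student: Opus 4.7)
My approach is to extract from Brandolini, Choirat, Colzani, Gigante, Seri and Travaglini~\cite{BrChCoarXiv1012.5409v1} an interpolation-type inequality of the form
\[
\wce\bigl(X_N;\, \mathbb{H}^{s'}(\mathbb{S}^d)\bigr) \;\leq\; C_{s,s',d}\, \wce\bigl(X_N;\, \mathbb{H}^{s}(\mathbb{S}^d)\bigr)^{s'/s},
\]
valid for every $N$-point configuration $X_N \subset \mathbb{S}^d$ and every pair $d/2 < s' \leq s$. Given such an inequality, the proof reduces to a one-line substitution: by the QMC-design hypothesis, $\wce(X_N;\,\mathbb{H}^s) \leq c(s,d) N^{-s/d}$, hence
\[
\wce\bigl(X_N;\, \mathbb{H}^{s'}(\mathbb{S}^d)\bigr) \;\leq\; C_{s,s',d}\, c(s,d)^{s'/s}\, N^{-s'/d},
\]
which is precisely the QMC-design bound at smoothness $s'$. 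The endpoint $s' = s$ is trivial.

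Thus the plan has two steps: first, identify and, if necessary, adapt the comparison inequality inside~\cite{BrChCoarXiv1012.5409v1}, verifying that the setting of \emph{equal-weight} quadrature on $\mathbb{S}^d$ falls within its scope; second, apply the substitution above.

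The bulk of the work, and what I expect to be the main obstacle, is obtaining the sharp power $s'/s$ in the comparison. A direct operator interpolation of the error functional $\mu_N - \dd\sigma_d$ between the dual spaces $\mathbb{H}^{-s_0}$ and $\mathbb{H}^{-s}$, with $s_0 > d/2$ chosen so that $\wce(X_N;\mathbb{H}^{s_0})$ is uniformly bounded via the Sobolev embedding $\mathbb{H}^{s_0} \hookrightarrow C(\mathbb{S}^d)$, only yields the suboptimal exponent $(s'-s_0)/(s-s_0) < s'/s$; one cannot push $s_0$ down to $0$ because point evaluation is not bounded on $\mathbb{H}^0 = L^2$. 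The refinement achieved in~\cite{BrChCoarXiv1012.5409v1} presumably proceeds via a Littlewood--Paley decomposition $f = \sum_j f_j$ along spherical-harmonic dyadic blocks, combined with Nikolskii-type estimates controlling $\|f_j\|_\infty$ in terms of $\|f_j\|_2$, together with the equal-weight structure of the rule; after choosing the cut-off level that balances the two available bounds on $|E_N(f_j)|$, this machinery restores the correct exponent $s'/s$ and thereby delivers Theorem~\ref{thm:rough}.
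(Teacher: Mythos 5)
Your proposal is correct and follows essentially the same route as the paper: the paper's own proof consists precisely of the comparison inequality $\wce(\numint[X_N];\mathbb{H}^{s'}) \leq c(d,s,s')\,[\wce(\numint[X_N];\mathbb{H}^{s})]^{s'/s}$ (its Lemma~\ref{lem:aux.lem}, attributed to Theorem~3.1 of \cite{BrChCoarXiv1012.5409v1}) followed by the one-line substitution you describe, with the lemma's hypothesis $\wce<1$ disposed of by noting that the worst-case error of a QMC design tends to zero and absorbing finitely many exceptions into the constant. The only divergence is your guess at the internal mechanism of the cited inequality: the paper's self-contained derivation uses a Laplace-transform representation of $(1+\lambda_\ell)^{-s'}$ and small-time Gaussian bounds on the spherical heat kernel rather than a Littlewood--Paley/Nikolskii argument, but since you defer that step to the reference this does not affect the correctness of your proof.
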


It follows from this theorem that for every sequence of QMC designs $(X_N)$ there is some number $s^*$ such that $(X_N)$ is a sequence of QMC designs for all $s$ satisfying $d/2< s < s^*$, and is not a QMC design for $s>s^*$; that is
\begin{equation} \label{eq:s.star}
s^* \DEF  s^*[(X_N)] \DEF \sup\big\{ s : \text{$(X_N)$ is a sequence of QMC designs for $\mathbb{H}^s( \mathbb{S}^d )$}  \big\}.
\end{equation}
If $s^* = + \infty$, we append the adjective ``generic.''

\begin{defn} \label{def:approx.sph.design}
A sequence of $N$-point configurations $(X_N)$ on $\mathbb{S}^d$  is said to be a \emph{ sequence of generic QMC designs} if \eqref{eq:approxfors} holds for all $s>d/2$.
\end{defn}

As in Definition~\ref{def:specific_approx.sph.design}, $X_N$ need not be defined for all natural numbers $N$.
Obviously, every sequence of spherical $t$-designs with $N \asymp t^d$ as $t\to\infty$ is a sequence of generic QMC designs for $\mathbb{H}^s( \mathbb{S}^d )$, for {\bf all} $s > d / 2$. We noted already the claimed existence of a sequence of spherical $t$-designs with $N \asymp t^d$.
A simple application of that result yields the following.

\begin{thm} \label{thm:derrived.gen.sequ}
There exist $N$-point spherical $t$-designs $Y_N$ on $\mathbb{S}^d$ for $N = 1, 2, 3, \dots$ and $t \asymp N^{1/d}$ that form a sequence of generic QMC designs.
\end{thm}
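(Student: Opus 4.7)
The plan is to deduce Theorem~\ref{thm:derrived.gen.sequ} directly from Theorem~\ref{thm:bondarenko.et.al} combined with Theorem~\ref{thm:s.d.prop}. The Bondarenko et al.\ result, in its strong form, furnishes a constant $C_d > 0$ depending only on $d$ such that for every pair $(t, N)$ with $N \geq C_d\, t^d$ there exists an $N$-point spherical $t$-design on $\mathbb{S}^d$. The idea is to use this flexibility to choose, for each $N$, the largest admissible $t$, yielding a $t$-design whose size matches exactly the prescribed $N$.

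Concretely, given $N \in \mathbb{N}$ I would set $t(N) := \lfloor (N/C_d)^{1/d} \rfloor$. Then $N \geq C_d\, t(N)^d$ by construction, while at the same time $t(N) \asymp N^{1/d}$ with implied constants depending only on $d$. Invoking Theorem~\ref{thm:bondarenko.et.al} with this pair $(t(N), N)$ produces an $N$-point spherical $t(N)$-design $Y_N \subset \mathbb{S}^d$. Now fix any $s > d/2$: applying Theorem~\ref{thm:s.d.prop} to $Y_N$ bounds the worst-case error for $\mathbb{H}^s(\mathbb{S}^d)$ by $C(s,d)/t(N)^s$, and since $t(N) \asymp N^{1/d}$ this is $\mathcal{O}(N^{-s/d})$ with an implied constant depending only on $s$ and $d$. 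Because this holds simultaneously for every $s > d/2$, the sequence $(Y_N)$ meets Definition~\ref{def:approx.sph.design} and is thus a sequence of generic QMC designs.

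The main (and essentially only) nontrivial ingredient is the ability of Theorem~\ref{thm:bondarenko.et.al} to supply a spherical $t$-design of exactly $N$ points for \emph{every} $N$ at least of order $t^d$, not merely for a single value of $N$ per $t$. Without this flexibility one would only obtain $Y_N$ along a sparse subsequence of $N$, and bridging the gaps by padding with arbitrary points would generally lose the $N^{-s/d}$ rate for $s > d$ (the padded points would contribute an error of order $(N - M)/N$ which cannot be made smaller than $1/N$ in general). Given the strong form of the Bondarenko et al.\ theorem, however, the remainder of the argument amounts to nothing more than matching the orders $t \asymp N^{1/d}$ and reading off the conclusion from Theorem~\ref{thm:s.d.prop}.
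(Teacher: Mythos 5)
Your proposal is correct and follows essentially the same route as the paper: the paper likewise picks, for each $N$, the largest $t$ with $N \geq c_d t^d$ (organized as filling the gaps between $N_t = m_d t^d$ and $N_{t+1}$ using the full strength of Theorem~\ref{thm:bondarenko.et.al}), and then reads off the rate from Theorem~\ref{thm:s.d.prop} via $t \asymp N^{1/d}$. The only cosmetic difference is that the paper explicitly handles the finitely many small $N$ (where your $t(N)$ would be $0$) by using spherical $1$-designs, a detail that in any case only affects the constant.
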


For {\bf fixed} $s > d / 2$, there exist many sequences of QMC designs for $\mathbb{H}^s( \mathbb{S}^d )$ that are not composed of spherical designs.
Indeed, if $K$ is a reproducing kernel for the Sobolev space $\mathbb{H}^s( \mathbb{S}^d )$, $s > d/2$, we prove in Section \ref{sec:wce}
(by appealing to Theorem~\ref{thm:derrived.gen.sequ})
that $N$-point configurations ($N \geq 2$) minimizing the $K$-energy functional
\begin{equation}\label{eq:double}
\sum_{j=1}^N \sum_{i=1}^N K( \PT{x}_j, \PT{x}_i )
\end{equation}
form a sequence of QMC designs for this $\mathbb{H}^s( \mathbb{S}^d )$; cf. Theorem~\ref{thm:minimizers}.
For $s$ in the interval $(d/2, d/2+1)$ we show in Section~\ref{sec:generalized.distance.kernel} that, for $C$ a suitably large constant, $C - | \PT{x} - \PT{y} |^{2s-d}$ is a reproducing kernel for $\mathbb{H}^s( \mathbb{S}^d )$, and therefore 
the maximizers of the generalized sum of distances
\begin{equation} \label{eq:generalised.sum.dist}
\sum_{j=1}^N \sum_{i=1}^N \left| \PT{x}_j - \PT{x}_i \right|^{2s-d}, 
\end{equation}
$N = 2, 3, 4, \dots$ form a sequence of QMC designs for this $\mathbb{H}^s( \mathbb{S}^d )$. 

We also provide an alternative sufficient condition for QMC designs that utilizes polynomial truncations of a zonal reproducing kernel $K(\PT{x}, \PT{y}) = K(\PT{x}\cdot\PT{y})$, but requires also a regularity condition (Property R) imposed on the point configurations; see Definition~\ref{def:propertyR} and Theorem \ref{thm:characterization}. 

Numerical evidence presented later in this paper suggests that many familiar sequences of point sets on $\mathbb{S}^2$ (such as minimal [Coulomb or logarithmic] energy points, generalized spiral points, equal area points, and Fekete points) form sequences of QMC designs for $\mathbb{H}^s( \mathbb{S}^2 )$ for values of $s$ up to a supremum $s^*$ that depends on the particular sequence. Some conjectured values of $s^*$ are given in Section~\ref{sec:numerics}.

That the QMC design property \eqref{eq:approxfors} is not satisfied by all sequences of point sets follows from a probabilistic argument. 

\begin{thm}\label{thm:better}
Given $s > d/2$, the expected value of the squared worst-case error satisfies
\begin{equation*}
\sqrt{\mathbb{E}\Bigg[ \sup_{\substack{f \in \mathbb{H}^s( \mathbb{S}^d ), \\ \| f \|_{\mathbb{H}^s} \leq 1}} \Bigg| \frac{1}{N} \sum_{j=1}^N f( \PT{x}_j ) - \int_{\mathbb{S}^d} f( \PT{x} ) \dd \sigma_d( \PT{x} ) \Bigg|^2 \Bigg]} = \frac{b(s,d)}{N^{1/2}}
\end{equation*}
for some constant $b(s,d) > 0$, where the points $\PT{x}_1, \dots, \PT{x}_N$ are  independently and uniformly distributed on $\mathbb{S}^d$.
\end{thm}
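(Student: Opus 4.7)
The plan is to exploit the RKHS structure of $\mathbb{H}^s( \mathbb{S}^d )$ together with Fubini/independence, which is the standard route for randomized quadrature error analysis. First I would invoke the well-known formula for the squared worst-case error in an RKHS with a zonal reproducing kernel $K$: for any configuration $X_N = \{ \PT{x}_1, \ldots, \PT{x}_N \}$, the supremum on the left-hand side of the theorem equals
\begin{equation*}
\Psi(X_N) \DEF \frac{1}{N^2} \sum_{i,j=1}^N K( \PT{x}_i, \PT{x}_j ) - \frac{2}{N} \sum_{j=1}^N \int_{\mathbb{S}^d} K( \PT{x}, \PT{x}_j ) \, \dd \sigma_d( \PT{x} ) + \iint_{\mathbb{S}^d \times \mathbb{S}^d} K( \PT{x}, \PT{y} ) \, \dd \sigma_d( \PT{x} ) \, \dd \sigma_d( \PT{y} ).
\end{equation*}
The task is then to evaluate $\mathbb{E}[ \Psi(X_N) ]$ under the i.i.d.\ uniform distribution.

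Next I would use that $K$ can be taken zonal (the Sobolev inner product is rotation invariant), so that $K( \PT{x}, \PT{x} ) = V_K$ is a constant and, by rotation invariance of $\sigma_d$, $\int_{\mathbb{S}^d} K( \PT{x}, \PT{y} ) \, \dd \sigma_d( \PT{y} ) = \overline{K} = \iint K \, \dd \sigma_d \, \dd \sigma_d$ is a constant independent of $\PT{x}$. Splitting the double sum into its $N$ diagonal and $N(N-1)$ off-diagonal terms, and using independence of the $\PT{x}_j$ on the off-diagonal, gives
\begin{equation*}
\mathbb{E}[ \Psi(X_N) ] = \frac{V_K}{N} + \frac{N-1}{N} \overline{K} - 2 \overline{K} + \overline{K} = \frac{V_K - \overline{K}}{N}.
\end{equation*}
Setting $b(s,d) \DEF \sqrt{V_K - \overline{K}}$ yields the claimed identity.

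The hard part, such as it is, will be verifying the strict positivity $b(s,d) > 0$. For this I would expand the zonal kernel in spherical harmonics, $K( \PT{x}, \PT{y} ) = \sum_{\ell \geq 0} a_\ell^{(s)} Z(d, \ell) P_\ell^{(d)}( \PT{x} \cdot \PT{y} )$, where $a_\ell^{(s)} > 0$ are the Sobolev Fourier weights, $Z(d, \ell)$ is the dimension of the space of degree-$\ell$ spherical harmonics, and $P_\ell^{(d)}$ is the normalized Gegenbauer polynomial with $P_\ell^{(d)}(1) = 1$. Integration over $\mathbb{S}^d$ annihilates every $\ell \geq 1$ term, so $\overline{K} = a_0^{(s)}$, while $V_K = \sum_{\ell \geq 0} a_\ell^{(s)} Z(d, \ell)$, with convergence of this series guaranteed by $s > d/2$ (the same Sobolev embedding condition underlying the rest of the paper). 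Hence $V_K - \overline{K} = \sum_{\ell \geq 1} a_\ell^{(s)} Z(d, \ell) > 0$ strictly, and the rate $N^{-1/2}$ itself is just the classical Monte Carlo root-mean-squared rate, which drops out of the independence computation essentially for free.
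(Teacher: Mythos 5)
Your proposal is correct and follows essentially the same route as the paper: the paper derives the result from its Proposition~\ref{prop:wce} (the RKHS worst-case error formula) together with Theorem~\ref{thm:average}, whose proof is exactly your diagonal/off-diagonal split plus independence, yielding $\mathbb{E}[\wce^2] = \mathcal{K}^{(s)}(1)/N$ with $\mathcal{K}^{(s)}(1) = \sum_{\ell \geq 1} a_\ell^{(s)} Z(d,\ell) = V_K - \overline{K} > 0$, matching your constant. The only cosmetic difference is that the paper first proves the identity for truncated kernels and passes to the limit by monotone convergence, whereas you interchange expectation and summation directly (which is justified here since $s > d/2$ makes the kernel series absolutely and uniformly convergent).
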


Theorem \ref{thm:better} tells us that randomly chosen point sets give a slower rate of convergence than $N^{-s/d}$ for all $s>d/2$, and hence do not form QMC designs. (See Section \ref{sec:better.than.average} for a more complete discussion.) 
However, if we compartmentalize the random point selection process with respect to a partition of the sphere into $N$ equal area regions with small diameter, then we do get an average worst-case error rate appropriate to QMC designs for $s \in (d/2, d/2+1)$, see Theorem~\ref{thm:equal.area}. On the other hand, such randomized equal area point configurations will, on average, not form a sequence of QMC designs for $s > d/2 + 1$, see Theorem~\ref{thm:equal.area.lower.bound}.  

We shall also discuss ``low-discrepancy sequences'' on the sphere and estimates for their worst-case error when used for QMC rules. It turns out that the point sets of such a sequence almost satisfy the QMC design property for $s \in (d/2, (d + 1)/ 2)$, except for a power of $\log N$.

\ 

The structure of the paper is as follows: The next section provides background for spherical designs and for Sobolev spaces $\mathbb{H}^s( \mathbb{S}^d)$ and their associated reproducing kernels. 
Section~\ref{sec:wce} characterizes the worst-case error for equal weight numerical integration and states two main results: Theorems~\ref{thm:minimizers} and \ref{thm:characterization}.
Examples of Sobolev spaces and associated kernels are given in Sections~\ref{sec:Cui.Freeden.kernel} and \ref{sec:generalized.distance.kernel}, with particular emphasis on configurations maximizing sums of generalized distances.
Section~\ref{sec:uniform.distribution} concerns low-discrepancy sequences on the sphere and their quadrature properties. 
In Section~\ref{sec:better.than.average} we analyze the quadrature error for randomly chosen points on the sphere and, in Section~\ref{sec:numerics}, we provide numerical results for worst-case errors and quadrature errors for certain familiar sequences of configurations.
Most of the formal proofs are given in Section~\ref{sec:proofs}.

\section{Background} \label{sec:prelim}

\subsection{Spherical designs}\label{sec:sd}
In the literature on spherical designs, and again in this paper, the relation between $N$ and $t$ in \eqref{eq:spherical.designs} plays an important role. It is known (Seymour and Zaslavsky~\cite{SeZa1984}), that a spherical $t$-design always exists if $N$ is sufficiently large  but that result says nothing about the size of $N$. In the important paper \cite{DeGoSei1977} lower bounds of exact order $t^d$ were established, precisely
\begin{equation}
\label{eq:DGS-bound}
N \geq
\begin{cases}
\displaystyle \binom{d+t/2}{d}+\binom{d+t/2-1}{d} & \text{for $t$ even,} \\[2ex]
\displaystyle 2\binom{d+\lfloor t/2\rfloor}{d} & \text{for $t$ odd,}
\end{cases}
\end{equation}
but it is known, see Bannai and Damerell~\cite{BaDa1979, BaDa1980}, that these lower bounds can be achieved only for a few small values of $t$.
Korevaar and Meyers~\cite{KoMe1993} conjectured that there always exist spherical $t$-designs with $N \asymp t^d$ points.
Bondarenko, Radchenko and Viazovska~\cite{BoRaVi2011arXiv} claim to have resolved this long-standing open problem, by establishing the following result.
\begin{thm} \label{thm:bondarenko.et.al}
For $d \geq 2$, there exists a constant $c_d$ depending only on $d$ such that for every $N \geq c_d \, t^d$ there exists a spherical $t$-design on $\mathbb{S}^d$ with $N$ points.
\end{thm}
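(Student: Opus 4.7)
The plan is to reproduce, in outline, the argument of Bondarenko, Radchenko and Viazovska. The strategy is to combine a fine equal-area partition of $\mathbb{S}^d$ with a topological existence argument (Brouwer degree) applied to the polynomial integration map.

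First I would fix $N \geq c_d\, t^d$ with $c_d$ to be determined later, and invoke a standard construction (e.g.\ Feige--Schechtman, or the Kuijlaars--Saff/Leopardi construction) to obtain an area-regular partition $\{R_j\}_{j=1}^N$ of $\mathbb{S}^d$ satisfying $\sigma_d(R_j)=1/N$ and $\diam(R_j)\le A_d\, N^{-1/d}$. Let $V_t$ denote the orthogonal complement of the constants inside the space of spherical polynomials of degree $\le t$, choose an $L^2$-orthonormal basis $\{Y_k\}_{k=1}^M$ of $V_t$ (so $M\asymp t^d$), and consider the continuous map
\begin{equation*}
\Phi\colon R_1\times\dots\times R_N \to \mathbb{R}^M, \qquad \Phi(\mathbf{x}_1,\dots,\mathbf{x}_N)_k \DEF \frac{1}{N}\sum_{j=1}^N Y_k(\mathbf{x}_j).
\end{equation*}
An $N$-point spherical $t$-design with exactly one point in each cell $R_j$ is precisely a zero of $\Phi$, so the theorem reduces to showing $0\in\Phi(D)$, where $D\DEF R_1\times\dots\times R_N$.

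Next I would assemble two quantitative inputs. On the one hand, the Bernstein--Markov inequality $\|\nabla Y_k\|_\infty\le C_d\, t\,\|Y_k\|_\infty$, together with the diameter bound for the cells, gives $|\Phi(\mathbf{x})-\Phi(\mathbf{z}^0)|\le B_d\, t\, N^{-1/d}\FED \delta$ for every $\mathbf{x}\in D$, where $\mathbf{z}^0$ collects one reference point from each $R_j$. On the other hand, starting from the Mhaskar--Narcowich--Ward positive-weight cubature (which gives polynomial exactness up to degree $t$ on $\asymp t^d$ nodes with \emph{unequal} positive weights), one constructs a local parametrized family of configurations in $D$ whose image under $\Phi$ covers a Euclidean ball of radius $2\delta$ around $\Phi(\mathbf{z}^0)$. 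A Brouwer fixed-point / topological degree computation then forces $0\in\Phi(D)$, provided $c_d$ is chosen large enough to absorb all the constants above.

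The main obstacle, and the heart of the BRV paper, lies in the second quantitative input. One must show that the $Nd$ positional degrees of freedom produce enough leverage in $\mathbb{R}^M$ to beat the defect $\delta$ at $\mathbf{z}^0$, uniformly in $t$. This is achieved by combining a Marcinkiewicz--Zygmund-type inequality (which bounds from below the relevant singular values of the Jacobian of $\Phi$, after averaging over cells) with a continuation argument that deforms the MNW weights to the constant weight $1/N$ while tracking the topological degree of the associated map on $D$. Once these two ingredients are in place, the existence of an $N$-point spherical $t$-design follows from a standard Brouwer degree argument. The constant $c_d$ produced in this way depends only on the MZ constants and on the Bernstein--Markov constant, and is therefore indeed a function of $d$ alone.
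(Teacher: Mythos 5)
You should first note that the paper does not prove this statement at all: Theorem~\ref{thm:bondarenko.et.al} is imported verbatim from Bondarenko, Radchenko and Viazovska \cite{BoRaVi2011arXiv} (the authors even write that BRV ``claim to have resolved'' the Korevaar--Meyers conjecture), so there is no internal proof to compare your attempt against. Your proposal is therefore judged as a standalone argument, and as such it has a genuine gap. The reduction to showing $0\in\Phi(D)$ for the map $\Phi$ on $D=R_1\times\dots\times R_N$ is fine, and the estimate $|\Phi(\mathbf{x})-\Phi(\mathbf{z}^0)|\le B_d\,t\,N^{-1/d}\FED\delta$ via Bernstein--Markov and the cell diameters is routine. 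But everything then hinges on the ``second quantitative input'': that one can produce a parametrized family of configurations inside $D$ whose $\Phi$-image covers a ball of radius $2\delta$ about $\Phi(\mathbf{z}^0)$. That surjectivity-onto-a-ball claim \emph{is} the theorem; asserting that it follows from ``a Marcinkiewicz--Zygmund-type inequality'' plus ``a continuation argument'' names the difficulty without resolving it. In particular, a lower bound on singular values of the Jacobian of $\Phi$ only gives local openness near a single configuration, and one must still show that the image of the whole product of cells (each of diameter only $\asymp N^{-1/d}$) is large enough in all $M\asymp t^d$ directions simultaneously, uniformly in $t$; this is exactly where the BRV paper does its real work.

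Moreover, the mechanism you sketch is not the one that actually works in \cite{BoRaVi2011arXiv}. BRV do not deform Mhaskar--Narcowich--Ward weights to the constant weight $1/N$; they keep equal weights throughout, place one point in each cell of an area-regular partition, and for each candidate polynomial $P$ in the sphere of radius $r$ of $V_t$ they flow the points along (a suitable normalization of) $\nabla P$ for a fixed time, staying inside slightly enlarged cells. They then apply a Brouwer-degree lemma to the resulting map $f\colon V_t\to V_t$, $f(P)\DEF$ the error functional of the flowed configuration, verifying $\langle P,f(P)\rangle>0$ on $\|P\|=r$; this positivity is what the area-regularity and the $N\ge c_d t^d$ hypothesis buy. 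Your proposed weight-deformation/continuation would need its own argument that the topological degree is well defined, nonzero, and preserved along the homotopy while the configuration stays in $D$ --- none of which is supplied. So the outline correctly identifies the shape of the argument but leaves its essential content unproved.
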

Supporting evidence is provided in \cite{ChFrLa2011}, which used interval analysis to establish rigorously the existence of spherical designs  on $\mathbb{S}^2$ with $N=(t+1)^2$ for all values of $t\leq 100$.

\subsection{Spherical harmonics}\label{subsec:sphere}
Recall that $\sigma_d$ denotes the normalized (Hausdorff) surface measure on the unit sphere $\mathbb{S}^d$ in $\mathbb{R}^{d+1}$. 
The [non-normalized] surface area of $\mathbb{S}^d$ is denoted by $\omega_d$. For future reference, we record the following facts:
\begin{equation} \label{eq:omega.d.ratio}
\gamma_d \DEF \frac{1}{d} \frac{\omega_{d-1}}{\omega_{d}}, \qquad \frac{\omega_{d-1}}{\omega_{d}} = \frac{\Gamma((d+1)/2)}{\sqrt{\pi} \Gamma(d/2)} \sim \frac{d^{1/2}}{\sqrt{2 \pi}} \quad \text{as $d \to \infty$.} 
\end{equation}
Here, $\Gamma(z)$ denotes the gamma function and $f(x) \sim g(x)$ as $x \to c$ means $f(x) / g(x) \to 1$ as $x \to c$. The asymptotic relation in \eqref{eq:omega.d.ratio} follows from (see \cite[Eq.~5.11.12]{DLMF2010.05.07})
\begin{equation} \label{eq:gamma.ratio}
\frac{\gammafcn(z + a)}{\gammafcn(z + b)} \sim z^{a-b} \qquad \text{as $z \to \infty$ in the sector $| \arg z | \leq \pi - \delta < \pi$.}
\end{equation}
We make use of the rising factorial, that is the {\em Pochhammer symbol} defined by
\begin{equation} \label{eq:Pochhammer.symb}
\Pochhsymb{a}{0} = 1, \qquad \Pochhsymb{a}{n+1} = \Pochhsymb{a}{n} (n + a), \quad n = 0, 1, \dots,
\end{equation}
which can be written in terms of the gamma function by means of $\Pochhsymb{a}{n} = \gammafcn(n+a) / \gammafcn(a)$.

We denote, as usual, by $\{ Y_{\ell,k} : k = 1, \dots, Z(d, \ell) \}$ a collection of $\mathbb{L}_2$-orthonormal
real spherical harmonics (homogeneous harmonic polynomials in $d+1$ variables restricted to $\mathbb{S}^d$) of exact degree $\ell$, where
\begin{equation} \label{eq:Z.d.n}
Z(d,n) = \left( 2n + d - 1 \right) \frac{\gammafcn(n+d-1)}{\gammafcn(d)\,\gammafcn(n+1)} \sim \frac{2}{\gammafcn(d)} \, n^{d-1} \quad \text{as $n \to \infty$}.
\end{equation}
It is well-known that the $Y_{\ell,k}$ satisfy the following identity known as the addition theorem:
\begin{equation} \label{eq:addition.theorem}
\sum_{k=1}^{Z(d,\ell)} Y_{\ell,k}( \PT{x} ) Y_{\ell,k}( \PT{y} ) = Z(d,\ell) \, P_\ell^{(d)}(\PT{x} \cdot \PT{y}), \qquad \PT{x}, \PT{y} \in \mathbb{S}^d,
\end{equation}
where $P_\ell^{(d)}$  is the normalized Gegenbauer (or Legendre) polynomial, orthogonal on the interval $[-1,1]$ with respect to the weight function $(1-t^2)^{d/2-1}$, and normalized by $P_\ell^{(d)}(1) = 1$.
Each spherical harmonic $Y_{\ell,k}$ of exact degree $\ell$ is an eigenfunction of the negative Laplace-Beltrami operator $-\Delta_d^*$ for $\mathbb{S}^d$ with eigenvalue
\begin{equation} \label{eq:eigenvalue}
\lambda_\ell \DEF \ell \left( \ell + d - 1 \right).
\end{equation}
(For further details see, e.g., \cite{Mu1966}.)

The family $\{ Y_{\ell,k} : k = 1, \dots, Z(d,\ell); \ell = 0, 1, \dots \}$ forms a complete orthonormal (with respect to $\sigma_d$) system for the Hilbert space $\mathbb{L}_2(\mathbb{S}^{d})$ of square-integrable functions on $\mathbb{S}^d$ endowed with the usual inner product and induced norm
\begin{equation*}
( f, g )_{\mathbb{L}_2(\mathbb{S}^{d})} \DEF \int_{\mathbb{S}^d} f( \PT{x} ) g( \PT{x} ) \dd \sigma_d( \PT{x} ), \qquad \| f \|_{\mathbb{L}_2(\mathbb{S}^{d})} \DEF \sqrt{( f, f )_{\mathbb{L}_2(\mathbb{S}^{d})}}.
\end{equation*}

We shall denote by $\mathbb{P}_t(\Sp^d)$ the space of all spherical polynomials of degree $\leq t$ (that is, the restriction to $\mathbb{S}^d$ of all polynomials in $d+1$ real variables of degree $\leq t$). The space $\mathbb{P}_t(\mathbb{S}^d)$ coincides with the span of all spherical harmonics up to (and including) degree $t$, and its dimension is $Z(d+1,t)$.

We make frequent use of the following simple application of the addition theorem.

\begin{lem} \label{lem:positive.definiteness}
Let $d\geq 2$. For all integers $\ell \geq 0$ and all choices of points $\PT{x}_1, \dots, \PT{x}_N \in \mathbb{S}^d$ there holds
\begin{equation} \label{eq:positive.definiteness}
\Phi_\ell( \PT{x}_1, \dots, \PT{x}_N ) \DEF \frac{1}{N^2} \sum_{j=1}^N \sum_{i=1}^N Z( d, \ell ) \, P_\ell^{(d)}( \PT{x}_j \cdot \PT{x}_i ) = \sum_{k=1}^{Z( d, \ell )} \left| \frac{1}{N} \sum_{j=1}^N Y_{\ell,k} ( \PT{x}_j ) \right|^2 \geq 0.
\end{equation}
\end{lem}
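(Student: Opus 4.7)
The plan is to apply the addition theorem \eqref{eq:addition.theorem} directly and then rearrange finite sums. Concretely, I would substitute
$$Z(d,\ell)\,P_\ell^{(d)}(\PT{x}_j\cdot\PT{x}_i)=\sum_{k=1}^{Z(d,\ell)}Y_{\ell,k}(\PT{x}_j)\,Y_{\ell,k}(\PT{x}_i)$$
into the defining double sum for $\Phi_\ell(\PT{x}_1,\dots,\PT{x}_N)$. This turns the double sum into a triple sum over $(i,j,k)$, all finite.

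Next I would swap the order of summation so that the $k$-sum is outermost, and observe that the resulting inner double sum factors:
$$\frac{1}{N^2}\sum_{j=1}^{N}\sum_{i=1}^{N}Y_{\ell,k}(\PT{x}_j)\,Y_{\ell,k}(\PT{x}_i)=\left(\frac{1}{N}\sum_{j=1}^{N}Y_{\ell,k}(\PT{x}_j)\right)\!\left(\frac{1}{N}\sum_{i=1}^{N}Y_{\ell,k}(\PT{x}_i)\right).$$
Since the collection $\{Y_{\ell,k}\}$ was specified to be \emph{real} spherical harmonics, this product is the square of a real number, and hence equals $\bigl|\tfrac{1}{N}\sum_{j=1}^{N}Y_{\ell,k}(\PT{x}_j)\bigr|^{2}$. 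Summing over $k$ gives the middle expression in \eqref{eq:positive.definiteness}.

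Non-negativity of the right-hand side is then immediate, being a sum of squares of real numbers. There is essentially no obstacle here: the lemma is a one-line consequence of the addition theorem together with the elementary identity expressing a double sum of a tensor product as the square of a single sum. The only subtlety worth flagging explicitly is the use of the \emph{real}-valued convention for $Y_{\ell,k}$, which is what allows the product of identical sums to be written directly as a squared modulus without any complex conjugation.
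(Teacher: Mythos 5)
Your argument is correct and is precisely the "simple application of the addition theorem" the paper has in mind (the paper states the lemma without writing out a proof, treating it as immediate): substitute \eqref{eq:addition.theorem}, interchange the finite sums, factor the double sum into the product of two identical single sums, and use the real-valuedness of the $Y_{\ell,k}$ to recognize this as a sum of squares. Nothing is missing.
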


\subsection{Sobolev spaces}\label{subsec:sobolev}

The Sobolev space $\mathbb{H}^s(\mathbb{S}^d)$ may be defined for $s\ge 0$ as the set of all functions $f\in \mathbb{L}_2(\mathbb{S}^d)$ whose Laplace-Fourier coefficients
\begin{equation} \label{eq:L-F.coeff}
\widehat{f}_{\ell,k} := ( f, Y_{\ell,k} )_{\mathbb{L}_2(\mathbb{S}^{d})} = \int_{\mathbb{S}^d} f( \PT{x} ) Y_{\ell,k}( \PT{x} ) \dd \sigma_d( \PT{x} )
\end{equation}
satisfy
\begin{equation}\label{eq:sobcond}
\sum_{\ell=0}^\infty \sum_{k=1}^{Z(d,\ell)} \left( 1 + \lambda_\ell \right)^{s} \left| \widehat{f}_{\ell,k} \right|^2 <\infty,
\end{equation}
where the $\lambda_\ell$'s are given in \eqref{eq:eigenvalue}.
On setting $s=0$ we recover $\mathbb{H}^0(\mathbb{S}^d)=\mathbb{L}_2(\mathbb{S}^d).$

The norm in  $\mathbb{H}^s(\mathbb{S}^d)$ may of course be defined as the square root of the expression on the left-hand side of the last inequality.
In this paper we shall, however, take advantage of the freedom to define equivalent Sobolev space norms. Let $s > d/2$ be fixed and suppose we are given a sequence of positive real numbers $(a_\ell^{(s)})_{\ell \geq 0}$  satisfying
\begin{equation} \label{eq:sequenceassumption}
a_\ell^{(s)} \asymp \left( 1 + \lambda_\ell \right)^{-s} \asymp \left( 1 + \ell \right)^{-2s}. 
\end{equation}
Then we can define a norm in $\mathbb{H}^s(\mathbb{S}^d)$ by
\begin{equation}\label{eq:sobnorm}
\|f\|_{\mathbb{H}^s} \DEF \left[\sum_{\ell=0}^\infty \sum_{k=1}^{Z(d,\ell)} \frac{1}{a_\ell^{(s)}} \left|\widehat{f}_{\ell,k}\right|^2\right]^{1/2}.
\end{equation}
The norm therefore depends on the particular choice of the sequence $(a_\ell^{(s)})_{\ell \geq 0}$, but for notational simplicity we shall generally not show this dependence explicitly.
Clearly, Definitions~\ref{def:specific_approx.sph.design} and \ref{def:approx.sph.design} are not tied to a particular Sobolev norm $\| \cdot \|_{\mathbb{H}^s}$, since a change to an {\em equivalent} Sobolev norm merely leads to a change of the constant $c(s,d)$. The corresponding inner product in the Sobolev space is
\begin{equation}\label{inner}
( f, g )_{\mathbb{H}^s} \DEF \sum_{\ell=0}^\infty \sum_{k=1}^{Z(d,\ell)} \frac{1}{a_\ell^{(s)}} \widehat{f}_{\ell,k} \, \widehat{g}_{\ell,k}.
\end{equation}

It is well known that $\mathbb{H}^{s}(\mathbb{S}^d) \subset \mathbb{H}^{s^\prime}(\mathbb{S}^d)$ whenever $s > s^\prime$, and that $\mathbb{H}^{s}(\mathbb{S}^d)$ is embedded in the space of $k$-times continuously differentiable functions $C^k(\mathbb{S}^d)$ if $s > k + d / 2$ (e.g. \cite{He2006}).

\subsection{Sobolev spaces as reproducing kernel Hilbert spaces} \label{subsec:H.as.RKHS}
Since the point-evaluation functional is bounded in $\mathbb{H}^s(\mathbb{S}^d)$ whenever $s>d/2$, the Riesz representation theorem assures the existence of a reproducing kernel $K^{(s)}(\PT{x},\PT{y})$, which can be written as
\begin{equation}\label{eq:K}
K^{(s)}(\PT{x},\PT{y}) = \sum_{\ell=0}^\infty a_\ell^{(s)} Z(d,\ell) P_\ell^{(d)}(\PT{x}\cdot\PT{y})=\sum_{\ell=0}^\infty \sum_{k=1}^{Z(d,\ell)}a_\ell^{(s)}Y_{\ell,k}(\PT{x})Y_{\ell,k}(\PT{y}),
\end{equation}
where the positive coefficients $a_\ell^{(s)}$ satisfy \eqref{eq:sequenceassumption}. 
It is easily verified that the above expression has the reproducing kernel properties
\begin{equation} \label{eq:repr.kernel.prop}
K^{(s)}({\PT \cdot},{\PT x}) \in \mathbb{H}^s(\mathbb{S}^d), \quad \PT{x} \in \mathbb{S}^d, \qquad ( f, K^{(s)}({\PT \cdot},{\PT x}) )_{\mathbb{H}^s} = f({\PT x}), \quad \PT{x} \in \mathbb{S}^d, f \in \mathbb{H}^s(\mathbb{S}^d).
\end{equation}
The kernel is a {\em zonal} function; that is $K^{(s)}(\PT{x},\PT{y})$ depends only on the inner product $\PT{x}\cdot\PT{y}$.  We write for simplicity $K^{(s)}( \PT{x} \cdot \PT{y} ) \DEF K^{(s)}( \PT{x}, \PT{y} )$.
For the particular choice $a_\ell^{(s)} = ( 1 + \lambda_\ell )^{-s}$, we use the notation 
\begin{equation}\label{eq:K.can}
K_{\mathrm{can}}^{(s)}(\PT{x},\PT{y}) \DEF \sum_{\ell=0}^\infty ( 1 + \lambda_\ell )^{-s} Z(d,\ell) P_\ell^{(d)}(\PT{x}\cdot\PT{y}),
\end{equation}
which we call the \emph{canonical kernel} for $\mathbb{H}^{s}( \mathbb{S}^d )$.

Sections~\ref{sec:Cui.Freeden.kernel} and \ref{sec:generalized.distance.kernel} contain explicit examples of Sobolev spaces and associated kernels.

\section{Numerical integration and worst-case error}
\label{sec:wce}

\subsection{Worst-case error}

Our results are based on an explicit expression for the ``worst-case error'' that occurs on the left-hand side of \eqref{eq:approxfors}:
\begin{defn} \label{def:wce}
For a Banach space $B$ of continuous functions on $\mathbb{S}^d$ with norm $\|\cdot\|_B$, the {\em worst-case error} for the integration rule $\numint[X_N]$ with node set $X_N = \{ \PT{x}_1,\ldots,\PT{x}_N \}$ approximating the integral $\xctint(f)$, with $\numint[X_N](f)$ and $\xctint(f)$ defined by
\begin{equation} \label{eq:QN}
\numint[X_N](f) \DEF \frac{1}{N} \sum_{j=1}^N f(\PT{x}_j), \qquad \xctint(f) \DEF \int_{\mathbb{S}^d} f( \PT{x} ) \dd \sigma_d( \PT{x} ),
\end{equation}
is given by
\begin{equation}\label{eq:wceB}
\wce(\numint[X_N]; B ) \DEF \sup \left\{ \big| \numint[X_N](f) - \xctint(f) \big| : f \in B, \| f \|_B \leq 1 \right\}.
\end{equation}
\end{defn}

As a trivial consequence of the definition we have the following error bound for an arbitrary function  $f\in B$:
\begin{equation} \label{eq:Koksma.like}
\left| \numint[X_N](f) - I(f) \right| \leq \wce(\numint[X_N]; B ) \left\|f \right\|_B.
\end{equation}
Because of the similarity of \eqref{eq:Koksma.like} to the celebrated Koksma-Hlawka inequality, which involves the ``star-discrepancy'' of the node set, 
the worst-case error is sometimes referred to as a {\em generalized discrepancy}, see for example \cite{CuFr1997}.  In this paper, however, we shall generally reserve the word ``discrepancy'' for quantities that have a geometric interpretation.

\subsection{Worst-case error in a reproducing kernel Hilbert space} \label{subsec:wce.rkhs}

For most reproducing kernel Hilbert spaces there is a computable expression for the worst-case error, as shown by the following standard argument.  With $K$  the kernel of a reproducing kernel Hilbert space $H$ with inner product $(\cdot,\cdot)_H$,
the reproducing kernel property $f(\PT{x})=(f,K(\cdot,\PT{x}))_H$ allows us to write
\begin{equation*}
\numint[X_N](f) - \xctint(f) = ( f, \mathcal{R}[X_N] )_H, \qquad f \in H,
\end{equation*}
where $\mathcal{R}[X_N] \in H$ is the ``representer'' of the error, given by
\begin{equation*}
\mathcal{R}[X_N]( \PT{x} ) \DEF \frac{1}{N} \sum_{j = 1}^{N} \, K(\PT{x}, \PT{x}_j) - I_{\PT{y}} K(\PT{x},\cdot) ,
\end{equation*}
assuming that the integration functional $f \mapsto \xctint(f)$ is bounded on $H$.
Here $I_{\PT{y}} K$ means the integral functional $I$ applied to the second variable in $K$ (and later $I_{\PT{x}} K$ will mean the integral functional applied to the first variable).\footnote{The norm $\| \mathcal{R}[X_N] \|_H$ is also known as the \emph{$g$-diaphony of $X_N$} with $g = I_{\PT{y}} K(\PT{x},\cdot)$; see \cite{AmZi2001}.}

It follows that
\begin{align*}
\left[ \wce( \numint[X_{N}]; H ) \right]^2
&\DEF \Big[ \sup \left\{ \big| \numint[X_N](f) - \xctint(f) \big| : f \in H, \| f \|_H \leq 1 \right\} \Big]^2 \\
&= \Big[ \sup \left\{ \big| ( f, \mathcal{R}[X_N] )_H \big| : f \in H, \| f \|_H \leq 1 \right\} \Big]^2 \\
&= \left\| \mathcal{R}[X_N] \right\|_H^2 = \left( \mathcal{R}[X_N], \mathcal{R}[X_N] \right)_H \\
&= \frac{1}{N^2}\sum_{j=1}^{N} \sum_{i=1}^{N} \, K({\PT x}_{j},{\PT x}_{i}) - \frac{2}{N} \sum_{j=1}^{N} I_{\PT{y}} K({\PT x}_{j},\cdot) + I_{\PT{x}} I_{\PT{y}} K(\cdot,\cdot).
\end{align*}

\subsection{Worst-case error in $\mathbb{H}^s( \mathbb{S}^d )$} 

Now consider the special case of the reproducing kernel Hilbert space $\mathbb{H}^s( \mathbb{S}^d )$ with $s>d/2$, and with reproducing kernel given by \eqref{eq:K}.
For this case it is easily seen that
\begin{equation*}
I_{\PT{y}} K^{(s)}(\PT{x},\cdot) = a_0^{(s)},
\end{equation*}
from which it follows that
\begin{equation} \label{eq:wce2}
\left[ \wce( \numint[X_{N}]; \mathbb{H}^s( \mathbb{S}^d ) ) \right]^2 = \left[ \frac{1}{N^2}\sum_{j=1}^{N} \sum_{i=1}^{N} \, K^{(s)}({\PT x}_j,{\PT x}_i) \right] - a_0^{(s)} = \frac{1}{N^2}\sum_{j=1}^{N} \sum_{i=1}^{N} \, \mathcal{K}^{(s)}({\PT x}_j \cdot {\PT x}_i),
\end{equation}
where $\mathcal{K}^{(s)}:[-1,1]\to \mathbb{R}$ is defined  by
\begin{equation}\label{calK}
\mathcal{K}^{(s)}(\PT{x} \cdot \PT{y}) \DEF \sum_{\ell=1}^\infty a_\ell^{(s)} Z(d,\ell) P_\ell^{(d)}(\PT{x} \cdot \PT{y}). 
\end{equation}
The use of the calligraphic symbol here and for subsequent kernels indicates that the sum runs from $\ell=1$ rather than $\ell=0$. 
Note that the kernels depend on $s$ through the sequence $(a_\ell^{(s)})_{\ell\ge 0}$. 

We summarize these observations in the following proposition.

\begin{prop} \label{prop:wce}
For $s>d/2$, let $\mathbb{H}^s(\mathbb{S}^d)$ be the Hilbert space with norm \eqref{eq:sobnorm}, where the sequence $(a_\ell^{(s)})_{\ell\ge0}$ satisfies \eqref{eq:sequenceassumption}, and let $\mathcal{K}^{(s)}$ be given by \eqref{calK}.  Then, for a rule $\numint[X_N]$ with node set $X_N = \{ \PT{x}_1, \dots, \PT{x}_N \} \subset \mathbb{S}^d$,
\begin{equation}\label{eq:earlywce}
\begin{split}
\wce( \numint[X_N]; \mathbb{H}^s(\mathbb{S}^d) ) 
&= \left( \frac{1}{N^2} \sum_{j=1}^N \sum_{i=1}^N \, \mathcal{K}^{(s)}({\PT x}_j \cdot {\PT x}_i ) \right)^{1/2} \\
&= \left(\sum_{\ell=1}^\infty \sum_{k=1}^{Z(d,\ell)}a_\ell^{(s)} \left|\frac{1}{N}\sum_{j=1}^N Y_{\ell,k}({\PT x}_j)\right|^2\right)^{1/2}.
\end{split}
\end{equation}
\end{prop}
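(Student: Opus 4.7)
The proof is essentially a specialization of the RKHS worst-case error identity already derived in Subsection 3.2 of the excerpt, so my plan is to assemble the pieces rather than develop any new ideas. Starting from
\[
\left[ \wce( \numint[X_{N}]; \mathbb{H}^s(\mathbb{S}^d) ) \right]^2 = \frac{1}{N^2}\sum_{j=1}^{N} \sum_{i=1}^{N} K^{(s)}({\PT x}_{j},{\PT x}_{i}) - \frac{2}{N} \sum_{j=1}^{N} I_{\PT{y}} K^{(s)}({\PT x}_{j},\cdot) + I_{\PT{x}} I_{\PT{y}} K^{(s)}(\cdot,\cdot),
\]
the first step is to evaluate the two integral terms for the explicit kernel \eqref{eq:K}. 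Since $Y_{0,1}$ is constant and all higher spherical harmonics are orthogonal to it, $\int_{\mathbb{S}^d} Y_{\ell,k}(\PT{y})\,\dd\sigma_d(\PT{y}) = 0$ for every $\ell \geq 1$, so termwise integration of the spherical-harmonic expansion in \eqref{eq:K} leaves only the $\ell=0$ contribution, giving $I_{\PT{y}} K^{(s)}(\PT{x},\cdot) = a_0^{(s)}$ independently of $\PT{x}$, and the same value for $I_{\PT{x}} I_{\PT{y}} K^{(s)}(\cdot,\cdot)$. (Uniform convergence needed to justify termwise integration follows from $s>d/2$ together with the decay \eqref{eq:sequenceassumption}; this is a standard Weierstrass M-test argument using the bound $|P_\ell^{(d)}| \le 1$ and the growth \eqref{eq:Z.d.n} of $Z(d,\ell)$.)

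Plugging these values into the displayed identity and collecting terms, the $\ell=0$ summand $a_0^{(s)} Z(d,0) P_0^{(d)}(\PT{x}_j \cdot \PT{x}_i) = a_0^{(s)}$ in the double sum is exactly cancelled by $-2 a_0^{(s)} + a_0^{(s)}$, leaving the double sum restricted to $\ell \ge 1$. This produces
\[
\left[ \wce( \numint[X_N]; \mathbb{H}^s(\mathbb{S}^d) ) \right]^2 = \frac{1}{N^2} \sum_{j=1}^N \sum_{i=1}^N \mathcal{K}^{(s)}(\PT{x}_j \cdot \PT{x}_i),
\]
with $\mathcal{K}^{(s)}$ as defined in \eqref{calK}, which yields the first equality after taking square roots.

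For the second equality I would apply the addition theorem \eqref{eq:addition.theorem} in the form used in Lemma \ref{lem:positive.definiteness}: for each $\ell \geq 1$,
\[
\frac{1}{N^2}\sum_{j=1}^{N}\sum_{i=1}^{N} Z(d,\ell)\, P_\ell^{(d)}(\PT{x}_j \cdot \PT{x}_i) = \sum_{k=1}^{Z(d,\ell)} \left| \frac{1}{N}\sum_{j=1}^N Y_{\ell,k}(\PT{x}_j) \right|^2.
\]
Interchanging the sum over $\ell$ with the double sum over $j,i$ (justified by absolute convergence from \eqref{eq:sequenceassumption} and the bound $|P_\ell^{(d)}| \le 1$) and inserting this identity termwise then gives the second expression and completes the proof.

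There is no genuine obstacle here: the only subtleties are the routine convergence/interchange justifications flagged above, and verifying that $I_{\PT{y}} K^{(s)}(\PT{x},\cdot) = a_0^{(s)}$, which is immediate once termwise integration is allowed. The proposition is really a bookkeeping statement that packages the general RKHS identity together with the addition theorem in the spherical setting.
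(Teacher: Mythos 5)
Your proposal is correct and follows essentially the same route as the paper: the authors also specialize the general RKHS worst-case error identity from Subsection 3.2, observe that $I_{\PT{y}} K^{(s)}(\PT{x},\cdot) = a_0^{(s)}$ so that the constant term cancels and leaves the kernel $\mathcal{K}^{(s)}$ starting at $\ell = 1$, and obtain the second expression from the addition theorem as packaged in Lemma~\ref{lem:positive.definiteness}. Your added remarks on termwise integration and interchange of summation are routine justifications the paper leaves implicit.
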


From the first expression in \eqref{eq:earlywce}, the squared worst-case error for the rule $\numint[X_N]$ is the normalized $\mathcal{K}^{(s)}$-energy functional evaluated at the node set $X_N$. This expression can be computationally useful when the kernel $\mathcal{K}^{(s)}$ is available in closed form.

By comparison with a sequence satisfying Theorem~\ref{thm:derrived.gen.sequ}, we deduce that the minimizers of \eqref{eq:earlywce} yield a sequence of QMC designs for $\mathbb{H}^s( \mathbb{S}^d )$.

\begin{thm} \label{thm:minimizers}
Under the assumptions of Proposition~\ref{prop:wce}, if 
$X_N^*$, $N = 2, 3, 4, \dots$, minimizes the energy functional
\begin{equation*}
\sum_{j=1}^N \sum_{i=1}^N \, \mathcal{K}^{(s)}({\PT x}_j \cdot {\PT x}_i ),
\end{equation*}
then there exists $c(s,d) > 0$ depending on the $\mathbb{H}^s( \mathbb{S}^d )$-norm such that for all $N \geq 2$
\begin{equation*} 
\wce( \numint[X_N^*]; \mathbb{H}^s( \mathbb{S}^d ) )
\leq \frac{c(s,d)}{N^{s/d}}.
\end{equation*}
Consequently, $(X_N^*)$ is a sequence of QMC designs for $\mathbb{H}^s( \mathbb{S}^d )$.
\end{thm}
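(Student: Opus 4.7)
The plan is to combine Proposition~\ref{prop:wce} with Theorem~\ref{thm:derrived.gen.sequ} in essentially one step, using a comparison argument. The whole point of the energy reformulation is that minimizing the $\mathcal{K}^{(s)}$-energy is identical to minimizing the squared worst-case error, so any configuration we can exhibit whose energy is small gives an upper bound on the energy (hence on the worst-case error) of $X_N^*$.

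More precisely, I would proceed as follows. First, apply Proposition~\ref{prop:wce} to rewrite
\begin{equation*}
\big[\wce(\numint[X_N]; \mathbb{H}^s(\mathbb{S}^d))\big]^2 = \frac{1}{N^2}\sum_{j=1}^{N}\sum_{i=1}^{N} \mathcal{K}^{(s)}(\PT{x}_j \cdot \PT{x}_i)
\end{equation*}
for every $N$-point configuration $X_N \subset \mathbb{S}^d$. Because this identity is a rigid identification (not merely an inequality), a configuration $X_N^*$ minimizes the $\mathcal{K}^{(s)}$-energy functional over all $N$-point sets if and only if it minimizes the worst-case error over all $N$-point rules $\numint[X_N]$. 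Hence for each $N \ge 2$ and for any comparison configuration $Y_N$ we automatically have $\wce(\numint[X_N^*]; \mathbb{H}^s(\mathbb{S}^d)) \le \wce(\numint[Y_N]; \mathbb{H}^s(\mathbb{S}^d))$.

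The second step is to choose a good competitor $Y_N$. Theorem~\ref{thm:derrived.gen.sequ} supplies, for every $N = 1, 2, 3, \dots$, an $N$-point spherical $t$-design $Y_N$ with $t \asymp N^{1/d}$, and asserts that $(Y_N)$ is a sequence of generic QMC designs. In particular, for the fixed $s > d/2$ under consideration, the QMC design inequality \eqref{eq:approxfors} holds for $(Y_N)$ with some constant $c(s,d) > 0$, giving
\begin{equation*}
\wce(\numint[Y_N]; \mathbb{H}^s(\mathbb{S}^d)) \le \frac{c(s,d)}{N^{s/d}}, \qquad N \ge 2.
\end{equation*}
Combining this with the minimization inequality of the first step yields the stated bound for $X_N^*$, and the QMC design property follows directly from Definition~\ref{def:specific_approx.sph.design}.

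There is essentially no obstacle here: the heavy lifting is entirely in Theorem~\ref{thm:derrived.gen.sequ} (ultimately in the Bondarenko–Radchenko–Viazovska existence result) and in the reproducing-kernel identity of Proposition~\ref{prop:wce}. The only thing to be mildly careful about is to note that Theorem~\ref{thm:derrived.gen.sequ} gives spherical designs for \emph{every} $N$ rather than only a subsequence, so the comparison configuration $Y_N$ is available for every $N \ge 2$ at which the minimizer $X_N^*$ is defined, and therefore no gap in $N$ is introduced.
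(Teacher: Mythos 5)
Your proposal is correct and follows essentially the same route as the paper's own proof: Proposition~\ref{prop:wce} identifies the squared worst-case error with the normalized $\mathcal{K}^{(s)}$-energy, minimality of $X_N^*$ gives $\wce(\numint[X_N^*];\mathbb{H}^s(\mathbb{S}^d)) \le \wce(\numint[Y_N];\mathbb{H}^s(\mathbb{S}^d))$ for the spherical-design competitor $Y_N$ from Theorem~\ref{thm:derrived.gen.sequ}, and that theorem supplies the bound $c(s,d)N^{-s/d}$. Your remark that $Y_N$ exists for every $N$ (so no gaps arise) is exactly the point the paper's construction of $(\widehat{Y}_N)$ is designed to secure.
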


The next result says, in essence, that the computed worst-case errors in $\mathbb{H}^s(\mathbb{S}^d )$ of a given sequence of QMC designs for $\mathbb{H}^{s'}(\mathbb{S}^d)$, where $d/2 < s^\prime \leq s$, will always show a rate of decay at least $O(N^{-s^\prime/d})$ and at most $O(N^{-s/d})$. 

\begin{thm} \label{thm:best.rate}
Under the assumptions of Proposition~\ref{prop:wce} for $\mathbb{H}^s( \mathbb{S}^d )$ and $\mathbb{H}^{s^\prime}( \mathbb{S}^d )$, if $(X_N)$ is a sequence of QMC designs for $\mathbb{H}^{s'}(\mathbb{S}^d)$, then 
\begin{equation}
\wce(\numint[X_N]; \mathbb{H}^s(\mathbb{S}^d) )
\leq \frac{c(s,s',d)}{N^{s'/d}}, \qquad d/2 < s^\prime < s,
\end{equation}
where $c(s, s^\prime, d) > 0$ depends on the norms for $\mathbb{H}^s( \mathbb{S}^d )$ and $\mathbb{H}^{s^\prime}( \mathbb{S}^d )$, but is independent of~$N$.
\end{thm}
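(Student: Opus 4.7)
The plan is to deduce the bound from the continuous embedding $\mathbb{H}^s(\mathbb{S}^d) \hookrightarrow \mathbb{H}^{s'}(\mathbb{S}^d)$ for $s \geq s'$, together with the Koksma--Hlawka-type inequality \eqref{eq:Koksma.like}. First I would verify the embedding by a direct comparison of the two norms. From \eqref{eq:sobnorm} and the asymptotics \eqref{eq:sequenceassumption} we have $a_\ell^{(s')}/a_\ell^{(s)} \asymp (1+\ell)^{2(s-s')} \geq 1$ for $s \geq s'$, which, after absorbing the (finitely many) equivalence constants attached to the two chosen norms, yields a constant $C = C(s,s',d) > 0$ with
\begin{equation*}
\| f \|_{\mathbb{H}^{s'}} \leq C \, \| f \|_{\mathbb{H}^s} \qquad \text{for every } f \in \mathbb{H}^s(\mathbb{S}^d) .
\end{equation*}

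Next, for any $f \in \mathbb{H}^s(\mathbb{S}^d)$ with $\|f\|_{\mathbb{H}^s} \leq 1$, applying \eqref{eq:Koksma.like} in the coarser space $\mathbb{H}^{s'}(\mathbb{S}^d)$ gives
\begin{equation*}
\big| \numint[X_N](f) - \xctint(f) \big| \leq \wce\big(\numint[X_N]; \mathbb{H}^{s'}(\mathbb{S}^d)\big) \, \| f \|_{\mathbb{H}^{s'}} \leq C \, \wce\big(\numint[X_N]; \mathbb{H}^{s'}(\mathbb{S}^d)\big).
\end{equation*}
Taking the supremum over the unit ball of $\mathbb{H}^s(\mathbb{S}^d)$ gives the operator-norm comparison
\begin{equation*}
\wce(\numint[X_N]; \mathbb{H}^s(\mathbb{S}^d)) \leq C \, \wce(\numint[X_N]; \mathbb{H}^{s'}(\mathbb{S}^d)).
\end{equation*}

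Finally, the hypothesis that $(X_N)$ is a sequence of QMC designs for $\mathbb{H}^{s'}(\mathbb{S}^d)$ provides, via Definition~\ref{def:specific_approx.sph.design}, a constant $c(s',d) > 0$ with $\wce(\numint[X_N]; \mathbb{H}^{s'}(\mathbb{S}^d)) \leq c(s',d) \, N^{-s'/d}$. Combining this with the previous inequality and setting $c(s,s',d) \DEF C \cdot c(s',d)$ yields the claim; the trivial case $s = s'$ is covered by taking $C = 1$. I do not foresee any real obstacle, since the whole argument reduces to the elementary monotonicity of the Sobolev norms in the smoothness index~$s$, with the only mild bookkeeping being the tracking of the equivalence constants between the particular norms chosen on $\mathbb{H}^s(\mathbb{S}^d)$ and $\mathbb{H}^{s'}(\mathbb{S}^d)$ via \eqref{eq:sequenceassumption}.
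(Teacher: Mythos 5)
Your argument is correct and is essentially the paper's own: the paper compares the two worst-case errors termwise via the expansion in \eqref{eq:earlywce} using $a_\ell^{(s)}\lesssim a_\ell^{(s')}$ for $\ell\ge 1$, which is the same coefficient monotonicity you invoke, just phrased on the representer side rather than through the norm embedding $\| f \|_{\mathbb{H}^{s'}} \leq C \| f \|_{\mathbb{H}^{s}}$ and the unit-ball inclusion. Both routes then conclude by inserting the QMC design bound for $\mathbb{H}^{s'}(\mathbb{S}^d)$, so no gap remains.
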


This result follows from the last expression in \eqref{eq:earlywce}, since it implies that, with respect to the canonical kernels \eqref{eq:K.can} for $\mathbb{H}^s( \mathbb{S}^d )$ and $\mathbb{H}^{s^\prime}( \mathbb{S}^d )$, there holds for any $N$-point configuration $X_N$, 
\begin{equation} \label{eq:wce.inequality.A}
\wce( \numint[X_N]; \mathbb{H}^s( \mathbb{S}^d ) ) < \wce( \numint[X_N]; \mathbb{H}^{s^\prime}( \mathbb{S}^d ) ), \qquad \text{$d/2 < s^\prime < s$.}
\end{equation}

We shall exploit Theorem~\ref{thm:best.rate} in Section~\ref{sec:numerics} to determine empirical values of the supremum $s^*$ in \eqref{eq:s.star} for a number of putative sequences of QMC designs.

\subsection{Truncations of the Laplace-Fourier series} \label{subsec:truncation}

Using truncations of the Laplace-Fourier series of a fixed reproducing kernel, we provide in the next theorem a sufficient condition for a sequence of point sets to be a QMC design sequence. 
It relies on the following regularity condition, first introduced by Sloan and Womersley in \cite{SlWo2000} in the context of positive weight quadrature rules. This imposed condition allows us to control the contribution to the worst-case error of the suppressed tail-part of the aforementioned kernel.

\begin{defn}[Property R] \label{def:propertyR}
A sequence $(Z_N)$ of $N$-point configurations on $\mathbb{S}^d$ is said to have the Property R if there exist positive numbers $c_0$ and $c_1$ independent of $N$,  such that for all $Z_N$ and all $\PT{z} \in \mathbb{S}^d$ the nodes satisfy
\begin{equation*} 
|Z_N\cap \mathcal{C}(\PT{z};c_1 N^{-1/d})| \leq c_0,
\end{equation*}
where 
\begin{equation} \label{eq:spherical.cap}
\mathcal{C}(\PT{z}; \theta) \DEF \{ \PT{y} \in \mathbb{S}^d : \PT{y} \cdot \PT{z} \geq \cos \theta \}
\end{equation}
denotes the spherical cap of geodesic radius $\theta$ centered at $\PT{z}$.
\end{defn}
The regularity Property R expresses a natural requirement, that the number of points in a spherical cap with radius of order $N^{-1/d}$, and hence whose area is of order $N^{-1}$, should be bounded independently of both $N$ and the location of the cap. 

\begin{rmk}
A sequence of $N$-point sets $(X_N)$, $X_N = \{ \PT{x}_{1,N}, \dots, \PT{x}_{N,N} \}$, is called \emph{well-separated} if there exists a constant $c_d > 0$ such that
\begin{equation*}
\min_{j \neq i} \left| \PT{x}_{j,N} - \PT{x}_{i,N} \right| \geq \frac{c_d}{N^{1/d}}, \qquad N = 2, 3, 4, \dots.
\end{equation*}
It is easily seen that a well-separated sequence of configurations satisfies Property R, but not conversely.
Furthermore, Reimer~\cite{Re2000} has shown that every sequence of spherical $t$-designs with $N \asymp t^d$ points automatically satisfies Property R.
\end{rmk}

\begin{thm} \label{thm:characterization}
Under the assumptions of Proposition~\ref{prop:wce}, a sequence $(X_N)$ of $N$-point configurations on $\mathbb{S}^d$ satisfying Property R is a sequence of QMC designs for $\mathbb{H}^s(\mathbb{S}^d)$ if and only if for integers $t$ with $t \asymp N^{1/d}$ 
\begin{equation} \label{eq:char.rel}
\frac{1}{N^2} \sum_{j = 1}^N \sum_{i = 1}^N \mathcal{K}_t^{(s)}( \PT{x}_{j,N} \cdot \PT{x}_{i,N} ) = \mathcal{O}(N^{-2s/d}) \qquad \text{as $N \to \infty$,}
\end{equation}
where $\mathcal{K}_t^{(s)}$ is the truncated kernel corresponding to \eqref{calK},
\begin{equation} \label{eq:K.t}
\mathcal{K}_t^{(s)}(\PT{x} \cdot \PT{y}) \DEF \sum_{\ell=1}^t a_\ell^{(s)} Z(d,\ell) P_\ell^{(d)}(\PT{x} \cdot \PT{y}).
\end{equation}
\end{thm}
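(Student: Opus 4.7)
My plan is to start from the Parseval-type representation provided by Proposition~\ref{prop:wce} together with Lemma~\ref{lem:positive.definiteness}, namely
\[
[\wce(\numint[X_N]; \mathbb{H}^s(\mathbb{S}^d))]^2 = \sum_{\ell=1}^\infty a_\ell^{(s)}\, \Phi_\ell(X_N),
\]
in which every summand is nonnegative. I would then fix $t=t(N)\asymp N^{1/d}$ and split the series into a head part
\[
A_t(N) \DEF \sum_{\ell=1}^t a_\ell^{(s)}\,\Phi_\ell(X_N) = \frac{1}{N^2}\sum_{j=1}^N\sum_{i=1}^N \mathcal{K}_t^{(s)}(\PT{x}_{j,N}\cdot \PT{x}_{i,N})
\]
and a tail part $B_t(N) \DEF \sum_{\ell>t} a_\ell^{(s)}\Phi_\ell(X_N)\geq 0$, observing that Property~R has not yet been invoked.

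The necessity direction ($\Rightarrow$) I expect to be immediate and to bypass Property~R entirely: if $(X_N)$ is a QMC design sequence then $[\wce]^2 = \mathcal{O}(N^{-2s/d})$, so nonnegativity of $B_t(N)$ forces $A_t(N)\leq [\wce]^2 = \mathcal{O}(N^{-2s/d})$, which is precisely \eqref{eq:char.rel}.

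For the sufficiency direction ($\Leftarrow$), the task reduces to showing that Property~R alone forces the uniform bound $B_t(N) = \mathcal{O}(N^{-2s/d})$ whenever $t\asymp N^{1/d}$, independently of the head hypothesis. I would write
\[
B_t(N) = \frac{1}{N^2}\sum_{j,i} G_t(\PT{x}_{j,N}\cdot\PT{x}_{i,N}), \qquad G_t(u) \DEF \sum_{\ell>t} a_\ell^{(s)}\, Z(d,\ell)\, P_\ell^{(d)}(u),
\]
and split index pairs into (i)~the diagonal $j=i$, (ii)~close off-diagonal pairs at geodesic distance $\lesssim N^{-1/d}$, and (iii)~the remaining far pairs. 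Case (i) contributes $G_t(1)/N \asymp t^{d-2s}/N \asymp N^{-2s/d}$, already of the desired order. For (ii), Property~R caps the number of close pairs at $\mathcal{O}(N)$, and the trivial bound $|G_t(u)|\leq G_t(1)\asymp N^{1-2s/d}$ again produces $\mathcal{O}(N^{-2s/d})$. For (iii), I would perform a dyadic decomposition over the geodesic angle $\theta_{ji}\in[c\,N^{-1/d},\pi]$ and combine it with the covering consequence of Property~R---that a cap of radius $\theta\gtrsim N^{-1/d}$ contains $\mathcal{O}((\theta N^{1/d})^d)$ points---to reduce matters to a pointwise decay estimate for $G_t(\cos\theta)$.

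The hard part will be obtaining a sufficiently sharp decay estimate for this far-field kernel: a term-by-term Darboux bound of the form $|P_\ell^{(d)}(\cos\theta)|\lesssim (\ell\sin\theta)^{-(d-1)/2}$ dissipates exactly the cancellation that one needs, so I expect to use Abel summation together with the Christoffel--Darboux identity in order to exploit oscillation across the range $\ell>t$. Once such an estimate is secured, the three contributions combine to $B_t(N)=\mathcal{O}(N^{-2s/d})$, and adding this to the assumed head bound \eqref{eq:char.rel} yields $[\wce]^2 = A_t(N)+B_t(N) = \mathcal{O}(N^{-2s/d})$, establishing the QMC design property for $(X_N)$.
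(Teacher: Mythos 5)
Your necessity direction is correct and coincides with the paper's: since both $A_t(N)$ and $B_t(N)$ are sums of nonnegative terms (Lemma~\ref{lem:positive.definiteness}), the head is trivially dominated by $[\wce(\numint[X_N];\mathbb{H}^s(\mathbb{S}^d))]^2$, and Property~R is not needed there (this is Proposition~\ref{prop:necessary.conditions} in the paper). The gap is in the sufficiency direction, and it sits exactly where you flagged ``the hard part.'' Your plan reduces the far-field contribution to a pointwise decay estimate for the sharp-cutoff tail $G_t=\mathfrak{T}_t^{(s)}$ followed by dyadic counting via Property~R. For that counting to close, you need $|G_t(\cos\theta)|\lesssim t^{d-2s}(1+t\theta)^{-\alpha}$ with $\alpha>d$: the number of pairs at angular distance $\asymp\theta$ is $\lesssim N^2\theta^d$, so the dyadic sum $\sum_{\theta}\theta^d\,t^{d-2s}(t\theta)^{-\alpha}=t^{-2s}\sum_\theta (t\theta)^{d-\alpha}$ is $\mathcal{O}(t^{-2s})$ only if $\alpha>d$; otherwise it is dominated by $\theta\asymp 1$ and loses a factor $t^{d-\alpha}$. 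But Abel summation against the Christoffel--Darboux partial sums $S_\ell$ (with $|S_\ell(\cos\theta)|\lesssim \ell^{(d-1)/2}\theta^{-(d+1)/2}$) produces an irreducible boundary term $a_{t+1}S_t(\cos\theta)$ of genuine size $\asymp t^{-2s}\,t^{(d-1)/2}\theta^{-(d+1)/2}=t^{d-2s}(t\theta)^{-(d+1)/2}$, i.e.\ $\alpha=(d+1)/2<d$ for $d\ge2$. This is not an artifact of the method: the Gibbs-type oscillation of a sharply truncated expansion really has that amplitude at $\theta$ bounded away from $0$ and $\pi$, so any argument that sums $|G_t(\PT{x}_j\cdot\PT{x}_i)|$ over the $\asymp N^2$ far pairs is short of the target by a factor $t^{(d-1)/2}$. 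Property~R alone gives you no cancellation across pairs to recover this.

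The paper closes this gap differently, via Theorem~\ref{thm:wce.S.d.bounds} (proved in \cite{BrWo2010Manuscript} by the ``kernel-splitting'' method alluded to after Proposition~\ref{prop:necessary.conditions}): the kernel is split not at a sharp frequency cutoff but into a polynomial part with \emph{smoothly filtered} coefficients plus a remainder whose coefficients vary smoothly and vanish for $\ell\lesssim t$, and such a remainder does satisfy localization $\lesssim t^{d-2s}(1+t\theta)^{-M}$ for arbitrarily large $M$, so your dyadic counting then works. The price is that the polynomial part is no longer $\mathcal{K}_t^{(s)}$ itself, whence the constant $c$ (possibly $>1$) in front of the head term in Theorem~\ref{thm:wce.S.d.bounds}; this is harmless because hypothesis \eqref{eq:char.rel} controls the head anyway. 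If you want to salvage your own route, the cleanest repair is to use the nonnegativity of $\Phi_\ell$ to dominate the sharp cutoff by a smooth majorant before taking absolute values: choose $\psi$ smooth with $\mathbf{1}_{[1,\infty)}\le\psi\le\mathbf{1}_{[1/2,\infty)}$, so that $B_t(N)\le\sum_{\ell}a_\ell^{(s)}\psi(\ell/t)\Phi_\ell(X_N)$, then decompose $\psi(\cdot/t)=\sum_{k\ge0}\varphi(\cdot/(2^kt))$ into dyadic frequency blocks and apply the fast-decay localized-kernel estimates to each block; the blocks contribute $\lesssim t^{-2s}2^{k(d-2s)}$, summable since $s>d/2$. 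Either way, an additional idea beyond what you propose is required.
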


This fits well with the framework of variational characterization of spherical designs (see \cite{ChWo2006, CoKu2007, GrTi1993, SlWo2009}), where one seeks point configurations for which the left-hand side in \eqref{eq:char.rel} vanishes. 
The proof of Theorem~\ref{thm:characterization} relies on a non-trivial ``kernel-splitting'' argument that is discussed in Section~\ref{sec:proofs}.

\section{Cui and Freeden kernel}
\label{sec:Cui.Freeden.kernel}

For $\mathbb{S}^2$, Cui and Freeden \cite{CuFr1997} studied the kernel
\begin{equation} \label{eq:K.CF}
K_\CF(\PT{x}, \PT{y}) \DEF 1 + \sum_{\ell=1}^\infty \frac{1}{\ell ( \ell + 1 )} \, P_\ell( \PT{x} \cdot \PT{y} ) = 2 - 2 \log\Big( 1 + \sqrt{ \frac{1-\PT{x}\cdot\PT{y}}{2}} \Big),\qquad \PT{x},\PT{y}\in\mathbb{S}^2.
\end{equation}
It was observed in \cite{SlWo2004} that this is a reproducing kernel for $\mathbb{H}^{3/2}( \mathbb{S}^2 )$ as can be seen from the above Laplace-Fourier series expansion in terms of Legendre polynomials; cf. Section~\ref{sec:prelim}. 
Since the constant term in the series expansion \eqref{eq:K.CF} is $1$, Proposition~\ref{prop:wce} asserts that the corresponding worst-case error for $\mathbb{H}^{3/2}( \mathbb{S}^2 )$ equipped with this kernel is 
\begin{equation} \label{eq:Cui.Freeden.discrepancy}
\wce( \numint[X_N]; \mathbb{H}^{3/2}(\mathbb{S}^2) ) = \left(1 - \frac{2}{N^2} \sum_{j=1}^N \sum_{i=1}^N \log\Big( 1 + \sqrt{\frac{1 - {\PT x}_j \cdot {\PT x}_i}{2}} \Big) \right)^{1/2}.
\end{equation}

The right-hand side, up to a constant factor, is known as the \emph{Cui and Freeden (CF) discrepancy} of $X_N$. 
Note that by Theorem~\ref{thm:minimizers}, \emph{sequences of $N$-point configurations that minimize the CF discrepancy are sequences of QMC designs for $\mathbb{H}^{3/2}( \mathbb{S}^2 )$}. This fact can also be seen without appealing to results for spherical designs by applying the independently derived Theorem~\ref{thm:firstmax} in the next section and using the equivalence of norms.\footnote{An even more direct proof can be given by applying \cite[Theorem~2.2]{RaSaZh1994}.}
In \cite{CuFr1997} this discrepancy has been used to test for uniformity of a sequence of $N$-point configurations.
Furthermore, the CF discrepancy was used in \cite{SlWo2004} for analyzing quadrature properties of so-called extremal or Fekete points (these are sets of $(t+1)^2$ points on $\mathbb{S}^2$ that maximize the determinant of the interpolation matrix for polynomials of degree $t$).
Numerical data in \cite{SlWo2004} suggested that for $\mathbb{H}^{3/2}( \mathbb{S}^2 )$, the CF discrepancy for spherical $t$-designs obtained using Fekete points as starting points, decays like $\mathcal{O}(t^{-3/2})$; this in turn led to the discovery of Theorem \ref{thm:s.d.prop}.

\section{Generalized distance kernel}
\label{sec:generalized.distance.kernel}

In the following we make use of the identity
\begin{equation*}
\left| \PT{x} - \PT{y} \right|^2 = 2 - 2 \PT{x} \cdot \PT{y}, \qquad \PT{x}, \PT{y} \in \mathbb{S}^d.
\end{equation*}

Reproducing kernels for $\mathbb{H}^s( \mathbb{S}^d )$ for $s > d/2$ can be constructed utilizing powers of distances, provided the power $2s-d$ is not an even integer. Indeed, it is known (cf., e.g., \cite{HuBa2001}) that the signed power of the distance, with sign $(-1)^{L+1}$ with $L \DEF L(s) \DEF \lfloor s - d/2 \rfloor$, has the following Laplace-Fourier expansion: 
\begin{equation} \label{eq:signed.distance}
(-1)^{L+1} \left| \PT{x} - \PT{y} \right|^{2s-d} = (-1)^{L+1} V_{d-2s}( \mathbb{S}^d ) + \sum_{\ell=1}^\infty \alpha_\ell^{(s)} \, Z(d, \ell) \, P_\ell^{(d)}( \PT{x} \cdot \PT{y} ), \qquad \text{$\PT{x}, \PT{y} \in \mathbb{S}^d$,}
\end{equation}
where
\begin{equation} \label{eq:gd.kernel.coeffs}
\begin{split}
V_{d-2s}( \mathbb{S}^d ) 
&\DEF \int_{\mathbb{S}^d} \int_{\mathbb{S}^d} \left| \PT{x} - \PT{y} \right|^{2s-d} \dd \sigma_d( \PT{x} ) \dd \sigma_d( \PT{y} ) = 2^{2s-1} \frac{\gammafcn((d+1)/2) \gammafcn(s)}{\sqrt{\pi} \gammafcn(d/2+s)}, \\
\alpha_\ell^{(s)} 
&\DEF V_{d-2s}( \mathbb{S}^d ) \, \frac{(-1)^{L+1} \Pochhsymb{d/2-s}{\ell}}{\Pochhsymb{d/2+s}{\ell}}, \quad \ell \geq 1.
\end{split}
\end{equation}
From these formulas one can verify that all the coefficients $\alpha_\ell^{(s)}$ are positive for $\ell \geq L+1$ and alternate in sign for $\ell \leq L$. 
Furthermore, the $\alpha_\ell^{(s)}$'s decay with the rate required for coefficients in the Laplace-Fourier expansion of a reproducing kernel for $\mathbb{H}^s( \mathbb{S}^d )$ as can be seen from the asymptotic expansion
\begin{equation} \label{eq:alpha.asymptotics}
\alpha_\ell^{(s)} \sim 2^{2s-1} \frac{\gammafcn((d+1)/2) \gammafcn(s)}{\sqrt{\pi} [ (-1)^{L+1} \gammafcn(d/2-s) ]} \, \ell^{-2s} \qquad \text{as $\ell \to \infty$.}
\end{equation}
Thus, by modifying if necessary some of the early coefficients, one can derive a reproducing kernel for $\mathbb{H}^s( \mathbb{S}^d )$ for $s > d/2$ and $2s-d$ not an even integer\footnote{In the case of $2s-d$ is an even integer, the expansion \eqref{eq:signed.distance} terminates after finitely many terms and so the $\alpha_\ell^{(s)}$'s do not satisfy the appropriate asymptotic behavior \eqref{eq:sequenceassumption} for $\mathbb{H}^s( \mathbb{S}^d )$.} (cf. Section~\ref{subsec:H.as.RKHS}). 

\subsection*{Case I}

For $d/2 < s < d/2 + 1$ (in which case, $L(s) = 0$), only the constant term in \eqref{eq:signed.distance} is negative and thus by adding any constant larger than $V_{d-2s}( \mathbb{S}^d )$, say $2V_{d-2s}( \mathbb{S}^d )$, we obtain the following reproducing kernel for $\mathbb{H}^s( \mathbb{S}^d )$ which we call the  ``generalized distance'' kernel:
\begin{equation} \label{eq:K.gd}
K_{\rm{gd}}^{(s)}( \PT{x}, \PT{y} ) \DEF 2 V_{d-2s}( \mathbb{S}^d ) - \left| \PT{x} - \PT{y} \right|^{2s-d}, \qquad \PT{x}, \PT{y} \in \mathbb{S}^d.
\end{equation} 
In particular, for $s=(d+1)/2$ we get the ``distance kernel'' for $\mathbb{H}^{(d+1)/2}( \mathbb{S}^d )$:
\begin{equation} \label{eq:K.C.s}
K_{\mathrm{dist}}( \PT{x} , \PT{y} ) \DEF K_{\rm{gd}}^{((d+1)/2)}( \PT{x}, \PT{y} ) = 2 V_{-1}(\mathbb{S}^d) - \left| \PT{x} - \PT{y} \right|, \qquad \PT{x}, \PT{y} \in \mathbb{S}^d,
\end{equation}
which for $d=2$ is equivalent to the Cui and Freeden kernel in the sense that there exist positive constants $c$ and $C$ such that $c \, K_{\mathrm{dist}}( \PT{x}, \PT{y} ) \leq K_\CF( \PT{x}, \PT{y} ) \leq C \, K_{\mathrm{dist}}( \PT{x}, \PT{y} )$ for all $\PT{x}, \PT{y} \in \mathbb{S}^2$.

With respect to the $K_{\rm{gd}}^{(s)}$ kernel, the worst-case error for $d/2 < s < d/2+1$ is, from Proposition~\ref{prop:wce}, given by
\begin{equation} \label{eq:dist.kernel.wce.formula}
\wce( \numint[X_N]; \mathbb{H}^{s}(\mathbb{S}^d) ) = \left( V_{d-2s}(\mathbb{S}^d) - \frac{1}{N^2} \sum_{j=1}^N \sum_{i=1}^N \left| \PT{x}_j - \PT{x}_i \right|^{2s-d} \right)^{1/2}.
\end{equation}
According to Theorem~\ref{thm:minimizers}, for $d/2 < s < d/2 + 1$, minimizing the right-hand side above, or equivalently, maximizing the sum of generalized distances, yields QMC designs for $\mathbb{H}^s( \mathbb{S}^d )$. This fact can also be established without appealing to any properties of spherical designs (and hence is independent of Theorem~\ref{thm:bondarenko.et.al}).
Indeed, Wagner~\cite{Wa1992}, extending a result of Stolarsky~\cite{St1973}, showed that for $d/2 < s < d/2 + 1$ there exists a sequence of $N$-point configurations $\{ \PT{x}_{1,N}^*, \dots, \PT{x}_{N,N}^* \}$ and a positive constant $\eta_{s,d}$ such that 
\begin{equation} \label{WAGNERsbounds}
V_{d-2s}( \mathbb{S}^d ) - \frac{1}{N^{2}} \sum_{j = 1}^N \sum_{i = 1}^N \left| \PT{x}_{j,N}^* - \PT{x}_{i,N}^* \right|^{2s-d} \leq \eta_{s,d} \, N^{-2s/d}, \qquad N \geq 2.
\end{equation}
(This fact also follows immediately from Theorem~\ref{thm:equal.area} below dealing with randomized equal area points on $\mathbb{S}^d$.)
%
Consequently, we have provided an independent proof of the following result.

\begin{thm} \label{thm:firstmax}
Given $s \in (d/2, d/2+1)$, a sequence of $N$-point sets $X_N^*$ that maximize the generalized sum of  Euclidean distances \eqref{eq:generalised.sum.dist} is a sequence of QMC designs for $\mathbb{H}^s( \mathbb{S}^d )$.
\end{thm}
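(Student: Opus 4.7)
The plan is to read off the worst-case error from the identity \eqref{eq:dist.kernel.wce.formula}, which (for $d/2 < s < d/2+1$ and with respect to the reproducing kernel $K_{\rm gd}^{(s)}$) gives
\begin{equation*}
\bigl[\wce(\numint[X_N]; \mathbb{H}^s(\mathbb{S}^d))\bigr]^2 = V_{d-2s}(\mathbb{S}^d) - \frac{1}{N^2} \sum_{j=1}^N \sum_{i=1}^N |\PT{x}_j - \PT{x}_i|^{2s-d}.
\end{equation*}
In this range of $s$ the right-hand side is nonnegative (it is the squared norm of the representer of the error in the RKHS $\mathbb{H}^s(\mathbb{S}^d)$). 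Minimizing the worst-case error over all $N$-point configurations is therefore exactly the same problem as maximizing the generalized sum of distances in \eqref{eq:generalised.sum.dist}.

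The key input, which the preceding discussion cites as Wagner's theorem \cite{Wa1992}, is the existence of a sequence $(\tilde X_N)$ with
\begin{equation*}
V_{d-2s}(\mathbb{S}^d) - \frac{1}{N^2} \sum_{j=1}^N \sum_{i=1}^N |\tilde{\PT{x}}_j - \tilde{\PT{x}}_i|^{2s-d} \leq \eta_{s,d} \, N^{-2s/d}, \qquad N \geq 2.
\end{equation*}
Given this, the argument is immediate: if $X_N^*$ maximizes the generalized sum of distances among $N$-point configurations, then
\begin{equation*}
\sum_{j,i} |\PT{x}_{j,N}^* - \PT{x}_{i,N}^*|^{2s-d} \geq \sum_{j,i} |\tilde{\PT{x}}_j - \tilde{\PT{x}}_i|^{2s-d},
\end{equation*}
so combining the two displays yields $\wce(\numint[X_N^*]; \mathbb{H}^s(\mathbb{S}^d))^2 \leq \eta_{s,d} \, N^{-2s/d}$ and hence the QMC design bound \eqref{eq:approxfors} with $c(s,d) = \sqrt{\eta_{s,d}}$.

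Finally, I would note explicitly that this establishes \eqref{eq:approxfors} with respect to the particular norm induced by the reproducing kernel $K_{\rm gd}^{(s)}$; by the equivalence-of-norms remark following \eqref{eq:sobnorm}, the QMC-design property is insensitive to the choice of equivalent $\mathbb{H}^s$-norm, so the conclusion holds for any admissible norm. There is no real obstacle here apart from invoking Wagner's inequality as a black box; the whole point of the statement is that, unlike the proof via Theorem~\ref{thm:minimizers}, this route bypasses the Bondarenko--Radchenko--Viazovska existence theorem for spherical designs, since Wagner's result predates it and rests only on a direct construction.
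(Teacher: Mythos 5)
Your proof is correct and follows essentially the same route as the paper: the worst-case error identity \eqref{eq:dist.kernel.wce.formula} for the kernel $K_{\rm gd}^{(s)}$ turns the problem into maximizing the generalized distance sum, Wagner's bound \eqref{WAGNERsbounds} supplies a competitor sequence achieving $\mathcal{O}(N^{-2s/d})$ for the squared error, and the maximizers do at least as well. The norm-equivalence remark and the observation that this bypasses Theorem~\ref{thm:bondarenko.et.al} are exactly the points the paper makes as well.
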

 
We remark that for $s > d/2 + 1$, $N$-point configurations on $\mathbb{S}^d$ with maximum generalized sum of distances (without further restrictions) will have a limit distribution that is concentrated in two opposite points on $\mathbb{S}^d$ (Bj{\"o}rck~\cite[Remark~1 following Theorem~7]{Bj1956}) and, clearly, do not lead to  QMC designs. 

\subsection*{Case II}

For $s > d/2 + 1$ and $L$ as defined as above (so that $d/2 + L < s < d/2 + L + 1$), the representation~\eqref{eq:signed.distance} gives rise to a reproducing kernel for $\mathbb{H}^s( \mathbb{S}^d )$ of the form 
\begin{equation} \label{eq:K.gd.general}
K^{(s)}_{\rm{gd}}( \PT{x}, \PT{y} ) \DEF 
\left( 1 - (-1)^{L+1} \right)  V_{d-2s}( \mathbb{S}^d ) + \mathcal{Q}_L( \PT{x} \cdot \PT{y} ) + (-1)^{L+1} \left| \PT{x} - \PT{y} \right|^{2s-d}, \qquad \PT{x}, \PT{y} \in \mathbb{S}^d,
\end{equation}
where $\mathcal{Q}_L$ is a polynomial of degree $L \geq 1$, 
\begin{equation} \label{eq:cal.Q.L}
\mathcal{Q}_L( \PT{x} \cdot \PT{y} ) \DEF \sum_{\ell=1}^L \left( (-1)^{L+1-\ell} - 1 \right) \alpha_\ell^{(s)} \, Z(d, \ell) \, P_\ell^{(d)}( \PT{x} \cdot \PT{y} ), \qquad \PT{x}, \PT{y} \in \mathbb{S}^d,
\end{equation}
that simply changes the signs of the negative coefficients $\alpha_\ell^{(s)}$, $\ell \geq 1$, in \eqref{eq:signed.distance}.
As a consequence of Theorem~\ref{thm:minimizers}, we obtain the following.
\begin{thm}
Given $s \in (d/2+L, d/2+L+1)$, where $L$ is a positive integer, the sequences of $N$-point sets $X_N^{*}$ that minimize the worst-case error
\begin{equation} \label{eq:wce.4.K.dg.kernel}
\begin{split}
&\wce( \numint[X_N]; \mathbb{H}^{s}(\mathbb{S}^d) ) \\
&\phantom{equal}= \left( \frac{1}{N^2} \sum_{j=1}^N \sum_{i=1}^N \left[ \mathcal{Q}_L( \PT{x}_j \cdot \PT{x}_i ) + (-1)^{L+1} \left| \PT{x}_j - \PT{x}_i \right|^{2s-d} \right] - (-1)^{L+1} V_{d-2s}( \mathbb{S}^d ) \right)^{1/2}
\end{split}
\end{equation}
form a sequence of QMC designs for $\mathbb{H}^s( \mathbb{S}^d )$. 
\end{thm}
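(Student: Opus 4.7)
\medskip

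\noindent\textbf{Proof proposal.} The plan is to invoke Theorem~\ref{thm:minimizers} after verifying that $K_{\rm{gd}}^{(s)}$ in \eqref{eq:K.gd.general} really is a reproducing kernel for $\mathbb{H}^s(\mathbb{S}^d)$ and that the quantity in \eqref{eq:wce.4.K.dg.kernel} is the associated worst-case error.

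\medskip

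First, I would compute the Laplace--Fourier expansion of $K_{\rm{gd}}^{(s)}$. Substituting \eqref{eq:signed.distance} into \eqref{eq:K.gd.general}, the constant term becomes
$(1-(-1)^{L+1})V_{d-2s}(\mathbb{S}^d) + (-1)^{L+1}V_{d-2s}(\mathbb{S}^d) = V_{d-2s}(\mathbb{S}^d) > 0$,
while the coefficient of $Z(d,\ell)P_\ell^{(d)}(\PT{x}\cdot\PT{y})$ equals $\alpha_\ell^{(s)}$ for $\ell\geq L+1$ and equals $(-1)^{L+1-\ell}\alpha_\ell^{(s)}$ for $1\leq\ell\leq L$, since $\mathcal{Q}_L$ was designed precisely to flip the signs of the negative $\alpha_\ell^{(s)}$. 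Because $\alpha_\ell^{(s)}$ is positive for $\ell\geq L+1$ and alternates in sign for $\ell\leq L$, all Laplace--Fourier coefficients of $K_{\rm{gd}}^{(s)}$ are strictly positive, and by \eqref{eq:alpha.asymptotics} they satisfy the asymptotic decay $\asymp \ell^{-2s}\asymp (1+\lambda_\ell)^{-s}$ required in \eqref{eq:sequenceassumption}. Hence $K_{\rm{gd}}^{(s)}$ is a reproducing kernel for $\mathbb{H}^s(\mathbb{S}^d)$ in the sense of Section~\ref{subsec:H.as.RKHS}, with $a_0^{(s)}=V_{d-2s}(\mathbb{S}^d)$.

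\medskip

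Second, I would apply Proposition~\ref{prop:wce}. Using \eqref{eq:wce2} with $K^{(s)}=K_{\rm{gd}}^{(s)}$ and $a_0^{(s)}=V_{d-2s}(\mathbb{S}^d)$, a direct substitution gives
\begin{equation*}
\bigl[\wce(\numint[X_N];\mathbb{H}^s(\mathbb{S}^d))\bigr]^2
= \frac{1}{N^2}\sum_{j=1}^N\sum_{i=1}^N\Bigl[\mathcal{Q}_L(\PT{x}_j\cdot\PT{x}_i)+(-1)^{L+1}\bigl|\PT{x}_j-\PT{x}_i\bigr|^{2s-d}\Bigr] - (-1)^{L+1}V_{d-2s}(\mathbb{S}^d),
\end{equation*}
since the $(1-(-1)^{L+1})V_{d-2s}(\mathbb{S}^d)$ contribution from the constant part of $K_{\rm{gd}}^{(s)}$ cancels against $-a_0^{(s)}=-V_{d-2s}(\mathbb{S}^d)$ up to the displayed sign. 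This is precisely \eqref{eq:wce.4.K.dg.kernel}.

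\medskip

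Finally, Theorem~\ref{thm:minimizers} applies with the zonal kernel $\mathcal{K}^{(s)}(\PT{x}\cdot\PT{y})$ obtained from $K_{\rm{gd}}^{(s)}$ by removing its constant term (equivalently, summing from $\ell=1$ in \eqref{calK}). Minimizing the worst-case error on the left of \eqref{eq:wce.4.K.dg.kernel} is equivalent to minimizing the $\mathcal{K}^{(s)}$-energy $\sum_{j,i}\mathcal{K}^{(s)}(\PT{x}_j\cdot\PT{x}_i)$ up to the additive constant $-(-1)^{L+1}V_{d-2s}(\mathbb{S}^d)$, so Theorem~\ref{thm:minimizers} immediately yields $\wce(\numint[X_N^*];\mathbb{H}^s(\mathbb{S}^d))\leq c(s,d)\,N^{-s/d}$, establishing that $(X_N^*)$ is a sequence of QMC designs for $\mathbb{H}^s(\mathbb{S}^d)$.

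\medskip

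The only nontrivial step is the coefficient/sign analysis in the first paragraph; I expect this to be the main (but still routine) obstacle, requiring careful tracking of the sign of $\Pochhsymb{d/2-s}{\ell}$ as $\ell$ crosses $L+1$, together with the fact that $\mathcal{Q}_L$ is a polynomial of \emph{finite} degree so it contributes only to the first $L$ harmonic coefficients without disturbing the asymptotic decay dictated by \eqref{eq:alpha.asymptotics}.
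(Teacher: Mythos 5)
Your proposal is correct and follows essentially the same route as the paper: the paper establishes in Section~\ref{sec:generalized.distance.kernel} that $K_{\rm gd}^{(s)}$ in \eqref{eq:K.gd.general} has all Laplace--Fourier coefficients positive with the decay \eqref{eq:sequenceassumption} (so it is a reproducing kernel for $\mathbb{H}^s(\mathbb{S}^d)$, with $a_0^{(s)}=V_{d-2s}(\mathbb{S}^d)$), reads off \eqref{eq:wce.4.K.dg.kernel} from Proposition~\ref{prop:wce}, and then derives the theorem directly from Theorem~\ref{thm:minimizers}, exactly as you do. The only cosmetic point is that by Proposition~\ref{prop:wce} the squared worst-case error \emph{equals} the normalized $\mathcal{K}^{(s)}$-energy (the constant $-(-1)^{L+1}V_{d-2s}(\mathbb{S}^d)$ is already absorbed in passing from $K_{\rm gd}^{(s)}$ to $\mathcal{K}^{(s)}$), so no ``up to an additive constant'' caveat is needed.
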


For fixed $L \geq1$, one can avoid the introduction of the $\mathcal{Q}_L$-energy term in the worst-case error formula above by restricting attention to node sets $X_{N,L} = \{ \PT{x}_{1,L}, \dots, \PT{x}_{N,L} \}$ that are spherical $L$-designs; i.e., satisfy 
\begin{equation} \label{eq:side.condition}
\sum_{j=1}^N Y_{\ell,k}( \PT{x}_{j,L} ) = 0, \qquad 1 \leq \ell \leq L, \, 1 \leq k \leq Z(d,\ell).
\end{equation} 
For such sequences the worst-case error formula reduces to
\begin{equation*}
\left[ \wce( \numint[X_{N,L}]; \mathbb{H}^s( \mathbb{S}^d ) ) \right]^2 = \frac{1}{N^2} \sum_{j=1}^{N} \sum_{i=1}^{N} (-1)^{L+1} \left| \PT{x}_{j,L} - \PT{x}_{i,L} \right|^{2s-d} - (-1)^{L+1} V_{d-2s}(\mathbb{S}^d).
\end{equation*}
Thus, spherical $L$-design configurations that minimize
\begin{equation*}
\sum_{j=1}^{N} \sum_{i=1}^{N} (-1)^{L+1} \left| \PT{x}_{j,L} - \PT{x}_{i,L} \right|^{2s-d}, \qquad N \geq c_d \, L^d,
\end{equation*}
yield sequences of QMC designs for $\mathbb{H}^s( \mathbb{S}^d )$ whenever $2s-d$ is not an even integer. 

Note that for even $2s - d = 2L = 2, 4, \dots$, the expansion \eqref{eq:signed.distance} terminates and is a polynomial of degree $L$ in $\PT{x} \cdot \PT{y}$. In such a case the radial (signed) generalized distance $(-1)^{L+1} | \PT{x} - \PT{y} |^{2L} \log | \PT{x} - \PT{y} |$ can be used to define a reproducing kernel for $\mathbb{H}^{2L}(\mathbb{S}^d)$. This approach is explored in \cite{BrWo2010Manuscript}.

\section{Uniform distribution and low-discrepancy sequences on the sphere}
\label{sec:uniform.distribution}

\subsection*{Uniform distribution}

An infinite sequence $(X_N)$ of $N$-point configurations on $\mathbb{S}^d$ is {\em asymptotically uniformly distributed on $\mathbb{S}^d$} if for every $f \in C( \mathbb{S}^d )$ there holds
\begin{equation} \label{eq:weak-star.convergence}
\numint[X_N](f) \to \int_{\mathbb{S}^d} f( \PT{x} ) \dd \sigma_d( \PT{x} ) \quad \text{as $N \to \infty$.}
\end{equation}

For a sequence $(X_N)$ of QMC designs for $\mathbb{H}^s( \mathbb{S}^d )$, $s > d/2$, it follows from \eqref{eq:Koksma.like} and \eqref{eq:sequenceassumption} that \eqref{eq:weak-star.convergence} is satisfied for any polynomial on $\mathbb{S}^d$ and hence for all $f \in C( \mathbb{S}^d )$, since the polynomials are dense in $ C( \mathbb{S}^d )$. 

\begin{prop} \label{prop:asymptotic.uniformity}
Given $s > d/2$, a sequence of QMC designs for $\mathbb{H}^s( \mathbb{S}^d )$ is asymptotically uniformly distributed on $\mathbb{S}^d$.
\end{prop}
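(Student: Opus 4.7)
The plan is to exploit the worst-case error bound \eqref{eq:Koksma.like} together with the density of spherical polynomials in $C(\mathbb{S}^d)$ via a standard three-epsilon approximation argument. The statement is essentially asserted in the paragraph preceding the proposition, so the proof proposal is really a matter of recording the routine details cleanly.

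First I would note that every spherical polynomial $p$ belongs to $\mathbb{H}^s(\mathbb{S}^d)$ for every $s > 0$, since its Laplace--Fourier series is finite and hence the sum in \eqref{eq:sobcond} reduces to a finite sum. Applied with $f = p$, the bound \eqref{eq:Koksma.like} gives
\begin{equation*}
\bigl| \numint[X_N](p) - \xctint(p) \bigr| \leq \wce(\numint[X_N]; \mathbb{H}^s(\mathbb{S}^d))\, \|p\|_{\mathbb{H}^s},
\end{equation*}
and the QMC design hypothesis forces the worst-case error to be at most $c(s,d)\, N^{-s/d}$, which tends to zero as $N \to \infty$. Hence $\numint[X_N](p) \to \xctint(p)$ for every spherical polynomial $p$.

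Next I would promote this convergence from polynomials to arbitrary $f \in C(\mathbb{S}^d)$ by the standard density argument. The space of spherical polynomials is dense in $C(\mathbb{S}^d)$ with respect to the uniform norm (e.g.\ by the Stone--Weierstrass theorem applied to the algebra of restrictions of polynomials in $d+1$ real variables, which separates points and contains the constants). Given $f \in C(\mathbb{S}^d)$ and $\varepsilon > 0$, choose a spherical polynomial $p$ with $\|f-p\|_\infty < \varepsilon$. Then
\begin{equation*}
\bigl| \numint[X_N](f) - \xctint(f) \bigr| \leq \bigl| \numint[X_N](f-p) \bigr| + \bigl| \numint[X_N](p) - \xctint(p) \bigr| + \bigl| \xctint(p - f) \bigr|,
\end{equation*}
where the first and third terms are bounded by $\|f-p\|_\infty < \varepsilon$ (since $\numint[X_N]$ and $\xctint$ are averages/means), and the middle term tends to $0$ by the polynomial case. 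Letting $N \to \infty$ and then $\varepsilon \to 0$ yields \eqref{eq:weak-star.convergence}, as required.

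There is no substantive obstacle here; the only thing to verify carefully is that spherical polynomials lie in $\mathbb{H}^s(\mathbb{S}^d)$ (so that \eqref{eq:Koksma.like} is applicable) and that they are dense in $C(\mathbb{S}^d)$. Both are standard, and the rest is a one-line $\varepsilon/3$-argument.
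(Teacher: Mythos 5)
Your proposal is correct and follows exactly the route the paper indicates in the paragraph preceding the proposition: apply the bound \eqref{eq:Koksma.like} to spherical polynomials (which lie in $\mathbb{H}^s(\mathbb{S}^d)$ since their Laplace--Fourier expansions are finite), conclude convergence of $\numint[X_N](p)$ to $\xctint(p)$ from the QMC design rate, and then pass to all of $C(\mathbb{S}^d)$ by density of the polynomials. The paper leaves the $\varepsilon/3$-argument implicit; you have merely written it out, so there is nothing to correct.
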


Asymptotic uniformity of point sets is a relatively weak property:
Even if we restrict our attention to $f \in \mathbb{H}^s( \mathbb{S}^d )$ for all $s$, $f$ non-constant, it is possible to construct asymptotically uniformly distributed sequences of node sets so that the convergence of the quadrature error is as slow as one likes. (Without loss of generality, we can restrict attention to smooth functions $f$ whose integral over $\mathbb{S}^d$ is zero.  One can then assign too many points to regions where $f$ is positive and too few where it is negative, correcting the imbalance as $N \to \infty$ as slowly as one wishes.)

Note that asymptotic uniformity of a sequence of point sets does not imply that the point sets have Property R (as one can always add order $o(N)$ arbitrary points to an $N$-point configuration without changing the limit in \eqref{eq:weak-star.convergence}), nor need a sequence of point sets with Property R be asymptotically uniformly distributed (as one can always delete all points inside a fixed spherical cap without any loss to Property R).

\subsection*{Low-discrepancy sequences}

Unlike the situation for the unit cube, on the sphere $\mathbb{S}^d$ there is no single Koksma-Hlawka inequality (cf., e.g., \cite{DrTi1997}), as evidenced by the many competing Koksma-Hlawka like inequalities proposed in the literature (e.g. \cite{CuFr1997}, \cite{BrDi2012a}). The notion of worst-case error (Definition~\ref{def:wce}) provides a way to bound the error of numerical integration for sufficiently smooth functions $f$, see \eqref{eq:Koksma.like}.

The Sobolev space $\mathbb{H}^{(d + 1) / 2}( \mathbb{S}^d )$ with \eqref{eq:K.C.s} as reproducing kernel is special in the sense that the worst-case error $\wce(\numint[X_N]; \mathbb{H}^{(d + 1) / 2}( \mathbb{S}^d ) )$ of a QMC rule with node set $X_N = \{ \PT{x}_1, \dots, \PT{x}_N \}$ has an interpretation as the {\em spherical cap $\mathbb{L}_2$-discrepancy} of $X_N$, defined by
\begin{equation} \label{eq:IL2-discrepancy}
D_{\IL_{2}}^{C}(X_{N}) \DEF \left\{\int_0^\pi \int_{\Sp^{d}} \left|\frac{|X_{N} \cap \mathcal{C}({\PT x}; \theta)|}{N} - \sigma_d( \mathcal{C}({\PT x}; \theta)) \right|^{2} \dd \sigma_d({\PT x}) \, \sin\theta \dd \theta \right\}^{1/2},
\end{equation}
where $\mathcal{C}({\PT x}; \theta)$ denotes a spherical cap as defined in \eqref{eq:spherical.cap}.
Indeed, \emph{Stolarsky's invariance principle} \cite{St1973} asserts that 
\begin{equation} \label{eq:discrepancy.identies}
\left( V_{-1}(\mathbb{S}^d) - \frac{1}{N^2} \sum_{j=1}^N \sum_{i=1}^N \left| \PT{x}_j - \PT{x}_i \right| \right)^{1/2} = \frac{1}{\sqrt{\gamma_d}} D_{\IL_{2}}^{C}(X_N),
\end{equation}
where $V_{-1}( \mathbb{S}^d )$ is given in \eqref{eq:gd.kernel.coeffs} and $\gamma_d$ is given in \eqref{eq:omega.d.ratio}. Recalling equation \eqref{eq:dist.kernel.wce.formula}, we recognize that the left-hand side above is the worst-case error for $\mathbb{H}^{(d+1)/2}( \mathbb{S}^d )$ with kernel $K_{\mathrm{dist}}( \PT{x} \cdot \PT{y} )$ (cf. \eqref{eq:K.C.s}).\footnote{This connection was first noticed in \cite{BrWo2010Manuscript} and rigorously proved in \cite{BrDi2012binpress}.}
As a consequence of \eqref{eq:discrepancy.identies} and \eqref{eq:dist.kernel.wce.formula} 
for $s = (d+1)/2$, we obtain the following corollary to Theorem \ref{thm:firstmax}.
\begin{cor}
Minimizers of the spherical cap $\IL_2$-discrepancy form a sequence of QMC designs for $\mathbb{H}^{(d+1)/2}( \mathbb{S}^d )$.
\end{cor}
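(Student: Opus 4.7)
The plan is to chain together Stolarsky's invariance principle \eqref{eq:discrepancy.identies} with Theorem~\ref{thm:firstmax} applied at the specific value $s = (d+1)/2$. Observe first that $s = (d+1)/2$ lies in the admissible interval $(d/2, d/2+1)$ required by Theorem~\ref{thm:firstmax}, and that the corresponding exponent in the generalized distance is $2s - d = 1$, so the ``generalized sum of distances'' \eqref{eq:generalised.sum.dist} is simply $\sum_{j,i} | \PT{x}_j - \PT{x}_i |$.

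Next I would argue the equivalence of minimization problems. Reading Stolarsky's invariance principle \eqref{eq:discrepancy.identies} from right to left, we have the identity
\begin{equation*}
\frac{1}{\gamma_d} \bigl[ D_{\IL_2}^C(X_N) \bigr]^2 = V_{-1}(\mathbb{S}^d) - \frac{1}{N^2} \sum_{j=1}^N \sum_{i=1}^N \left| \PT{x}_j - \PT{x}_i \right|,
\end{equation*}
in which the first term on the right is a constant depending only on $d$. Consequently, for each fixed $N$, an $N$-point configuration minimizes the spherical cap $\IL_2$-discrepancy $D_{\IL_2}^C(X_N)$ if and only if it maximizes the sum of Euclidean distances $\sum_{j,i} | \PT{x}_j - \PT{x}_i |$.

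Finally I would invoke Theorem~\ref{thm:firstmax} with $s = (d+1)/2$: any sequence $(X_N^*)$ of maximizers of the sum of distances is a sequence of QMC designs for $\mathbb{H}^{(d+1)/2}(\mathbb{S}^d)$. By the equivalence just established, this same sequence minimizes the spherical cap $\IL_2$-discrepancy, which proves the corollary.

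There is really no obstacle here beyond book-keeping, since both ingredients are already in hand; the only thing to be careful about is the direction of optimization (minimizing the worst-case error, which is a nonnegative square root of the quantity above, corresponds to \emph{maximizing} the sum of distances), and the fact that the constant $V_{-1}(\mathbb{S}^d)$ and the positive factor $\gamma_d$ do not alter the set of minimizers. No appeal beyond Theorem~\ref{thm:firstmax} (and hence, no dependence on Theorem~\ref{thm:bondarenko.et.al}) is needed.
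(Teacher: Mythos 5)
Your proof is correct and follows exactly the route the paper intends: it derives the corollary from Stolarsky's invariance principle \eqref{eq:discrepancy.identies} combined with \eqref{eq:dist.kernel.wce.formula} and Theorem~\ref{thm:firstmax} at $s=(d+1)/2$, observing that minimizing the discrepancy is equivalent to maximizing the sum of distances. Nothing is missing.
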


A related concept is that of the {\em spherical cap $\IL_{\infty}$-discrepancy} of an $N$-point set $X_N$ on $\mathbb{S}^d$, defined by
\begin{equation} \label{eq:spherical.cap.discrepancy}
D_{\IL_{\infty}}^{C}(X_N) \DEF \sup\Bigg\{ \left|\frac{| X_N \cap \mathcal{C}|}{N} - \sigma_d( \mathcal{C} ) \right| : \text{$\mathcal{C}$ spherical cap in $\mathbb{S}^d$} \Bigg\}.
\end{equation}
Note that $D_{\IL_{2}}^{C}(X_N) \leq \sqrt{2} \, D_{\IL_{\infty}}^{C}(X_N)$. We shall establish in Section~\ref{sec:proofs} the following. 
%
\begin{prop} \label{prop:Kosma.Hlawka.bound}
Under the assumptions of Proposition~\ref{prop:wce}, given $s \geq (d + 1) / 2$, every $N$-point configuration on $\mathbb{S}^d$ satisfies
\begin{equation} \label{eq:Koksma.Hlawka.inequality}
\wce( \numint[X_N]; \mathbb{H}^{s}( \mathbb{S}^d ) ) \leq c_{s,d} \, D_{\IL_{\infty}}^{C}( X_N ),
\end{equation}
where $c_{s,d} > 0$ depends on the $\mathbb{H}^{s}( \mathbb{S}^d )$-norm, but is independent of $N$.
\end{prop}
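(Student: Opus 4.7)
The plan is to reduce the problem to the distinguished exponent $s=(d+1)/2$ (which is the only value of $s$ at which the worst-case error has a genuine geometric-discrepancy interpretation) and then lift the bound to all larger $s$ via a trivial Sobolev embedding.

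\textbf{Step 1 (base case $s=(d+1)/2$).} First I would work with the specific reproducing kernel $K_{\mathrm{dist}}$ from \eqref{eq:K.C.s}. Proposition~\ref{prop:wce}, in the form \eqref{eq:dist.kernel.wce.formula} with $s=(d+1)/2$, gives
\begin{equation*}
\wce(\numint[X_N]; \mathbb{H}^{(d+1)/2}(\mathbb{S}^d)) = \Bigl( V_{-1}(\mathbb{S}^d) - N^{-2}\textstyle\sum_{j,i}|\PT{x}_j - \PT{x}_i|\Bigr)^{1/2},
\end{equation*}
and Stolarsky's invariance principle \eqref{eq:discrepancy.identies} identifies this quantity with $\gamma_d^{-1/2} D^C_{\IL_2}(X_N)$. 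Bounding the integrand in the definition \eqref{eq:IL2-discrepancy} pointwise by $D^C_{\IL_\infty}(X_N)$ and carrying out $\int_0^\pi \sin\theta\,\dd\theta = 2$ yields $D^C_{\IL_2}(X_N) \leq \sqrt{2}\, D^C_{\IL_\infty}(X_N)$, as already noted in the text. This settles the inequality at $s=(d+1)/2$ for the distance-kernel norm; any other norm of the type \eqref{eq:sobnorm} is equivalent to it and so merely absorbs a finite factor into the constant.

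\textbf{Step 2 (general $s\geq(d+1)/2$).} Next I would invoke the continuous embedding $\mathbb{H}^s(\mathbb{S}^d)\hookrightarrow \mathbb{H}^{(d+1)/2}(\mathbb{S}^d)$. Condition \eqref{eq:sequenceassumption} gives $1/a_\ell^{((d+1)/2)} \asymp (1+\ell)^{d+1} \leq C_1(1+\ell)^{2s} \asymp 1/a_\ell^{(s)}$ for $s\geq (d+1)/2$, so there is a constant $C>0$ with $\|f\|_{\mathbb{H}^{(d+1)/2}} \leq C\|f\|_{\mathbb{H}^s}$ for every $f$. Hence the unit ball of $\mathbb{H}^s(\mathbb{S}^d)$ sits inside $C$ times the unit ball of $\mathbb{H}^{(d+1)/2}(\mathbb{S}^d)$, which immediately yields
\begin{equation*}
\wce(\numint[X_N]; \mathbb{H}^s(\mathbb{S}^d)) \leq C\,\wce(\numint[X_N]; \mathbb{H}^{(d+1)/2}(\mathbb{S}^d)).
\end{equation*}
Composing this with the bound from Step 1 produces \eqref{eq:Koksma.Hlawka.inequality} with $c_{s,d}$ depending on the chosen $\mathbb{H}^s$-norm but not on $N$.

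There is no substantive obstacle in this argument: Stolarsky's invariance principle does the geometric work at $s=(d+1)/2$, and monotonicity on the Sobolev scale takes care of the remaining range. The only point that deserves care is bookkeeping of the equivalence constants between the canonical norm and the particular norm \eqref{eq:sobnorm} being used at each exponent, which is why the final constant $c_{s,d}$ is permitted to depend on the $\mathbb{H}^s(\mathbb{S}^d)$-norm.
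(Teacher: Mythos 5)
Your proposal is correct and follows essentially the same route as the paper: Stolarsky's invariance principle identifies the worst-case error at $s=(d+1)/2$ with the spherical cap $\mathbb{L}_2$-discrepancy, which is bounded by $\sqrt{2}\,D^{C}_{\mathbb{L}_\infty}(X_N)$, and monotonicity of the worst-case error along the Sobolev scale (equivalently, the embedding $\mathbb{H}^s\hookrightarrow\mathbb{H}^{(d+1)/2}$ together with equivalence of norms) handles all $s\geq(d+1)/2$. The paper presents the same two ingredients in a single chain of inequalities, citing \eqref{eq:wce.inequality.A} for the monotonicity step.
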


Thus, node sets with small spherical cap discrepancy are of some interest with regard to numerical integration. We remark that a sequence $(X_N)$ of $N$-point configurations on $\mathbb{S}^d$ is asymptotically uniformly distributed if and only if $D_{\IL_{\infty}}^{C}( X_N ) \to 0$ as $N \to \infty$ (see, e.g., \cite{DrTi1997}).

Beck~\cite{Be1984} proved that there is a positive number $c_1$ such that for {\bf any} $N$-point set $Z_N$ on $\mathbb{S}^d$ there exists a spherical cap $\mathcal{C}_N \subset \mathbb{S}^d$ such that
\begin{equation} \label{eq:Beck.lower.bound}
\frac{c_1}{N^{[(d+1)/2]/d}} < \left| \frac{| Z_N \cap \mathcal{C}_N|}{N} - \sigma_d( \mathcal{C}_N ) \right|
\end{equation}
and, by employing a probabilistic argument, that there exist $c_2>0$ and $N$-point sets $Z_N^*$ on $\mathbb{S}^d$ such that
\begin{equation*}
D_{\IL_{\infty}}^{C}( Z_N^* ) < c_2 \, \frac{\sqrt{\log N}}{N^{[(d+1)/2]/d}} \, .
\end{equation*}
This motivates the following definition.

\begin{defn} \label{def:low.discrepancy.sequences}
A sequence $(Z_N)$ of $N$-point configurations on $\mathbb{S}^d$ is said to be a {\em low-discrepancy sequence}\footnote{This terminology is short for \emph{low spherical cap $\IL_\infty$-discrepancy sequence}.} if there exists a positive number $\beta_d$, independent of $N$, such that for all $Z_N$
\begin{equation} \label{eq:discr.inequality}
D_{\IL_{\infty}}^{C}(Z_N) \leq \beta_d \, \frac{\sqrt{\log N}}{N^{[(d+1)/2]/d}}.
\end{equation}
\end{defn}

Let $(Z_N)$ be a low-discrepancy sequence on $\mathbb{S}^d$. Using \eqref{eq:Koksma.Hlawka.inequality} and Theorem~\ref{thm:wce.lower.bound}, we see that for each $s \geq (d + 1)/2$ 
\begin{equation} \label{eq:low.A}
\frac{\beta_1(s,d)}{N^{s/d}} \leq \wce( Q[Z_N]; \mathbb{H}^s( \mathbb{S}^d) ) \leq c_{s,d} \, D_{\IL_{\infty}}^{C}( Z_N ) \leq \beta_2(s,d) \frac{\sqrt{\log N}}{N^{[(d+1)/2]/d}}.
\end{equation}
On the other hand, if $\wce( Q[Z_N]; \mathbb{H}^s( \mathbb{S}^d) ) < 1$, we have by Lemma~\ref{lem:aux.lem} of Section~\ref{sec:proofs} (with $s$ replaced by $(d+1)/2$ and $s^\prime$ replaced by $s$) that, for each $s < ( d + 1 ) / 2$, 
\begin{equation} \label{eq:low.B}
\frac{\beta_1(s,d)}{N^{s/d}} \leq \wce( Q[Z_N]; \mathbb{H}^s( \mathbb{S}^d) ) \leq \beta_3(s,d) \, \frac{( \log N )^{s/(d+1)}}{N^{s/d}}.
\end{equation}
From \eqref{eq:low.B} we see that for every $s \in (d/2, (d+1)/2)$, low-discrepancy sequences on $\mathbb{S}^d$ have almost optimal order of the worst-case error for $\mathbb{H}^s( \mathbb{S}^d) )$, except for a power of $\log N$.
%
%
This leads to the following natural question: \emph{Do low-discrepancy sequences on $\mathbb{S}^d$ form sequences of QMC designs for $\mathbb{H}^s( \mathbb{S}^d )$ when $s \in (d/2, (d+1)/2)$?}
We remark that to the authors' knowledge no explicit constructions of low-discrepancy configurations on $\mathbb{S}^d$ are known, in contrast to the situation for the unit cube.
\footnote{In \cite{AiBrDi2012arXiv} it is proved that the spherical cap discrepancy of so-called spherical digital nets and spherical Fibonacci points is bounded by $C N^{-1/2}$ (for some explicit $C$), the same rate as for random points.}

\section{QMC designs are better than average}
\label{sec:better.than.average}

As we show in this section, Theorem~\ref{thm:better} is a consequence of a more general result dealing with the average value of the $N$-point energy defined by a positive definite kernel on the sphere.
%
Given a sequence $( a_\ell )_{\ell \ge 1}$ with $a_\ell \geq 0$ and $\sum_{\ell = 1}^\infty a_\ell Z(d, \ell)$ convergent, we set
\begin{equation} \label{eq:A.pts}
\mathcal{A}( \PT{x} \cdot \PT{y} ) \DEF \mathcal{A}( \PT{x}, \PT{y} ) = \sum_{\ell = 1}^\infty a_\ell \sum_{k=1}^{Z(d,\ell)} Y_{\ell,k}( \PT{x} )  Y_{\ell,k}( \PT{y} ) = \sum_{\ell = 1}^\infty a_\ell Z(d, \ell) P_\ell^{(d)}( \PT{x} \cdot \PT{y} ), \quad \PT{x}, \PT{y} \in \mathbb{S}^d.
\end{equation}
Then $\mathcal{A}$ is a positive definite kernel on $\mathbb{S}^d$ in the sense of Schoenberg~\cite{Sch1942}; indeed, from the addition theorem, for all $N$-point configurations $X_N = \{ \PT{x}_1, \dots, \PT{x}_N \} \subset \mathbb{S}^d$ and all real numbers $\alpha_1, \dots, \alpha_N$,
\begin{equation*}
\sum_{j=1}^N \sum_{i=1}^N \alpha_j \, \mathcal{A}( \PT{x}_j \cdot \PT{x}_i ) \, \alpha_i = \sum_{\ell = 1}^\infty a_\ell \sum_{k=1}^{Z(d,\ell)} \left| \sum_{j=1}^N \alpha_j Y_{\ell,k}( \PT{x}_j ) \right|^2 \geq 0.
\end{equation*}
Consequently, the normalized $N$-point energy of this kernel, defined by 
\begin{equation} \label{eq:A.energy}
\mathcal{A}[ X_N ] \DEF \frac{1}{N^2} \sum_{j=1}^N \sum_{i=1}^N \mathcal{A}( \PT{x}_j \cdot \PT{x}_i ), \qquad X_N = \{ \PT{x}_1, \dots, \PT{x}_N \} \subset \mathbb{S}^d,
\end{equation}
satisfies $\mathcal{A}[ X_N ] \geq 0$.

We are interested in the expected value of the energy \eqref{eq:A.energy} for $N$ random points that are chosen independently and identically distributed with respect to the uniform measure on $\mathbb{S}^d$; namely 
\begin{equation*}
\mathbb{E} \mathcal{A}[X_N] \DEF \int_{\mathbb{S}^d} \cdots \int_{\mathbb{S}^d} \mathcal{A}[ X_N ] \dd \sigma_d( \PT{x}_1 ) \cdots \dd \sigma_d( \PT{x}_N ),
\end{equation*}
which is also known as the {\em spherical average} of $\mathcal{A}[X_N]$ over all $N$-point sets $X_N \subset \mathbb{S}^d$. A straightforward computation yields the following result (see Section~\ref{sec:proofs}).

\begin{thm} \label{thm:average} 
Given the kernel $\mathcal{A}$ as in \eqref{eq:A.pts}, 
\begin{equation} \label{eq:spherical.average}
\mathbb{E} \mathcal{A}[X_N] = \frac{\mathcal{A}(1)}{N}.
\end{equation}
\end{thm}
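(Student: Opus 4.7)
The plan is to split the double sum defining $\mathcal{A}[X_N]$ into its diagonal ($j = i$) and off-diagonal ($j \neq i$) contributions and evaluate the expectation of each piece separately, exploiting that the points $\PT{x}_1, \dots, \PT{x}_N$ are i.i.d.\ uniform on $\mathbb{S}^d$.

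First I would write
\begin{equation*}
\mathcal{A}[X_N] = \frac{1}{N^2} \sum_{j=1}^N \mathcal{A}( \PT{x}_j \cdot \PT{x}_j ) + \frac{1}{N^2} \sum_{\substack{j,i=1 \\ j \neq i}}^N \mathcal{A}( \PT{x}_j \cdot \PT{x}_i ).
\end{equation*}
For the diagonal, since $\PT{x}_j \cdot \PT{x}_j = 1$ and $P_\ell^{(d)}(1) = 1$, each term equals the constant $\mathcal{A}(1) = \sum_{\ell=1}^\infty a_\ell Z(d,\ell)$, which is finite by assumption. Hence the diagonal contributes exactly $\mathcal{A}(1)/N$ to $\mathcal{A}[X_N]$, and therefore the same amount in expectation.

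For each off-diagonal pair $j \neq i$, independence of $\PT{x}_j$ and $\PT{x}_i$ gives
\begin{equation*}
\mathbb{E}\, \mathcal{A}( \PT{x}_j \cdot \PT{x}_i ) = \int_{\mathbb{S}^d} \int_{\mathbb{S}^d} \mathcal{A}( \PT{x} \cdot \PT{y} ) \, \dd \sigma_d(\PT{x}) \, \dd \sigma_d(\PT{y}).
\end{equation*}
Using the spherical harmonic expansion of $\mathcal{A}$ and Fubini (justified since $\sum_\ell a_\ell Z(d,\ell)$ converges absolutely and $\sigma_d$ is a probability measure), this becomes
\begin{equation*}
\sum_{\ell=1}^\infty a_\ell \sum_{k=1}^{Z(d,\ell)} \left( \int_{\mathbb{S}^d} Y_{\ell,k}(\PT{x}) \, \dd \sigma_d(\PT{x}) \right) \left( \int_{\mathbb{S}^d} Y_{\ell,k}(\PT{y}) \, \dd \sigma_d(\PT{y}) \right) = 0,
\end{equation*}
because the integral of any spherical harmonic of degree $\ell \geq 1$ against $\sigma_d$ vanishes (by orthogonality to the constant $Y_{0,1}$). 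Equivalently, one may note that $\int_{\mathbb{S}^d} P_\ell^{(d)}(\PT{x} \cdot \PT{y}) \, \dd \sigma_d(\PT{y}) = 0$ for $\ell \geq 1$, which is the same fact.

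Summing the two contributions yields $\mathbb{E}\, \mathcal{A}[X_N] = \mathcal{A}(1)/N$. The only genuine issue is justifying the interchange of the infinite sum and the double integral; absolute convergence of $\sum_\ell a_\ell Z(d,\ell)$ together with $|P_\ell^{(d)}(t)| \leq 1$ on $[-1,1]$ suffices, so this is a routine application of dominated convergence / Fubini and is the only step requiring any care.
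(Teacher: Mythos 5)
Your proof is correct and follows essentially the same route as the paper: split the energy into diagonal and off-diagonal parts, note the diagonal contributes exactly $\mathcal{A}(1)/N$, and use independence plus $\int_{\mathbb{S}^d} Y_{\ell,k}\, \dd\sigma_d = 0$ for $\ell \geq 1$ to kill the off-diagonal expectation. The only (immaterial) difference is in justifying the interchange of sum and integral: you invoke dominated convergence via $|P_\ell^{(d)}(t)| \leq 1$ and the absolute convergence of $\sum_\ell a_\ell Z(d,\ell)$, whereas the paper works with the truncations $\mathcal{A}^L$ and passes to the limit by monotone convergence using the positive-definiteness of each term.
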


\begin{rmk}
Theorem~\ref{thm:average} generalizes the result \cite[Theorem~6]{SlWo2009}.
\end{rmk}

We now apply this result to the positive definite kernel $\mathcal{K}^{(s)}$ associated with the reproducing kernel $K^{(s)}$ as given in \eqref{eq:K} for $\mathbb{H}^s( \mathbb{S}^d )$ with $s > d/2$. By Proposition~\ref{prop:wce} 
\begin{equation*}
\mathcal{K}^{(s)}[X_N] = \frac{1}{N^2} \sum_{j=1}^N \sum_{i=1}^N \mathcal{K}^{(s)}({\PT x}_j \cdot {\PT x}_i) = \left[ \wce( \numint[X_N]; \mathbb{H}^s(\mathbb{S}^d) ) \right]^2  ,
\end{equation*}
and hence by Theorem~\ref{thm:average} we obtain that the expected value of the squared worst-case error is given by $\mathcal{K}^{(s)}(1) / N$, from which  Theorem~\ref{thm:better} follows. Consequently, sequences of randomly chosen points on the sphere do not generate QMC designs for $s > d/2$.

By the same token, Theorem~\ref{thm:average} applied to $\mathcal{A} = \mathcal{K}_t^{(s)}$ as given in \eqref{eq:K.t} yields the following.
\begin{prop}
Given $s > d/2$, for any positive integers $t$ and $N$ (not necessarily depending on each other)
\begin{equation*}
\mathbb{E}\Bigg[ \frac{1}{N^2} \sum_{j=1}^N \sum_{i=1}^N \mathcal{K}_t^{(s)}( \PT{x}_j \cdot \PT{x}_i ) \Bigg] = \frac{\mathcal{K}_t^{(s)}(1)}{N}. 
\end{equation*}
Furthermore, $\mathcal{K}_t^{(s)}(1) \nearrow \mathcal{K}^{(s)}(1)$ as $t \to \infty$. 
\end{prop}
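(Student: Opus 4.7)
The plan is to obtain the identity as an immediate specialization of Theorem~\ref{thm:average} and then to handle the monotone-convergence statement by a direct termwise inspection. First I would observe that the truncated kernel $\mathcal{K}_t^{(s)}$ defined in \eqref{eq:K.t} is precisely of the form \eqref{eq:A.pts} obtained by setting $a_\ell = a_\ell^{(s)}$ for $1 \leq \ell \leq t$ and $a_\ell = 0$ for $\ell > t$. Since the resulting expansion is a finite sum, the convergence hypothesis $\sum_{\ell \geq 1} a_\ell Z(d,\ell) < \infty$ in Theorem~\ref{thm:average} is trivially satisfied, and nonnegativity of the $a_\ell$'s is clear from \eqref{eq:sequenceassumption}. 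Applying \eqref{eq:spherical.average} to $\mathcal{A} = \mathcal{K}_t^{(s)}$ then yields
\[
\mathbb{E}\Bigg[ \frac{1}{N^2} \sum_{j=1}^N \sum_{i=1}^N \mathcal{K}_t^{(s)}(\PT{x}_j \cdot \PT{x}_i) \Bigg] = \frac{\mathcal{K}_t^{(s)}(1)}{N},
\]
which is the first assertion.

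For the monotone-convergence claim I would use the normalization $P_\ell^{(d)}(1) = 1$ recalled just after \eqref{eq:addition.theorem} to evaluate
\[
\mathcal{K}_t^{(s)}(1) = \sum_{\ell=1}^t a_\ell^{(s)} \, Z(d,\ell).
\]
Since each coefficient $a_\ell^{(s)}$ is strictly positive by \eqref{eq:sequenceassumption} and $Z(d,\ell) > 0$, this is a partial sum of positive terms, so $t \mapsto \mathcal{K}_t^{(s)}(1)$ is strictly increasing, and its limit as $t \to \infty$ is by definition $\mathcal{K}^{(s)}(1)$.

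The only point where a genuine argument is needed --- and the place where the hypothesis $s > d/2$ actually enters --- is the finiteness of this monotone limit. By \eqref{eq:sequenceassumption} one has $a_\ell^{(s)} \asymp (1+\ell)^{-2s}$, while \eqref{eq:Z.d.n} gives $Z(d,\ell) \sim (2/\Gamma(d))\,\ell^{d-1}$, so that $a_\ell^{(s)} Z(d,\ell) \asymp \ell^{d-1-2s}$ as $\ell \to \infty$. The corresponding series converges precisely when $2s > d$, which is exactly the standing assumption. Hence the monotone limit is finite and equals $\mathcal{K}^{(s)}(1)$, completing the proof. I do not anticipate a serious obstacle along the way: the whole statement is essentially a direct corollary of Theorem~\ref{thm:average} combined with the standard summability bookkeeping built into the definition of a reproducing kernel for $\mathbb{H}^s(\mathbb{S}^d)$.
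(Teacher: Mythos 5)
Your proposal is correct and follows exactly the paper's route: the paper derives this proposition in one line by applying Theorem~\ref{thm:average} to $\mathcal{A} = \mathcal{K}_t^{(s)}$, and your verification of the hypotheses plus the termwise check that $\mathcal{K}_t^{(s)}(1) = \sum_{\ell=1}^t a_\ell^{(s)} Z(d,\ell)$ increases to the convergent series $\mathcal{K}^{(s)}(1)$ (finite precisely because $a_\ell^{(s)} Z(d,\ell) \asymp \ell^{d-1-2s}$ with $2s>d$) simply makes explicit what the paper leaves implicit.
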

The implication of the first part of this result is that for a sequence of random $N$-point configurations on $\mathbb{S}^d$, the rate of convergence of the square-bracketed expression above is of order $N^{-1}$ regardless of the choice of $t$. We remark that, in contrast, for spherical $t$-designs the square-bracketed expression above vanishes.

If, instead of choosing points randomly over the whole sphere, we stratify our approach by requiring that the $N$ points be randomly chosen from $N$ different equal area subsets of $\mathbb{S}^d$ having small diameter, then on average we will obtain a sequence of QMC designs for $\mathbb{H}^s( \mathbb{S}^d )$ whenever $s \in (d/2, d/2+1)$. In the formal statement of this result we denote by 
\begin{equation*}
\diam A \DEF \sup \big\{ \left| \PT{x} - \PT{y} \right| : \PT{x}, \PT{y} \in A \big\}
\end{equation*}
the diameter of the set $A$.

\begin{thm} \label{thm:equal.area}
Let $(\mathcal{D}_N)$ be a sequence of partitions of $\mathbb{S}^d$ into $N$ equal area subsets $D_{j,N}$, $j = 1, \dots, N$, such that $\diam D_{j,N} \leq c / N^{1/d}$, where $c$ is a positive constant independent of $j$ and $N$. Let $X_N = \{ \PT{x}_{1,N}, \dots, \PT{x}_{N,N} \}$, where $\PT{x}_{j,N}$ is chosen randomly from $D_{j,N}$ with respect to uniform measure on $D_{j,N}$. Then, for $d/2 < s < d/2 + 1$,
\begin{equation} \label{eq:E.wce.equal.area}
\frac{\beta^\prime}{N^{s/d}} \leq \sqrt{\mathbb{E} \big[ \left\{ \wce( \numint[X_N]; \mathbb{H}^s( \mathbb{S}^d ) ) \right\}^2 \big]} \leq \frac{\beta}{N^{s/d}},
\end{equation}
where $\beta^\prime > 0$ and $\beta > 0$ depend on the $\mathbb{H}^s( \mathbb{S}^d )$-norm, but are independent of $N$, and $\beta > 0$ also depends on $(\mathcal{D}_N)$.
\end{thm}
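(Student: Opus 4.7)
The plan is to exploit the generalized distance kernel $K_{\rm{gd}}^{(s)}$ from Section~\ref{sec:generalized.distance.kernel}: because $d/2 < s < d/2+1$ we are in Case~I, where $K_{\rm{gd}}^{(s)}$ is a genuine reproducing kernel for $\mathbb{H}^s(\mathbb{S}^d)$. Equivalence of Sobolev norms lets us establish both bounds with respect to the norm induced by $K_{\rm{gd}}^{(s)}$, with the norm-equivalence constants absorbed into $\beta$ and $\beta'$. Formula \eqref{eq:dist.kernel.wce.formula} then gives the closed form
\[ \left[\wce(\numint[X_N]; \mathbb{H}^s(\mathbb{S}^d))\right]^2 = V_{d-2s}(\mathbb{S}^d) - \frac{1}{N^2}\sum_{j=1}^N\sum_{i=1}^N \left|\PT{x}_{j,N} - \PT{x}_{i,N}\right|^{2s-d}. \]

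Next I would take expectations termwise. Since $2s-d>0$, the diagonal terms $j=i$ contribute zero; for $j\neq i$, the independence of $\PT{x}_{j,N}$ and $\PT{x}_{i,N}$ combined with uniform sampling on $D_{j,N}$ (of normalized area $1/N$) gives
\[ \mathbb{E}\left|\PT{x}_{j,N} - \PT{x}_{i,N}\right|^{2s-d} = N^2 \int_{D_{j,N}}\!\int_{D_{i,N}} \left|\PT{x} - \PT{y}\right|^{2s-d}\, \mathrm{d}\sigma_d(\PT{x})\, \mathrm{d}\sigma_d(\PT{y}). \]
Summing over $j\neq i$ and completing the off-diagonal double sum to the full $\mathbb{S}^d\times\mathbb{S}^d$ integral, the term $V_{d-2s}(\mathbb{S}^d)$ cancels, leaving the clean identity
\[ \mathbb{E}\!\left[\left\{\wce(\numint[X_N]; \mathbb{H}^s(\mathbb{S}^d))\right\}^2\right] = \sum_{j=1}^N \int_{D_{j,N}}\!\int_{D_{j,N}} \left|\PT{x} - \PT{y}\right|^{2s-d}\, \mathrm{d}\sigma_d(\PT{x})\, \mathrm{d}\sigma_d(\PT{y}). \]

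The upper bound is now immediate: for $\PT{x},\PT{y}\in D_{j,N}$ the diameter hypothesis gives $|\PT{x}-\PT{y}|^{2s-d}\leq (c/N^{1/d})^{2s-d}$ (using $2s-d>0$), and $\sigma_d(D_{j,N})=1/N$, so each cell contributes at most $c^{2s-d}N^{-(2s-d)/d-2}$. Summing over the $N$ cells and simplifying $(2s-d)/d+1=2s/d$ produces $\mathbb{E}[\wce^2]\leq c^{2s-d}N^{-2s/d}$, whence the stated bound with constant $\beta$. The lower bound requires no new work: Theorem~\ref{thm:wce.lower.bound} gives $\wce\geq c'(s,d)/N^{s/d}$ for \emph{every} deterministic configuration, and squaring and taking expectation preserves the inequality. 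There is no serious obstacle; the one delicate point is that $2s-d$ must be strictly positive both to kill the diagonal and to let the naive diameter estimate be applied without worrying about integrability near $\PT{x}=\PT{y}$, which is exactly what the hypothesis $s>d/2$ secures.
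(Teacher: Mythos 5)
Your proposal is correct and follows essentially the same route as the paper's proof: both use the closed form \eqref{eq:dist.kernel.wce.formula} for the generalized distance kernel, take expectations over the product of the cell measures, complete the off-diagonal sum to the full integral so that $V_{d-2s}(\mathbb{S}^d)$ cancels, bound the remaining diagonal cell integrals by the diameter hypothesis, and invoke Theorem~\ref{thm:wce.lower.bound} for the lower bound. The only cosmetic difference is that the paper works with the normalized probability measures $\mu_{j,N}$ on each cell (carrying a $1/N^2$ prefactor) rather than with $\sigma_d$ restricted to the cells, which is an equivalent bookkeeping choice.
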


We remark that such a sequence of partitions always exists; see \cite{BeCh2008, BoLi1998, Le2009, RaSaZh1994}.

As the next result shows, the stratification strategy does not lead however, on average, to QMC designs for $\mathbb{H}^s( \mathbb{S}^d )$ with $s > d/2 + 1$.

\begin{thm} \label{thm:equal.area.lower.bound}
Let $(\mathcal{D}_N)$ and $X_N$ be as in Theorem~\ref{thm:equal.area}.
Then for $s > d/2 + 1$, with $2 s - d$ not an even integer,\footnote{A similar result for $2s - d$ an even integer can be obtained using an appropriate kernel as discussed in Section~\ref{sec:generalized.distance.kernel}.}
\begin{equation*}
\sqrt{ \mathbb{E} \big[ \left\{ \wce( \numint[X_N]; \mathbb{H}^s( \mathbb{S}^d ) ) \right\}^2 \big] } \geq \frac{\beta}{N^{(d/2+1)/d}},
\end{equation*}
where $\beta > 0$ depends on the $\mathbb{H}^s( \mathbb{S}^d )$-norm
and $(\mathcal{D}_N)$, but is independent of $N$.
\end{thm}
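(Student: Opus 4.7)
The plan is to show that even a single spherical-harmonic level suffices to witness the slow decay. By Proposition~\ref{prop:wce}, since every summand in the series expansion of $[\wce(\numint[X_N]; \mathbb{H}^s(\mathbb{S}^d))]^2$ is non-negative, truncating to $\ell = 1$ and applying the addition theorem together with $P_1^{(d)}(t) = t$ and $Z(d,1) = d+1$ yields
\begin{equation*}
\left[\wce(\numint[X_N]; \mathbb{H}^s(\mathbb{S}^d))\right]^2 \geq a_1^{(s)} \sum_{k=1}^{Z(d,1)} \left|\frac{1}{N}\sum_{j=1}^N Y_{1,k}(\PT{x}_{j,N})\right|^2 = \frac{a_1^{(s)}(d+1)}{N^2}\left|\sum_{j=1}^N \PT{x}_{j,N}\right|^2.
\end{equation*}

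Taking expectations, I would exploit the independence of the randomly chosen points. Writing $\bar{\PT{x}}_j \DEF N \int_{D_{j,N}}\PT{x}\,\dd\sigma_d(\PT{x}) = \mathbb{E}[\PT{x}_{j,N}]$ for the conditional mean (so that $|\bar{\PT{x}}_j| \leq 1$), and using $|\PT{x}_{j,N}| = 1$ almost surely, I obtain
\begin{equation*}
\mathbb{E}\left[\left|\sum_j \PT{x}_{j,N}\right|^2\right] = N + \sum_{j \neq i}\bar{\PT{x}}_j \cdot \bar{\PT{x}}_i = N + \left|\sum_j \bar{\PT{x}}_j\right|^2 - \sum_j|\bar{\PT{x}}_j|^2 = \sum_j\left(1 - |\bar{\PT{x}}_j|^2\right),
\end{equation*}
where the final equality uses $\sum_j \bar{\PT{x}}_j = N \int_{\mathbb{S}^d}\PT{x}\,\dd\sigma_d(\PT{x}) = 0$ by central symmetry of $\sigma_d$.

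The crux of the proof is then the uniform isoperimetric lower bound
\begin{equation*}
1 - |\bar{\PT{x}}_j|^2 \geq c_d\, N^{-2/d}, \qquad j = 1, \dots, N,
\end{equation*}
valid for every cell of normalized area $1/N$ when $N$ is sufficiently large. I would establish it by spherical cap rearrangement: with $\PT{e}_j \DEF \bar{\PT{x}}_j / |\bar{\PT{x}}_j|$, the average of the linear functional $\PT{x} \cdot \PT{e}_j$ over $D_{j,N}$ is maximized when $D_{j,N}$ is replaced by the spherical cap $\mathcal{C}(\PT{e}_j; r_N)$ of the same area (whence $r_N \asymp N^{-1/d}$), and an elementary calculation in polar coordinates on the cap gives $1 - (\bar{\PT{x}}_{\mathcal{C}} \cdot \PT{e}_j)^2 \asymp N^{-2/d}$. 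Combining the three steps yields
\begin{equation*}
\mathbb{E}\left[\left\{\wce(\numint[X_N]; \mathbb{H}^s(\mathbb{S}^d))\right\}^2\right] \geq a_1^{(s)}(d+1) c_d \, N^{-1-2/d},
\end{equation*}
and taking square roots delivers the claim with $\beta = \sqrt{a_1^{(s)}(d+1) c_d}$, since $(1+2/d)/2 = (d/2+1)/d$. The main obstacle is the isoperimetric rearrangement step, though this is a classical consequence of cap symmetrization on $\mathbb{S}^d$. I note that the argument uses only the strict positivity of $a_1^{(s)}$ in the reproducing-kernel expansion for $\mathbb{H}^s(\mathbb{S}^d)$, so the hypothesis ``$2s - d$ not an even integer'' plays no essential role in this approach.
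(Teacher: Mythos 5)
Your argument is correct, and it takes a genuinely different route from the paper's. The paper works with the generalized distance kernel $K^{(s)}_{\rm{gd}}$ of \eqref{eq:K.gd.general}: it reduces the expected squared worst-case error to a sum of within-cell integrals of $\mathcal{Q}_L(1)-\mathcal{Q}_L(\PT{x}\cdot\PT{y})+(-1)^{L+1}|\PT{x}-\PT{y}|^{2s-d}$, expands $\mathcal{Q}_L(1)-\mathcal{Q}_L(\PT{x}\cdot\PT{y})$ in even powers of $|\PT{x}-\PT{y}|$ via a hypergeometric identity, checks that the coefficient of $|\PT{x}-\PT{y}|^2$ is positive, and then lower-bounds $\int_{D_{j,N}}\int_{D_{j,N}}|\PT{x}-\PT{y}|^2\,\dd\mu_{j,N}\,\dd\mu_{j,N}$ by showing (via a cap-area comparison) that a fixed fraction of the pairs in each cell are at distance $\gtrsim N^{-1/d}$. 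You instead discard everything except the degree-one harmonics in the spectral formula \eqref{eq:earlywce}, compute $\mathbb{E}\,|\sum_j\PT{x}_{j,N}|^2=\sum_j(1-|\bar{\PT{x}}_j|^2)$ exactly by independence and the vanishing of $\sum_j\bar{\PT{x}}_j$, and bound each centroid away from the sphere by the bathtub principle (for the linear functional $\PT{x}\mapsto\PT{x}\cdot\PT{e}_j$ the super-level set of measure $1/N$ is a cap, so no full symmetrization machinery is needed). The two key geometric estimates are in fact the same quantity in disguise, since $\int\!\!\int|\PT{x}-\PT{y}|^2\,\dd\mu_{j,N}\,\dd\mu_{j,N}=2(1-|\bar{\PT{x}}_j|^2)$, but your packaging buys real simplifications: it needs only the positivity of $a_1^{(s)}$ in an arbitrary admissible kernel, so the hypothesis that $2s-d$ is not an even integer becomes superfluous (rendering the paper's footnote unnecessary), it extends verbatim to all $s>d/2$, and it avoids the $\mathcal{Q}_L$ expansion entirely. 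What the paper's route buys in exchange is uniformity of presentation: the same exact distance-kernel identity underlies both the upper bound of Theorem~\ref{thm:equal.area} and this lower bound, with only the sign pattern of the $\alpha_\ell^{(s)}$ distinguishing the two regimes. The one point you should make explicit when writing this up is the cap computation itself, namely that a cap of normalized area $1/N$ has centroid norm $1-\Theta(N^{-2/d})$; this is elementary in the polar coordinate $u=1-t$ against the weight $(1-t^2)^{d/2-1}$, and together with $|\bar{\PT{x}}_j|\le|\bar{\PT{x}}_{\mathcal{C}}|$ it closes the argument.
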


Other concepts of randomness can be also considered. Armentano, Beltr{\'a}n, and Shub~\cite{ArBeSh2011} study point configurations on $\mathbb{S}^2$ that are derived from the zeros of random polynomials. This will be a topic of future research.

\section{Numerical Experiments}
\label{sec:numerics}

\subsection{Point sets}

Many different sequences of point sets are potential candidates to be QMC design sequences
for $\mathbb{H}^s( \mathbb{S}^2 )$ and some $s > 1$.
We consider the following point sets $X_N$.
\begin{itemize}
\item Pseudo-random points, uniformly distributed on the sphere.
\item Equal area points based on an algorithm given in \cite{RaSaZh1994}.
\item Fekete points which maximize the determinant for polynomial interpolation \cite{SlWo2004}.
\item Coulomb energy points, which minimize
\begin{equation*}
\sum_{j=1}^N \sum_{i=1}^N \frac{1}{\left| \PT{x}_j - \PT{x}_i \right|}.
\end{equation*}
\item Log energy points, which minimize
\begin{equation*}
\sum_{j=1}^N \sum_{i=1}^N \log \frac{1}{\left| \PT{x}_j - \PT{x}_i \right|} .
\end{equation*}
\item Generalized spiral points (cf. \cite{RaSaZh1994,Ba2000}),
with spherical coordinates $(\theta_j, \phi_j)$ given by
\begin{equation*}
z_j = 1 - \frac{2j-1}{N}, \quad \theta_j = \cos^{-1}(z_j), \quad \phi_j = 1.8 \sqrt{N}\theta_j \mod 2\pi,
 \quad j = 1,\ldots,N.
\end{equation*}
\item Distance points, which maximize 
\begin{equation*}
\sum_{j=1}^N \sum_{i=1}^N \left| \PT{x}_j - \PT{x}_i \right|.
\end{equation*}
\item Spherical $t$-designs with $N = \lceil (t+1)^2/2 \rceil + 1$ points.
\end{itemize}
All the point sets that are characterized by optimizing a criterion are faced with the
difficulty of many local optima.
Thus, for larger values of $N$, these point sets have objective values near,
but not necessarily equal to, the global optimum.

QMC designs for  $\mathbb{H}^s\left( \mathbb{S}^2 \right)$, for every $s > 1$ ($s$ not an integer),
could be calculated by optimizing the expression \eqref{eq:wce.4.K.dg.kernel} in terms of the generalized distance
$|\PT{x}_j - \PT{x}_i|^{2s-2}$
(including the low order polynomial $\mathcal{Q}_L$ or by imposing the additional constraints
that the point set is an $L$-design), as discussed in Section~\ref{sec:generalized.distance.kernel}.
We restrict attention to the point sets listed above, which are available from the website \cite{Wo2012www}.
Here the criterion \eqref{eq:wce.4.K.dg.kernel} is used to generate points only for the case $s = 3/2$
(maximizing distance sums).
In all cases, except the last, the number of points $N$ was taken to be a perfect square.

\subsection{Worst case error}

\begin{figure}[h!t]
\begin{center}
\includegraphics[scale=0.9]{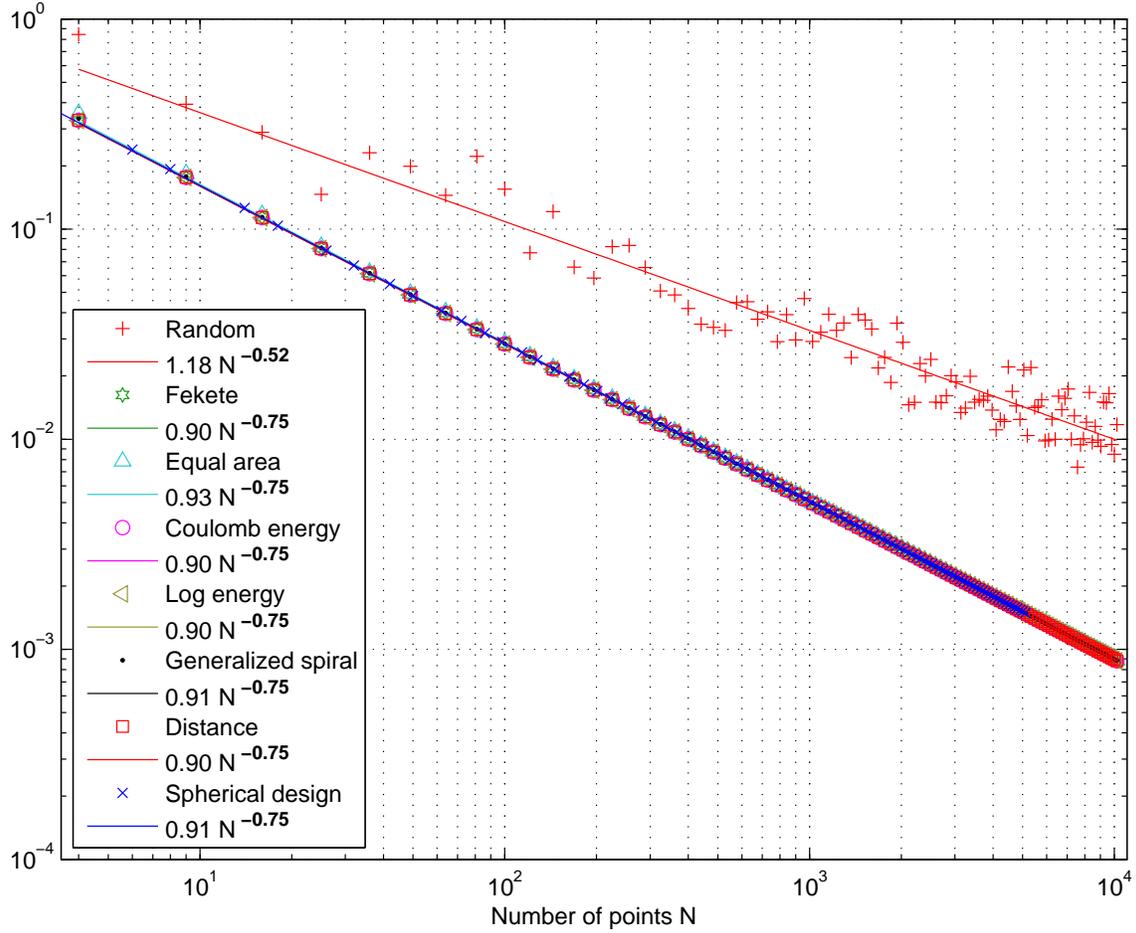}
\caption{Worst case error for $\mathbb{H}^s( \mathbb{S}^2 )$ and $s = 3/2$}
\label{F:wce15}
\end{center}
\end{figure}

The worst-case errors in $\mathbb{H}^s \left( \mathbb{S}^2\right)$ for $s = 3/2$
are illustrated in Figure~\ref{F:wce15}.
For all point sets, the worst case error with $s = 3/2$ is calculated
using \eqref{eq:dist.kernel.wce.formula} and the distance kernel.
Apart from the random points (which are not QMC designs, see Theorem~\ref{thm:better})
all the point sets have a worst-case error for $s = 3/2$ that seems to decay like $N^{-3/4}$, 
implying that they are all QMC designs for $s = 3/2$.

Spherical designs with $N = \mathcal{O}(t^2)$ are QMC designs for
$\mathbb{H}^s\left( \mathbb{S}^2 \right)$ and all $s > 1$.
From Section~\ref{sec:generalized.distance.kernel} and Theorem~\ref{thm:firstmax},
the distance points are provably QMC designs for $s = 1.5$.
The rate of decay for equal area points fits well with Theorem~\ref{thm:equal.area},
which established that randomized equal area points are also QMC designs for $1 < s < 2$.
Moreover, from Theorem~\ref{thm:equal.area.lower.bound},
randomized equal area points cannot do better than this.
Other than the distance points,
it has yet to be established rigorously that the non-random point sets
are QMC designs for $s = 1.5$.
It is rather curious that, for the non-random sequences,
the computed worst case errors in Figure~\ref{F:wce15} essentially lie on the same curve.

\begin{figure}[h!t]
\begin{center}
\includegraphics[scale=0.9]{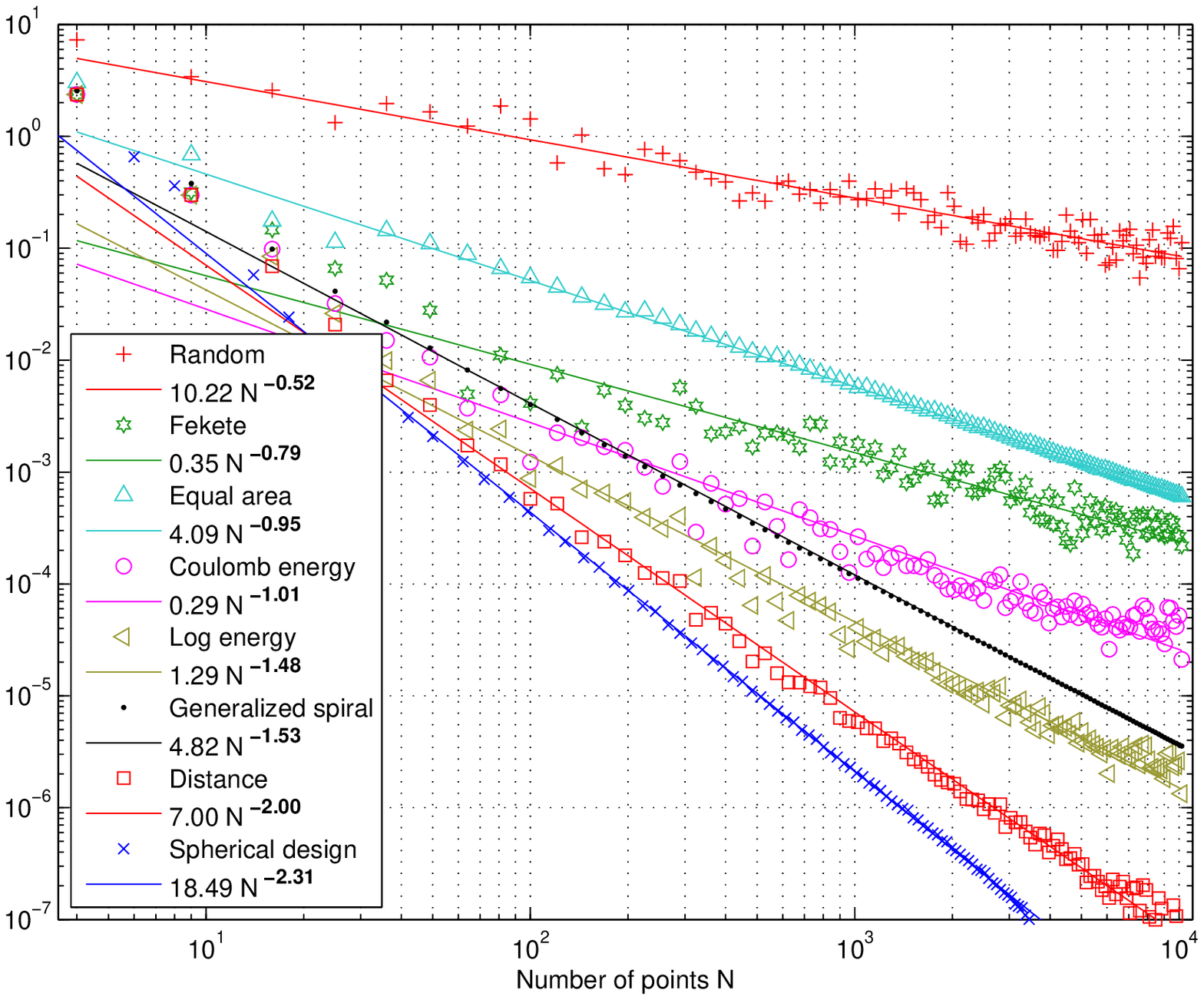}
\caption{Worst case error for $\mathbb{H}^s( \mathbb{S}^2 )$ and $s = 4.5$}
\label{F:wce45}
\end{center}
\end{figure}

Figure~\ref{F:wce45} plots the worst-case errors for $s = 4.5$ and estimates the
rate of decay by finding a least squares fit of the form $\alpha N^{-\beta}$
for $10 \leq N \leq 10^4$ (except for the spherical designs which use $10\leq N \leq 5 \times 10^3$).
For all point sequences the worst case error with $s = 4.5$ is calculated
using the generalized distance kernel and \eqref{eq:wce.4.K.dg.kernel} (for which $L = 3$).
As expected from Theorem~\ref{thm:better}, for random points the worst case error still decays like $N^{-1/2}$.

All the point sets, except for the spherical designs,
exhibit varying rates of decay slower than $\mathcal{O}( N^{-s/2} ) = \mathcal{O}( N^{-2.25} )$,
indicating that their effectiveness for equal weight numerical integration
on $\mathbb{H}^s( \mathbb{S}^2 )$ when $s \geq 4.5$
is less than optimal; cf. Theorem~\ref{thm:best.rate}.

\subsection{Integrating a smooth function}
One expects from the error bound \eqref{eq:Koksma.like}
that a putative sequence $(X_N)$ of QMC designs for
$\mathbb{H}^s( \mathbb{S}^2 )$, $s > 1$,
will play out its full strength when tested with a smooth test function.
Some caution is needed in the choice of this function
in order to avoid having an integrand that is accidentally too easy.
Our choice is the {\em Franke} function for the sphere \cite{Re1988} defined by
\begin{align*}
f\left( x,y,z \right)
  &\DEF  0.75\exp (-(9x-2)^{2}/4-(9y-2)^{2}/4-(9z-2)^{2}/4) \\
  &\phantom{=} +0.75\exp (-(9x+1)^{2}/49-(9y+1)/10-(9z+1)/10) \\
  &\phantom{=} +0.5\exp (-(9x-7)^{2}/4-(9y-3)^{2}/4-(9z-5)^{2}/4) \\
  &\phantom{=} -0.2\exp (-(9x-4)^{2}-(9y-7)^{2}-(9z-5)^{2}), \qquad (x,y,z)^T \in\mathbb{S}^2,
\end{align*}
which is in $C^{\infty}(\mathbb{S}^{2})$ and for which
\begin{equation*}
\int_{\mathbb{S}^2} f(\PT{x}) \mathrm{d} \sigma_2( \PT{x} ) =  0.5328652500843890\ldots .
\end{equation*}
As $f \in \mathbb{H}^s(\mathbb{S}^2)$ for all $s > 1$,
the integration error for a particular sequence $(X_N)$ of QMC designs
with $s^*$ given by \eqref{eq:s.star},
must decay at least as fast as $O\left(N^{-s^*/2 + \epsilon}\right)$
for any $\epsilon > 0$.
\begin{figure}[h!t]
\begin{center}
  \includegraphics[scale=0.9]{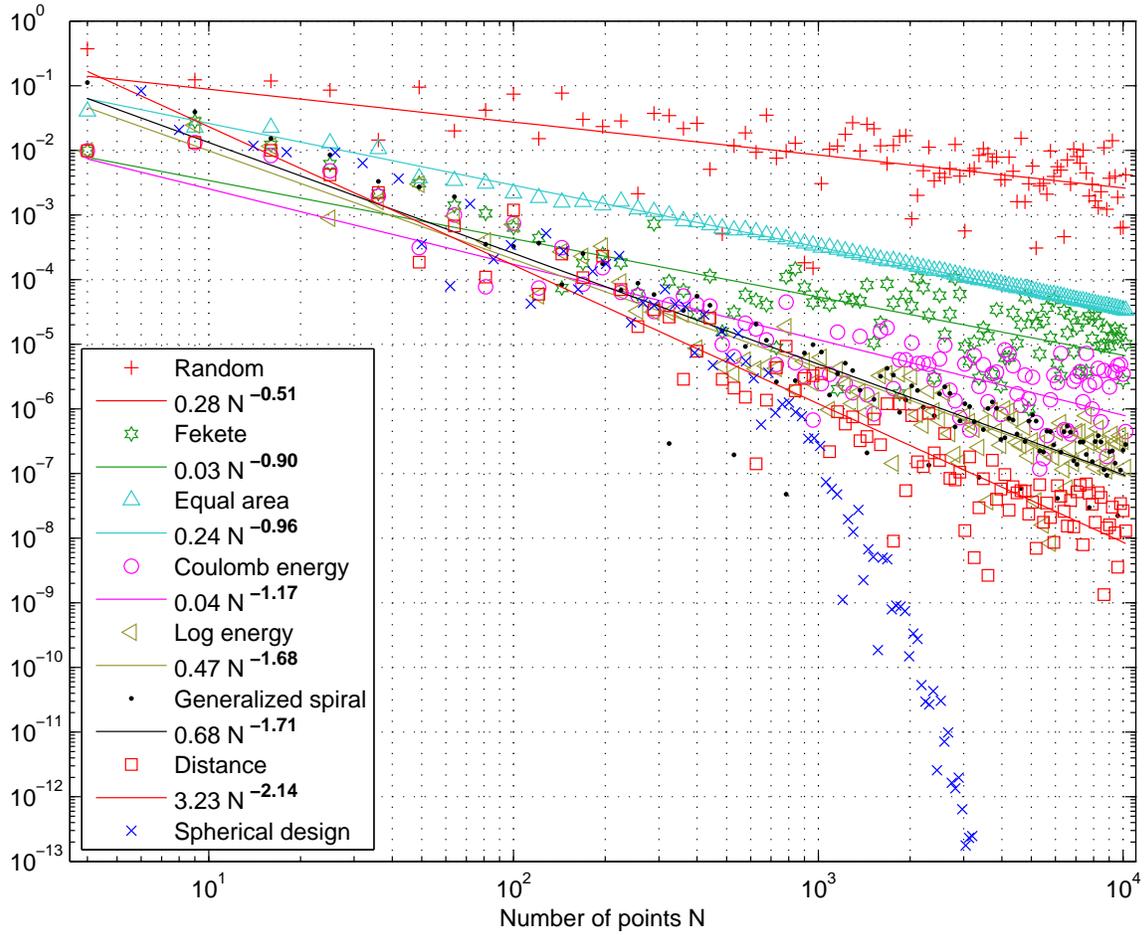}
\caption{Numerical integration errors for the Franke function}
\label{F:Ferr}
\end{center}
\end{figure}
In Figure~\ref{F:Ferr}, the faster than algebraic decay of the numerical integration error
for spherical designs is apparent.

\begin{table}[h!t]
\begin{center}
\caption{\label{T:sstar} Estimates of $s^*$ for $d = 2$}
\begin{tabular}{l|l}
\textbf{Point set} & $s^*$ \\
\hline
Fekete & $1.5$ \\
Equal area & $2$ \\
Coulomb energy & $2$ \\
Log energy & $3$ \\
Generalized spiral & $3$ \\
Distance & $4$ \\
Spherical designs & $\infty$
\end{tabular}
\end{center}
\end{table}

\subsection{Estimating $s^*$}
In order to estimate the value of $s^*$ defined by \eqref{eq:s.star}
we need to calculate the worst-case error for
$\mathbb{H}^s\left( \mathbb{S}^2 \right)$ with a value of $s > s^*$
(cf. Theorems~\ref{thm:rough} and \ref{thm:best.rate}).
Similarly, if the error for the Franke function is decaying approximately like $N^{-{\bar s}/2}$,
then that point set can only be a QMC design for $s \leq \bar s$.

Some conjectured value of $s^*$ are given in Table~\ref{T:sstar},
based on the results in Figure~\ref{F:wce45},
similar experiments with different values of $s$,
and the results in Figure~\ref{F:Ferr}.
For example, the equal area points have a estimated rate of decay
$N^{-0.95}$ for the worst case error with $s = 4.5$ in Figure~\ref{F:wce45},
while the error decays like $N^{-0.96}$ for the Franke function in Figure~\ref{F:Ferr},
leading us to conjecture that $s^* \approx 2$.

Determining the precise value of $s^*$ is very much an open question.


\section{Proofs}
\label{sec:proofs}

Throughout this proof section we use the shortened notation $\numint_N$ for a QMC rule $\numint[X_N]$ defined by a node set $X_N = \{ \PT{x}_1, \dots, \PT{x}_N \} \subset \mathbb{S}^d$. 

\subsection{Proofs of Section~\ref{sec:intro} results}

The proof of Theorem~\ref{thm:rough} requires the following lemma.
\begin{lem} \label{lem:aux.lem}
Given $s > d/2$, if $\wce( \numint_N; \mathbb{H}^s( \mathbb{S}^d ) ) < 1$, then
\begin{equation} \label{eq:wce.inequality.B}
\wce( \numint_N; \mathbb{H}^{s^\prime}( \mathbb{S}^d ) ) < c(d,s,s^\prime) \left[ \wce( \numint_N; \mathbb{H}^s( \mathbb{S}^d ) ) \right]^{s^\prime / s}, \qquad d/2 < s^\prime < s,
\end{equation}
where $c(d,s,s^\prime) > 0$ depends on the norms for $\mathbb{H}^s( \mathbb{S}^d )$ and $\mathbb{H}^{s^\prime}( \mathbb{S}^d )$, but is independent of~$N$.
\end{lem}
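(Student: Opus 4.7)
The plan is to realise the worst-case error as the dual $(\mathbb{H}^s)^*$-norm of the signed error measure and then invoke a real interpolation inequality whose endpoints are the Banach space of finite signed measures and $(\mathbb{H}^s)^*$.  Set
\begin{equation*}
\mu_N \DEF \frac{1}{N}\sum_{j=1}^N \delta_{\PT{x}_j} - \sigma_d,
\end{equation*}
so that, by Proposition~\ref{prop:wce} and Parseval,
\begin{equation*}
\wce(\numint_N; \mathbb{H}^s(\mathbb{S}^d)) = \sup_{\|f\|_{\mathbb{H}^s} \leq 1} \bigl| \textstyle\int_{\mathbb{S}^d} f \, \mathrm{d}\mu_N \bigr| = \|\mu_N\|_{(\mathbb{H}^s(\mathbb{S}^d))^*}.
\end{equation*}
Simultaneously, as $\mu_N$ is the difference of two probability measures, it has uniformly bounded total variation: $|\mu_N|(\mathbb{S}^d) \leq 2$ independently of~$N$, i.e.\ $\|\mu_N\|_{C(\mathbb{S}^d)^*} \leq 2$.

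The crux is then the real interpolation inequality, extracted from the results of Brandolini et al.~\cite{BrChCoarXiv1012.5409v1}: for every signed measure $\mu$ on $\mathbb{S}^d$ and every $d/2 < s' < s$,
\begin{equation*}
\|\mu\|_{(\mathbb{H}^{s'}(\mathbb{S}^d))^*} \leq C(d,s,s') \, \|\mu\|_{C(\mathbb{S}^d)^*}^{\,1-s'/s} \, \|\mu\|_{(\mathbb{H}^{s}(\mathbb{S}^d))^*}^{\,s'/s}.
\end{equation*}
The exponent $s'/s$ corresponds to the interpolation parameter $\theta = s'/s$, reflecting the decomposition $s' = (1-\theta)\cdot 0 + \theta \cdot s$ between the endpoint ``smoothness'' $0$ (the space of measures, dual to $C(\mathbb{S}^d)$) and smoothness $s$; dually this rests on Jackson-type polynomial approximation bounds on the sphere relating $\mathbb{H}^s$-functions to continuous functions.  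Specialising to $\mu = \mu_N$, inserting the total-variation estimate, and using the hypothesis $\wce(\numint_N; \mathbb{H}^s) < 1$ to absorb the harmless factor $2^{\,1-s'/s}$ into the constant, yields the claimed inequality with $c(d,s,s') \DEF 2^{\,1-s'/s}\, C(d,s,s')$.

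The main obstacle is obtaining the \emph{sharp} exponent $s'/s$.  A naive interpolation within the Sobolev scale alone, between $(\mathbb{H}^{s_0})^*$ (whose norm is bounded by a constant for any $s_0 > d/2$ by the Sobolev embedding $\mathbb{H}^{s_0} \hookrightarrow C(\mathbb{S}^d)$) and $(\mathbb{H}^{s})^*$, yields only the exponent $(s'-s_0)/(s-s_0)$, tending to $(s'-d/2)/(s-d/2) < s'/s$ as $s_0 \searrow d/2$.  This weaker bound would be \emph{insufficient} to establish Theorem~\ref{thm:rough}, since it would only give $\wce(\numint_N; \mathbb{H}^{s'}) = \mathcal{O}(N^{-s(s'-d/2)/[d(s-d/2)]})$, which is strictly slower than the required $\mathcal{O}(N^{-s'/d})$ rate.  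Reaching the sharp $s'/s$ requires genuinely exploiting that $\mu_N$ belongs to the finer endpoint $C(\mathbb{S}^d)^*$, and not merely to a Sobolev space of smoothness just above $-d/2$; this is precisely where the Brandolini et al.\ framework enters.
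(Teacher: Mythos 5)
Your reduction of the worst-case error to the dual norm $\|\mu_N\|_{(\mathbb{H}^s(\mathbb{S}^d))^*}$ is fine, and your diagnosis that interpolation purely within the Sobolev scale yields only the exponent $(s'-s_0)/(s-s_0)$ is exactly right. The gap is that the two-endpoint inequality you then invoke,
\begin{equation*}
\|\mu\|_{(\mathbb{H}^{s'}(\mathbb{S}^d))^*} \leq C(d,s,s')\,\|\mu\|_{C(\mathbb{S}^d)^*}^{\,1-s'/s}\,\|\mu\|_{(\mathbb{H}^{s}(\mathbb{S}^d))^*}^{\,s'/s},
\end{equation*}
is \emph{false} for general signed measures, so it cannot be extracted from Brandolini et al.\ in the generality you need. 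Take $d/2<s'<s<d/2+1$ and $\mu=\delta_{\PT{x}}-\delta_{\PT{y}}$ with $|\PT{x}-\PT{y}|=\varepsilon$. Using the generalized distance kernel \eqref{eq:K.gd} one computes, for $\sigma\in\{s,s'\}$,
\begin{equation*}
\|\mu\|_{(\mathbb{H}^{\sigma}(\mathbb{S}^d))^*}^2 = 2\big(K_{\rm{gd}}^{(\sigma)}(\PT{x},\PT{x})-K_{\rm{gd}}^{(\sigma)}(\PT{x},\PT{y})\big)=2\,\varepsilon^{2\sigma-d},
\end{equation*}
while $\|\mu\|_{C(\mathbb{S}^d)^*}=2$. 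Your inequality would force $\varepsilon^{s'-d/2}\lesssim \varepsilon^{(s-d/2)s'/s}$, i.e.\ $\varepsilon^{-(d/2)(1-s'/s)}\lesssim 1$, which fails as $\varepsilon\to0$. (Heuristically: the interpolation space between $C(\mathbb{S}^d)$ and $\mathbb{H}^s=B^s_{2,2}$ at parameter $s'/s$ has smoothness $s'$ but integrability index $p=2s/s'>2$, and $\mathbb{H}^{s'}$ does not embed into it without loss of derivatives.)

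What actually makes the lemma true is not the total-variation bound $|\mu_N|(\mathbb{S}^d)\le 2$ but the fact that $\mu_N+\sigma_d=\frac1N\sum_j\delta_{\PT{x}_j}$ is a \emph{positive} measure, i.e.\ that the quadrature weights are nonnegative; the counterexample above is (up to a negligible perturbation) the error functional of a bounded-total-variation rule with one negative weight. The paper's proof writes, via a Laplace transform of $(1+\lambda_\ell)^{-s'}$,
\begin{equation*}
\big[\wce(\numint_N;\mathbb{H}^{s'}(\mathbb{S}^d))\big]^2=\frac{1}{\gammafcn(s')}\int_0^\infty \tau^{s'-1}e^{-\tau}g(\tau)\,\dd\tau,\qquad g(\tau)=\frac{1}{N^2}\sum_{j=1}^N\sum_{i=1}^N H(\tau,\PT{x}_j,\PT{x}_i)-1,
\end{equation*}
splits the integral at $\varepsilon=[\wce(\numint_N;\mathbb{H}^{s}(\mathbb{S}^d))]^{2/s}$ (this is where the hypothesis $\wce<1$ is really used, not merely to absorb a factor $2^{1-s'/s}$), and on $(0,\varepsilon/2)$ controls $g(\tau)$ by the small-time Gaussian comparison $\tau^{d/2}H(\tau,\PT{x},\PT{y})\le c\,\varepsilon^{d/2}H(\varepsilon,\PT{x},\PT{y})$ applied \emph{termwise} to the double sum --- a step available only because every term carries the positive weight $1/N^2$. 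To salvage your formulation you would have to restrict the interpolation inequality to error functionals of positive-weight rules and prove it by precisely this heat-kernel mechanism; as a statement about arbitrary $\mu$ with bounded total variation it does not hold.
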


\begin{proof}
Relation \eqref{eq:wce.inequality.B} follows from Theorem~3.1 in Brandolini et al.~\cite{BrChCoarXiv1012.5409v1}. For the sake of completeness we give here a proof  of \eqref{eq:wce.inequality.B} along the lines of the proof of Theorem~3.1 tailored to our needs and specifically to the case of spheres $\mathbb{S}^d$. Throughout this proof we use the canonical kernel $K_\mathrm{can}^{(s)}$. 

Writing $1 / ( 1 + \lambda_\ell )^{s^\prime}$ in terms of a Laplace transform (cf. \cite[Eq.~5.9.1]{DLMF2010.05.07}),
\begin{equation*}
\frac{1}{( 1 + \lambda_\ell )^{s^\prime}} = \frac{1}{\gammafcn(s^\prime)} \int_0^\infty e^{-(1 + \lambda_\ell) \tau} \tau^{s^\prime-1} \dd \tau,
\end{equation*}
and applying Proposition~\ref{prop:wce} we obtain for all $d/2 < s^\prime \leq s$
\begin{equation} \label{eq:wce.int.repr}
\left[ \wce( \numint_{N}; \mathbb{H}^{s^\prime}( \mathbb{S}^d ) ) \right]^2 = \frac{1}{\gammafcn(s^\prime)} \int_0^\infty \tau^{s^\prime-1} e^{- \tau} g( \tau ) \dd \tau.
\end{equation}
Here
\begin{equation*}
g(\tau) \DEF g(\tau; \PT{x}_1, \dots, \PT{x}_N ) \DEF \sum_{\ell=1}^\infty e^{- \lambda_\ell \tau} \sum_{k=1}^{Z(d,\ell)} \left| \frac{1}{N} \sum_{j=1}^N Y_{\ell,k}( \PT{x}_j ) \right|^2 = \frac{1}{N^2} \sum_{j=1}^N \sum_{i=1}^N \mathcal{H}(\tau, \PT{x}_j \cdot \PT{x}_i ),
\end{equation*}
where calligraphic $\mathcal{H}$ denotes the \emph{heat kernel} with the constant term removed:
\begin{equation*}
H(\tau, \PT{x}, \PT{y}) \DEF \sum_{\ell=0}^\infty e^{- \lambda_\ell \tau} \, \sum_{k=1}^{Z(d,\ell)} Y_{\ell,k}( \PT{x} ) Y_{\ell,k}( \PT{y} ) = \sum_{\ell=0}^\infty e^{- \lambda_\ell \tau} \, Z(d,\ell) \, P_\ell^{(d)}( \PT{x} \cdot \PT{y} ), \qquad \PT{x}, \PT{y} \in \mathbb{S}^d,
\end{equation*}
which is the fundamental solution to the heat equation $\partial u / \partial \tau + \Delta_d^* \, u = 0$ on $\mathbb{R}_+ \times \mathbb{S}^d$. 
Interchanging integration and summation in \eqref{eq:wce.int.repr} is justified, because the heat kernel is uniformly continuous on $[\eps, \infty) \times \mathbb{S}^d \times \mathbb{S}^d$ for $\eps > 0$.

Let $d/2 < s^\prime < s$ and set $\eps \DEF [\wce( \numint_{N}; \mathbb{H}^{s}( \mathbb{S}^d ) )]^{2/s}$. (Then $\eps < 1$ by assumption.) We split the right-hand side of \eqref{eq:wce.int.repr} into three parts and use the following estimates: 
(i) For ``large values'' of $\tau$,
\begin{align*}
\frac{1}{\gammafcn( s^\prime )} \int_1^\infty \tau^{s^\prime - 1} e^{-\tau} g(\tau) \dd \tau
&< \frac{1}{\gammafcn( s^\prime )} \int_1^\infty \tau^{s - 1} e^{-\tau} g(\tau) \dd \tau
 \leq \frac{\gammafcn( s )}{\gammafcn( s^\prime )} \, \frac{1}{\gammafcn( s )} \int_0^\infty \tau^{s - 1} e^{-\tau} g(\tau) \dd \tau \\
&= \frac{\gammafcn( s )}{\gammafcn( s^\prime )} \, \eps^{s}
 < \frac{\gammafcn( s )}{\gammafcn( s^\prime )} \, \eps^{ s^\prime},
\end{align*}
where we used \eqref{eq:wce.int.repr} with $s^\prime$ replaced by $s$.

(ii) For ``medium values'' of $\tau$,
\begin{align*}
\frac{1}{\gammafcn( s^\prime )} \int_{\eps/2}^1 \tau^{s^\prime - 1} e^{-\tau} g(\tau) \dd \tau
&= \frac{1}{\gammafcn( s^\prime )} \int_{\eps/2}^1 \tau^{s - 1} \tau^{s^\prime-s} e^{-\tau} g(\tau) \dd \tau
 \leq \frac{(\eps/2)^{s^\prime-s}}{\gammafcn( s^\prime )} \int_{\eps/2}^1 \tau^{s - 1} e^{-\tau} g(\tau) \dd \tau \\
&< (\eps/2)^{s^\prime-s} \frac{\gammafcn( s )}{\gammafcn( s^\prime )} \, \frac{1}{\gammafcn( s )} \int_{0}^\infty \tau^{s - 1} e^{-\tau} g(\tau) \dd \tau
 = 2^{s-s^\prime} \frac{\gammafcn( s )}{\gammafcn( s^\prime )} \eps^{s^\prime-s} \eps^s \\
&= 2^{s-s^\prime} \frac{\gammafcn( s )}{\gammafcn( s^\prime )} \eps^{s^\prime}.
\end{align*}

(iii) For ``small values'' of $\tau$, we appeal to the small time Gaussian estimate on the heat kernel (cf. \cite{Va1990}); that is, if $ 0 < \tau < \eps/2$, then for some $c> 0$
\begin{equation*}
\tau^{d/2} H(\tau, \PT{x}, \PT{y} ) \leq c \, \eps^{d/2} H( \eps, \PT{x}, \PT{y} ),
\end{equation*}
which in turn implies, on the assumption that $g(\tau)$ is uniformly bounded on $[0,1)$,
\begin{equation*}
\tau^{d/2} g(\tau) = \frac{1}{N^2} \sum_{j=1}^N \sum_{i=1}^N \tau^{d/2} H(\tau, \PT{x}_j, \PT{x}_i ) - \tau^{d/2} \leq c \, \eps^{d/2} \left( g(\eps) + 1 \right) \leq c^{\prime\prime\prime} \eps^{d/2} .
\end{equation*}
Postponing the proof of the uniform boundedness, it then follows that
\begin{align*}
\frac{1}{\gammafcn( s^\prime )} \int_0^{\eps/2} \tau^{s^\prime - 1} e^{-\tau} g(\tau) \dd \tau
&= \frac{1}{\gammafcn( s^\prime )} \int_0^{\eps/2} \tau^{s^\prime - d/2 - 1} e^{-\tau} \, t^{d/2} g(\tau) \dd \tau \\
&\leq \frac{c^{\prime\prime\prime} }{\gammafcn( s^\prime )} \eps^{d/2} \int_0^{\eps/2} \tau^{s^\prime - d/2 - 1} e^{-\tau} \dd \tau \\
&\leq \frac{c^{\prime\prime\prime} }{\gammafcn( s^\prime )} \eps^{d/2} \int_0^{\eps/2} \tau^{s^\prime - d/2 - 1} \dd \tau
 = \frac{c^{\prime\prime\prime} }{\gammafcn( s^\prime )} \frac{\eps^{s^\prime}}{s^\prime-d/2}.
\end{align*}

From (i), (ii) and (iii) we get the required estimate
\begin{align*}
\left[ \wce( \numint_{N}; \mathbb{H}^{s^\prime}( \mathbb{S}^d ) ) \right]^2
&\leq \left( \frac{\gammafcn( s )}{\gammafcn( s^\prime )} + 2^{s-s^\prime} \frac{\gammafcn( s )}{\gammafcn( s^\prime )} + \frac{c^{\prime\prime\prime} }{\gammafcn( s^\prime )} \frac{1}{s^\prime-d/2} \right) \eps^{s^\prime} \\
&\leq c^{\prime\prime\prime\prime} \left[ \wce( \numint_{N}; \mathbb{H}^{s}( \mathbb{S}^d ) ) \right]^{2 s^\prime / s}.
\end{align*}

It remains to prove uniform boundedness of the $g(\tau)$ for $0 \leq \tau <1$. (We use \eqref{eq:Koksma.like} and the fact that $\int_{\mathbb{S}^d} \mathcal{H}(\tau, \PT{x}, \PT{x}_k ) \dd \sigma_d( \PT{x} ) = 0$.)
\begin{align*}
0 < g(\tau)
&= \frac{1}{N} \sum_{j=1}^N \frac{1}{N} \sum_{i=1}^N \mathcal{H}(\tau, \PT{x}_j, \PT{x}_i )
 \leq \wce( \numint_{N}; \mathbb{H}^{s}( \mathbb{S}^d ) ) \left\| \frac{1}{N} \sum_{j=1}^N \mathcal{H}(\tau, \PT{\cdot}, \PT{x}_j ) \right\|_{\mathbb{H}^s} \\
&= \wce( \numint_{N}; \mathbb{H}^{s}( \mathbb{S}^d ) ) \left\| \sum_{\ell=1}^\infty \sum_{k=1}^{Z(d,\ell)} e^{-\lambda_\ell \, \tau} \left( \frac{1}{N} \sum_{j=1}^N Y_{\ell,k}( \PT{x}_j ) \right) Y_{\ell,k}( \PT{\cdot} ) \right\|_{\mathbb{H}^s} \\
&= \wce( \numint_{N}; \mathbb{H}^{s}( \mathbb{S}^d ) ) \left[ \sum_{\ell=1}^\infty \sum_{k=1}^{Z(d,\ell)} \left( 1 + \lambda_\ell \right)^s e^{-2\lambda_\ell \tau} \left| \frac{1}{N} \sum_{j=1}^N Y_{\ell,k}( \PT{x}_j ) \right|^2 \right]^{1/2} \\
&\leq \wce( \numint_{N}; \mathbb{H}^{s}( \mathbb{S}^d ) ) \left[ \sum_{\ell=1}^\infty \sum_{k=1}^{Z(d,\ell)} \left( 1 + \lambda_\ell \right)^{-s} \left| \frac{1}{N} \sum_{j=1}^N Y_{\ell,k}( \PT{x}_j ) \right|^2 \right]^{1/2} \sup_{\ell \geq 1} \left\{ \left( 1 + \lambda_\ell \right)^s e^{-\lambda_\ell \tau} \right\} \\
&= \left[ \wce( \numint_{N}; \mathbb{H}^{s}( \mathbb{S}^d ) ) \right]^2 \sup_{\ell \geq 1} \left\{ \left( 1 + \lambda_\ell \right)^s e^{-\lambda_\ell \tau} \right\} = \tau^s \, \sup_{\ell \geq 1} \left\{ \left( 1 + \lambda_\ell \right)^s e^{-\lambda_\ell \tau} \right\}.
\end{align*}
The function $\left( 1 + \lambda \right)^s e^{-\lambda \tau}$ has a unique maximum at $\lambda = s / \tau - 1$ with value $s^s \tau^{-s} e^{\tau-s}$. Hence
\begin{equation*}
0 < g(\tau ) \leq s^s e^{\tau-s} \leq s^s e^{1-s}.
\end{equation*}
\end{proof}

\begin{proof}[Proof of Theorem~\ref{thm:rough}]
If $(X_N)$ is a sequence of $N$-point QMC designs for $\mathbb{H}^s( \mathbb{S}^d )$ for $s > d/2$, then $\wce( \numint_{N}; \mathbb{H}^s( \mathbb{S}^d ) ) \to 0$ as $N \to \infty$. Thus $\wce( \numint_{N}; \mathbb{H}^s( \mathbb{S}^d ) ) < 1$ for all $N > N_0$ for some $N_0>0$. By \eqref{eq:wce.inequality.B}
\begin{equation*}
\wce( \numint_{N}; \mathbb{H}^{s^\prime}( \mathbb{S}^d ) ) < c(d,s,s^\prime) \left[ \wce( \numint_{N}; \mathbb{H}^s( \mathbb{S}^d ) ) \right]^{s^\prime / s} < c(d,s,s^\prime) \left[ \frac{c(s,d)}{N^{s/d}} \right]^{s^\prime / s} = \frac{c^{\prime\prime}}{N^{s^\prime/d}}
\end{equation*}
for every $N > N_0$ for each $d/2 < s^\prime < s$. The finitely many exceptions with $N \leq N_0$ satisfy the last inequality with a possibly larger constant $c^{\prime\prime}$, depending only on the norms of $\mathbb{H}^s( \mathbb{S}^d )$ and $\mathbb{H}^{s^\prime}( \mathbb{S}^d )$.
Consequently, $(X_N)$ is a sequence of QMC designs for $\mathbb{H}^{s^\prime}( \mathbb{S}^d )$ for $d/2 < s^\prime < s$. 
\end{proof}

\begin{proof}[Proof of Theorem~\ref{thm:derrived.gen.sequ}]
Let $d \geq 2$. By Theorem~\ref{thm:bondarenko.et.al} there exists a sequence $(Y_{N_t})$ of spherical $t$-designs $Y_{N_t}$ with $N_t \DEF m_d \, t^d$ points ($t \geq 1$) for some suitably large positive integer $m_d$ 
Furthermore, the theorem states that there exist spherical $t$-designs for every cardinality $\geq m_d \, t^d$. Thus we can fill the gaps in the sequence $(Y_{N_t})$ by adding spherical $t$-designs with $N$ points for $N_t < N < N_{t+1}$. If necessary we choose for $N = 2, \dots, m_d$ spherical $1$-designs with $N$ points; that is, configurations with centroid $0$. 
This gives a new sequence $( \widehat{Y}_N )_{N \geq 2}$.

By Theorem~\ref{thm:s.d.prop} there exists a constant $C(s,d) > 0$ such that
\begin{equation} \label{eq:inequality.A}
\sup_{\substack{f \in \mathbb{H}^s( \mathbb{S}^d ), \\ \| f \|_{\mathbb{H}^s} \leq 1}} \Bigg| \frac{1}{N} \sum_{\PT{y} \in \widehat{Y}_N} f( \PT{y} ) - \int_{\mathbb{S}^d} f( \PT{y} ) \dd \sigma_d( \PT{y} ) \Bigg| \leq \frac{C(s,d)}{t^s} = \frac{C(s,d)}{N^{s/d}} \left( \frac{N^{1/d}}{t} \right)^s
\end{equation}
for all $N \geq 1$. Since
\begin{equation*}
c_d^{1/d} = \frac{N_t^{1/d}}{t} \leq \frac{N^{1/d}}{t} \leq \frac{N_{t+1}^{1/d}}{t} = c_d^{1/d} \, \frac{t+1}{t} \qquad \text{for all $N_t \leq N \leq N_{t+1}$,}
\end{equation*}
the right-hand side of \eqref{eq:inequality.A} satisfies for all $N \geq 1$
\begin{equation*}
\frac{C(s,d)}{N^{s/d}} \left( \frac{N^{1/d}}{t} \right)^s \leq \frac{c_d^{s/d} C(s,d) \left( 1 + 1/t \right)^s}{N^{s/d}} \leq \frac{C^\prime(s,d)}{N^{s/d}}, \qquad C^\prime(s,d) \DEF 2^s c_d^{s/d} C(s,d).
\end{equation*}
Consequently, $( \widehat{Y}_N )_{N \geq 2}$ is a sequence of generic QMC designs.
\end{proof}

\subsection{Proofs of Section~\ref{sec:wce} results}

\begin{proof}[Proof of Theorem~\ref{thm:minimizers}]
For $N \geq 2$ let $X_N^* = \{ \PT{x}_{1,N}^*, \dots, \PT{x}_{N,N}^* \}$ be as in Theorem~\ref{thm:minimizers} and $Y_N = \{ \PT{y}_{1,N}, \dots, \PT{y}_{N,N} \}$ be as in Theorem~\ref{thm:derrived.gen.sequ}. By minimality of the $X_N^*$'s, for every $N \geq 2$ 
\begin{equation*}
\sum_{j=1}^N \sum_{i=1}^N \, \mathcal{K}^{(s)}({\PT x}_{j,N}^* \cdot {\PT x}_{i,N}^* ) \leq \sum_{j=1}^N \sum_{i=1}^N \, \mathcal{K}^{(s)}({\PT y}_{j,N} \cdot {\PT y}_{i,N} ).
\end{equation*}
Hence, by Proposition~\ref{prop:wce} 
\begin{equation*} 
\wce( \numint[X_N^*]; \mathbb{H}^s( \mathbb{S}^d ) ) \leq \wce( \numint[Y_N]; \mathbb{H}^s( \mathbb{S}^d ) ) \leq \frac{c(s,d)}{N^{s/d}},
\end{equation*}
where the last inequality follows by Theorem~\ref{thm:derrived.gen.sequ}.
\end{proof}

For an $N$-point configuration $X_N$ on $\mathbb{S}^d$ the worst-case error formula \eqref{eq:earlywce} can be expressed in terms of the polynomial part \eqref{eq:K.t} and the tail part
\begin{equation} \label{eq:Fourier.expansion.tail.part}
\mathfrak{T}_t^{(s)}( z ) \DEF \sum_{\ell=t+1}^\infty a_\ell^{(s)} Z(d,\ell) P_\ell^{(d)}( z ), \qquad z \in [-1,1];
\end{equation}
that is,
\begin{equation} \label{eq:wce3}
\left[ \wce( \numint[X_N]; \mathbb{H}^s( \mathbb{S}^d ) ) \right]^2 = \frac{1}{N^2}\sum_{j=1}^{N} \sum_{i=1}^{N} \, \mathcal{K}^{(s)}_t({\PT x}_j \cdot {\PT x}_i) + \frac{1}{N^2} \sum_{j=1}^{N} \sum_{i=1}^{N} \, \mathfrak{T}_t^{(s)}({\PT x}_j \cdot {\PT x}_i).
\end{equation}

Since $\mathcal{K}^{(s)}_t$ and $\mathfrak{T}_t^{(s)}$ are positive definite in the sense of Schoenberg, both double sums in \eqref{eq:wce3} are non-negative (cf. Lemma~\ref{lem:positive.definiteness}). Consequently, a sequence of QMC designs must satisfy the following necessary conditions.
\begin{prop} \label{prop:necessary.conditions}
For a given sequence $(X_N)$ of QMC designs $X_N = \{ \PT{x}_{1,N}, \dots, \PT{x}_{N,N} \}$ for $\mathbb{H}^s( \mathbb{S}^d )$, $s > d/2$, there exists $c(s,d) > 0$ independent of $N$ such that, for all integers $t$ 
\begin{equation*}
\frac{1}{N^2}\sum_{j=1}^{N} \sum_{i=1}^{N} \, \mathcal{K}^{(s)}_t({\PT x}_{j,N} \cdot {\PT x}_{i,N}) \leq \frac{[ c(s,d) ]^2}{N^{2s/d}}, \qquad \frac{1}{N^2} \sum_{j=1}^{N} \sum_{i=1}^{N} \, \mathfrak{T}_t^{(s)}({\PT x}_{j,N} \cdot {\PT x}_{i,N}) \leq \frac{[ c(s,d) ]^2}{N^{2s/d}}.
\end{equation*}
\end{prop}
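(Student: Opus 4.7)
My plan is to read the two stated inequalities as immediate consequences of the decomposition \eqref{eq:wce3} combined with positive definiteness. The definition of a QMC design gives
\[
\left[\wce(\numint[X_N];\mathbb{H}^s(\mathbb{S}^d))\right]^2 \leq \frac{[c(s,d)]^2}{N^{2s/d}},
\]
and \eqref{eq:wce3} expresses the left-hand side as the sum of the two quantities to be bounded. So the proposition reduces to showing that each of the two double sums is non-negative, whence each is individually dominated by the full worst-case error squared.

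For non-negativity, I would invoke Lemma~\ref{lem:positive.definiteness} termwise. Since $a_\ell^{(s)}>0$ for every $\ell$ (by assumption \eqref{eq:sequenceassumption}), we have for any $N$-point set $X_N$
\[
\frac{1}{N^2}\sum_{j=1}^N\sum_{i=1}^N \mathcal{K}^{(s)}_t(\PT{x}_{j,N}\cdot\PT{x}_{i,N})
= \sum_{\ell=1}^t a_\ell^{(s)}\,\Phi_\ell(\PT{x}_{1,N},\dots,\PT{x}_{N,N}) \geq 0,
\]
and an identical argument with the sum running from $\ell=t+1$ to $\infty$ handles the tail $\mathfrak{T}_t^{(s)}$; here one should note that the swap of summation orders in the tail is justified by absolute convergence of the Laplace--Fourier series (guaranteed by $s>d/2$ together with \eqref{eq:sequenceassumption} and the dimension estimate \eqref{eq:Z.d.n}), together with the addition theorem \eqref{eq:addition.theorem}.

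Combining these two observations, I write
\[
\frac{1}{N^2}\sum_{j=1}^N\sum_{i=1}^N \mathcal{K}^{(s)}_t(\PT{x}_{j,N}\cdot\PT{x}_{i,N})
\leq \frac{1}{N^2}\sum_{j=1}^N\sum_{i=1}^N \mathcal{K}^{(s)}_t(\PT{x}_{j,N}\cdot\PT{x}_{i,N})
+ \frac{1}{N^2}\sum_{j=1}^N\sum_{i=1}^N \mathfrak{T}_t^{(s)}(\PT{x}_{j,N}\cdot\PT{x}_{i,N})
= [\wce(\numint[X_N];\mathbb{H}^s(\mathbb{S}^d))]^2,
\]
and symmetrically for the tail term. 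Applying the QMC design bound finishes the proof, with the same constant $c(s,d)$ from Definition~\ref{def:specific_approx.sph.design} serving for both inequalities.

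There is no substantive obstacle here: the decomposition \eqref{eq:wce3} has been derived, the QMC design bound is the hypothesis, and positive definiteness of each spectral piece is essentially the content of Lemma~\ref{lem:positive.definiteness}. The only small point that needs articulation is that the bound holds \emph{uniformly in $t$}, which is automatic because the right-hand side $[c(s,d)]^2/N^{2s/d}$ is independent of $t$ and the splitting \eqref{eq:wce3} is valid for every non-negative integer $t$.
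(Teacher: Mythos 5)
Your proposal is correct and follows exactly the paper's own argument: the decomposition \eqref{eq:wce3} of the squared worst-case error into the truncated-kernel and tail energies, non-negativity of each piece via positive definiteness (Lemma~\ref{lem:positive.definiteness} applied termwise with $a_\ell^{(s)}>0$), and then the QMC design bound on the total. Nothing further is needed.
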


As discussed in \cite{BrHe2007}, one cannot expect to get the desired estimate $\mathcal{O}(t^{-2s})$, which translates into $\mathcal{O}(N^{-2s/d})$ if $N \asymp t^d$, simply by term wise estimation of the energy corresponding to the tail portion $\mathfrak{T}_t^{(s)}$ of the reproducing kernel.
The papers \cite{BrHe2007,HeSl2005,HeSl2006} thus devised a method of splitting the reproducing kernel into a polynomial part, which could be integrated exactly by an appropriate numerical integration rule, and a remainder part small enough to be bounded in a simple way. 
In \cite{BrWo2010Manuscript} this method is applied to derive the following estimate for the worst-case error for a sequence $(X_N)$ that satisfies Property R.

\begin{thm} \label{thm:wce.S.d.bounds}
Under the assumptions of Proposition~\ref{prop:wce}, if $(X_N)$ is a sequence of $N$-point configurations on $\mathbb{S}^d$ that satisfies Property R, then there exists a constant $c > 0$ independent of $N$ such that for integers $t$ with $t \asymp N^{1/d}$ 
\begin{equation*}
\left[ \wce( \numint[X_{N}]; \mathbb{H}^s( \mathbb{S}^d ) ) \right]^2 \leq \frac{c}{N^2} \sum_{j=1}^{N} \sum_{i=1}^{N} \, \mathcal{K}^{(s)}_t({\PT x}_j \cdot {\PT x}_i) + \mathcal{O}(N^{-2s/d}) \qquad \text{as $N\to\infty$.}
\end{equation*}
\end{thm}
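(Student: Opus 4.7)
The starting point is the exact decomposition \eqref{eq:wce3}, which already places the polynomial energy $N^{-2}\sum_{j,i}\mathcal{K}_t^{(s)}(\PT{x}_j\cdot\PT{x}_i)$ on the right-hand side of what is to be shown. What remains is to bound the tail energy
$$
\mathcal{T}_N\DEF\frac{1}{N^{2}}\sum_{j=1}^{N}\sum_{i=1}^{N}\mathfrak{T}_t^{(s)}(\PT{x}_j\cdot\PT{x}_i)
$$
by $\mathcal{O}(N^{-2s/d})$, where a harmless multiple of the polynomial energy may be absorbed on the right (this accounts for the constant $c$ in the statement). The naive termwise bound $|\mathcal{T}_N|\leq\sum_{\ell>t}a_\ell^{(s)}Z(d,\ell)\asymp t^{d-2s}\asymp N^{1-2s/d}$ is off by a factor of $N$, so a more refined kernel-splitting argument exploiting Property~R is required.

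My plan is to replace the sharp truncation $\mathcal{K}_t^{(s)}$ by a smoothly filtered polynomial of the same degree. Fix a $C^\infty$ cut-off $\eta\colon[0,\infty)\to[0,1]$ with $\eta\equiv 1$ on $[0,1/2]$ and $\eta\equiv 0$ on $[1,\infty)$, and set
$$
F_t(\PT{x}\cdot\PT{y})\DEF\sum_{\ell=1}^{t}\eta(\ell/t)\,a_\ell^{(s)}\,Z(d,\ell)\,P_\ell^{(d)}(\PT{x}\cdot\PT{y}),\qquad \mathcal{R}_t\DEF\mathcal{K}^{(s)}-F_t,
$$
so that $F_t$ is a zonal polynomial of degree $\leq t$ and the Laplace--Fourier coefficients of $\mathcal{R}_t$ vanish for all $\ell\leq t/2$. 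Combining this vanishing with the smoothness of $\eta$, a standard summation-by-parts argument on the Gegenbauer expansion (using the derivative recurrences for $P_\ell^{(d)}(\cos\theta)$) yields the ``smooth localization'' estimate
$$
|\mathcal{R}_t(\cos\theta)|\;\leq\;C_{M}\,t^{d-2s}\,(1+t\theta)^{-M},\qquad 0<\theta\leq\pi,
$$
valid for any prescribed $M>0$.

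Decomposing $[\wce(\numint[X_N];\mathbb{H}^{s}(\mathbb{S}^d))]^{2}=\Sigma_F+\Sigma_R$ with $\Sigma_F:=N^{-2}\sum_{j,i}F_t(\PT{x}_j\cdot\PT{x}_i)$ and $\Sigma_R:=N^{-2}\sum_{j,i}\mathcal{R}_t(\PT{x}_j\cdot\PT{x}_i)$, the two parts are bounded separately. Since $0\leq\eta(\ell/t)\leq 1$ and all involved coefficients are non-negative, Lemma~\ref{lem:positive.definiteness} gives termwise
$$
\Sigma_F\;=\;\sum_{\ell=1}^{t}\eta(\ell/t)\,a_\ell^{(s)}\sum_{k=1}^{Z(d,\ell)}\Bigl|\tfrac{1}{N}\sum_{j=1}^{N}Y_{\ell,k}(\PT{x}_j)\Bigr|^{2}\;\leq\;\frac{1}{N^{2}}\sum_{j,i}\mathcal{K}_t^{(s)}(\PT{x}_j\cdot\PT{x}_i).
$$
For $|\Sigma_R|$, fix $j$ and partition the remaining indices into dyadic shells $A_k^{(j)}:=\{i\neq j:2^{k-1}c_1/t<\dist(\PT{x}_j,\PT{x}_i)\leq 2^{k}c_1/t\}$, $k\geq 0$. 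Covering $A_k^{(j)}$ by $\mathcal{O}(2^{kd})$ caps of radius $c_1/t\asymp N^{-1/d}$ and invoking Property~R yields $|A_k^{(j)}|\lesssim 2^{kd}$, while the localization estimate bounds $|\mathcal{R}_t|$ on $A_k^{(j)}$ by $t^{d-2s}2^{-kM}$. Choosing $M>d$ renders the geometric series $\sum_k 2^{k(d-M)}$ convergent, so $\sum_{i\neq j}|\mathcal{R}_t(\PT{x}_j\cdot\PT{x}_i)|\lesssim t^{d-2s}$. Adding the $N$ diagonal terms (each of size $\mathcal{R}_t(1)\lesssim t^{d-2s}$), summing over $j$, and dividing by $N^{2}$ produce $|\Sigma_R|\lesssim t^{d-2s}/N=\mathcal{O}(N^{-2s/d})$, since $t\asymp N^{1/d}$.

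The main obstacle is proving the localization estimate on $\mathcal{R}_t(\cos\theta)$. This is a classical but technical ingredient in the analysis of smooth polynomial approximations on $\mathbb{S}^d$: each derivative of $\eta$ must be traded, via summation by parts against the oscillation of $P_\ell^{(d)}(\cos\theta)$, for one extra factor of $(t\theta)^{-1}$, and iterating $M$ times yields the claim. Once this input is granted, Property~R enters decisively by matching the natural localization scale $t^{-1}\asymp N^{-1/d}$ of the smooth filter with the cap size allowed by the hypothesis, so that the dyadic bookkeeping converges.
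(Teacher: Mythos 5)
The paper itself does not prove this theorem --- it is quoted from \cite{BrWo2010Manuscript} --- but the method it attributes to that reference and to \cite{BrHe2007,HeSl2005,HeSl2006} is exactly your kernel-splitting scheme: a (filtered) polynomial part of degree $\asymp N^{1/d}$ plus a localized remainder whose double sum is controlled by Property R through a dyadic-shell count. Your bound $\Sigma_F\le N^{-2}\sum_{j,i}\mathcal{K}_t^{(s)}(\PT{x}_j\cdot\PT{x}_i)$ is correct by Lemma~\ref{lem:positive.definiteness}, and, granting the localization estimate, your Property-R bookkeeping ($|A_k^{(j)}|\lesssim 2^{kd}$, the choice $M>d$, the diagonal of size $\mathcal{R}_t(1)\asymp t^{d-2s}$) correctly yields $|\Sigma_R|=\mathcal{O}(t^{d-2s}/N)=\mathcal{O}(N^{-2s/d})$ when $t\asymp N^{1/d}$.

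The one genuine gap is the localization estimate $|\mathcal{R}_t(\cos\theta)|\le C_M\,t^{d-2s}(1+t\theta)^{-M}$ itself. Under the assumptions of Proposition~\ref{prop:wce} the coefficients satisfy only $a_\ell^{(s)}\asymp(1+\ell)^{-2s}$; nothing controls their finite differences, so the summation-by-parts argument you invoke cannot be run, and the estimate is in fact false for some admissible sequences. For instance $a_\ell^{(s)}=(2+(-1)^\ell)(1+\ell)^{-2s}$ is admissible, and since $P_\ell^{(d)}(-x)=(-1)^\ell P_\ell^{(d)}(x)$ the resulting tail kernel carries an extra bump of height $\asymp t^{d-2s}$ near $\theta=\pi$; rougher sign modulations destroy localization entirely, leaving only the trivial uniform bound $|\mathcal{R}_t|\lesssim t^{d-2s}$, which loses a factor of $N$ in $\Sigma_R$. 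The repair is standard and is precisely what produces the constant $c$ in the statement: prove the inequality first for one fixed smooth admissible sequence, say $b_\ell^{(s)}=(1+\lambda_\ell)^{-s}$, for which each dyadic block $\sum_\ell\psi(\ell/(2^m t))\,b_\ell^{(s)}Z(d,\ell)P_\ell^{(d)}$ is a genuinely bandlimited kernel with a $C^\infty$ filter (so the classical localization lemma applies, and the blocks sum because $d-2s<0$); then transfer to a general $(a_\ell^{(s)})$ via $a_\ell^{(s)}\asymp b_\ell^{(s)}$ and the second line of \eqref{eq:earlywce}, which shows that both $[\wce(\numint[X_N];\mathbb{H}^s(\mathbb{S}^d))]^2$ and $N^{-2}\sum_{j,i}\mathcal{K}_t^{(s)}(\PT{x}_j\cdot\PT{x}_i)$ change only by constant factors when the coefficient sequence is replaced by an equivalent one. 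With that amendment your argument is complete and coincides with the cited proof strategy.
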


\begin{proof}[Proof of Theorem~\ref{thm:characterization}]
Let $(X_N)$ be a sequence of $N$-point configurations on $\mathbb{S}^d$ satisfying Property R.
If $(X_N)$ is a sequence of QMC designs for $\mathbb{H}^s( \mathbb{S}^d)$, then by Proposition~\ref{prop:necessary.conditions}, for all integers~$t$ 
\begin{equation*}
\frac{1}{N^2}\sum_{j=1}^{N} \sum_{i=1}^{N} \, \mathcal{K}^{(s)}_t({\PT x}_j^t \cdot {\PT x}_i^t) = \mathcal{O}(N^{-2s/d}) \qquad \text{as $N \to \infty$.}
\end{equation*}
In the converse direction, if $(X_N)$ satisfies the above condition, then by Theorem~\ref{thm:wce.S.d.bounds}
\begin{equation*}
\left[ \wce( \numint_{N}; \mathbb{H}^s( \mathbb{S}^2 ) \right]^2 = \mathcal{O}(N^{-2s/d}) \qquad \text{as $N \to \infty$},
\end{equation*}
and hence $(X_N)$ is a sequence of QMC designs for $\mathbb{H}^s( \mathbb{S}^d)$.
\end{proof}

\subsection{Proofs of Section~\ref{sec:uniform.distribution} results}

\begin{proof}[Proof of Proposition~\ref{prop:Kosma.Hlawka.bound}]
We recall that two different kernels for the same $\mathbb{H}^s( \mathbb{S}^d)$ yield worst case errors that can differ by at most constant factors, and also that for the particular case of the canonical kernel $K_{\mathrm{can}}^{(s)}$, the worst case error decreases monotonically with $s$ for
$s>d/2$, cf. relation~\eqref{eq:wce.inequality.A}. Using these facts and \eqref{eq:discrepancy.identies}, which gives the worst-case error with respect to $K_{\mathrm{dist}}$, we have for $s \geq (d + 1)/2$ the following estimates:
\begin{equation*}
\wce( \numint[X_N]; \mathbb{H}^{s}( \mathbb{S}^d ) ) \leq \beta_1 \, \wce( \numint[X_N]; \mathbb{H}^{(d+1)/2}( \mathbb{S}^d ) ) \leq \beta_2 \, D_{\IL_{2}}^{C}( X_N ) \leq \beta_3 \, D_{\IL_{\infty}}^{C}( X_N ), 
\end{equation*}
where the positive constants $\beta_1$, $\beta_2$ and $\beta_2$ depend on the chosen norms.
%
\end{proof}

\subsection{Proofs of Section~\ref{sec:better.than.average} results}

\begin{proof}[Proof of Theorem~\ref{thm:average}]
Let $\mathcal{A}^L(z)$ be the truncated series
\begin{equation*}
\mathcal{A}^L(z) \DEF \sum_{\ell=1}^L a_\ell \, Z(d,\ell) \, P_\ell^{(d)}(z), \qquad z \in [-1,1].
\end{equation*}
On separating the diagonal and off-diagonal terms of the double sum
\begin{equation*}
\mathcal{A}^L[ X_N ] \DEF \frac{1}{N^2} \sum_{j=1}^N \sum_{i=1}^N \mathcal{A}^L( \PT{x}_j \cdot \PT{x}_i ), \qquad X_N = \{ \PT{x}_1, \dots, \PT{x}_N \} \subset \mathbb{S}^d,
\end{equation*}
we obtain
\begin{align*}
\mathbb{E} \mathcal{A}^L[ X_N ]
&= \int_{\mathbb{S}^d} \cdots \int_{\mathbb{S}^d} \Bigg[ \frac{\mathcal{A}^L(1)}{N} + \frac{1}{N^2} \mathop{\sum_{j=1}^N \sum_{i=1}^N}_{j \neq i} \mathcal{A}^L( \PT{x}_j \cdot \PT{x}_i ) \Bigg] \dd \sigma_d( \PT{x}_1 ) \cdots \dd \sigma_d( \PT{x}_N ) \\
&= \frac{\mathcal{A}^L(1)}{N} + \frac{1}{N^2} \mathop{\sum_{j=1}^N \sum_{i=1}^N}_{j \neq i} \int_{\mathbb{S}^d} \int_{\mathbb{S}^d} \mathcal{A}^L( \PT{x}_j \cdot \PT{x}_i ) \dd \sigma_d( \PT{x}_j ) \dd \sigma_d( \PT{x}_i ) \\
&= \frac{\mathcal{A}^L(1)}{N} + \frac{N ( N - 1 )}{N^2} \sum_{\ell=1}^L a_\ell \sum_{k=1}^{Z(d,\ell)} \left( \int_{\mathbb{S}^d} Y_{\ell,k}( \PT{x} ) \dd \sigma_d( \PT{x} ) \right)^2 \\
&= \frac{\mathcal{A}^L(1)}{N}.
\end{align*}

For constant coefficients $a_1 = a_2 = \cdots = a_L = 1$ this is the result of Theorem~6 in \cite{SlWo2009}.

The quantities $\mathcal{A}^1[X_N], \mathcal{A}^2[X_N], \dots$ form a point-wise non-decreasing sequence of non-negative $(\sigma_d \cdots \sigma_d)$-measurable functions with limit function $\mathcal{A}[X_N]$. By the monotone convergence theorem it follows that
\begin{align*}
\mathbb{E} \mathcal{A}[ X_N ]
&= \int_{\mathbb{S}^d} \cdots \int_{\mathbb{S}^d} \mathcal{A}[ X_N ] \dd \sigma_d( \PT{x}_1 ) \cdots \dd \sigma_d( \PT{x}_N ) \\
&= \lim_{L \to \infty} \mathbb{E} \mathcal{A}^L[ X_N ] = \lim_{L \to \infty} \frac{\mathcal{A}^L(1)}{N} = \frac{\mathcal{A}(1)}{N}.
\end{align*}
This completes the proof.
\end{proof}

\begin{proof}[Proof of Theorem~\ref{thm:equal.area}]
We follow the proof idea leading to \cite[Theorem~2.2]{RaSaZh1994}. 
Let $\mathcal{D}_N = \{D_{j,N}, \dots, D_{N,N} \}$ be an equal area partition of $\mathbb{S}^d$ into subsets with small diameter; that is: $\cup_{j=1}^N D_{j,N} = \mathbb{S}^d$, where $\sigma_d( D_{j,N} \cap D_{k,N} ) = 0$ for all $j, k = 1, \dots, N$ with $j \neq k$ and $\sigma_d( D_{j,N} ) = 1/N$; furthermore, $\diam D_{j,N} \leq c / N^{1/d}$ for some $c$ not depending on $N$. Each $D_{j,N}$ is equipped with the probability measure
\begin{equation} \label{eq:mu.j.N}
\mu_{j,N} \DEF \frac{\sigma_d \big|_{D_{j,N}}}{\sigma_d( D_{j,N} )}.
\end{equation}

Let $d/2 < s < d/2 + 1$. Then the expected value of the squared worst-case error $[\wce( \numint[X_N]; \mathbb{H}^s( \mathbb{S}^d ) )]^2$ for the space $\mathbb{H}^s( \mathbb{S}^d )$  provided with the kernel $K_{\mathrm{gd}}^{(s)}$ given in \eqref{eq:K.gd} when the $j$-th node is chosen randomly from $D_{j,N}$ (with respect to uniform measure on $D_{j,N}$) is given by (see \eqref{eq:dist.kernel.wce.formula})
\begin{align*}
&\mathbb{E} \left[ \left\{ \wce( \numint[X_N]; \mathbb{H}^s( \mathbb{S}^d ) ) \right\}^2 \right] \\
&\phantom{equals}= \int_{D_{1,N}} \cdots \int_{D_{N,N}} \left[ \wce( \numint[X_N]; \mathbb{H}^s( \mathbb{S}^d ) ) \right]^2 \dd \mu_{1,N}( \PT{x}_1 ) \cdots \dd \mu_{N,N}( \PT{x}_N ) \\
&\phantom{equals}= V_{d-2s}( \mathbb{S}^d ) - \frac{1}{N^2} \mathop{\sum_{j=1}^N \sum_{i=1}^N}_{j \neq i} \int_{D_{j,N}} \int_{D_{i,N}} \left| \PT{x}_j - \PT{x}_i \right|^{2s-d} \dd \mu_{j,N}( \PT{x}_j ) \dd \mu_{i,N}( \PT{x}_i ) \\
&\phantom{equals}= V_{d-2s}( \mathbb{S}^d ) - \Bigg[ \int_{\mathbb{S}^d} \int_{\mathbb{S}^d} \left| \PT{x} - \PT{y} \right|^{2s-d} \dd \sigma_d( \PT{x} ) \dd \sigma_d( \PT{y} ) \\
&\phantom{equals= V_{d-2s}( \mathbb{S}^d ) - }- \frac{1}{N^2} \sum_{j=1}^N \int_{D_{j,N}} \int_{D_{j,N}} \left| \PT{x} - \PT{y} \right|^{2s-d} \dd \mu_{j,N}( \PT{x} ) \dd \mu_{j,N}( \PT{y} ) \Bigg].
\end{align*}
Since the first double integral in brackets equals $V_{d-2s}( \mathbb{S}^d )$ (see \eqref{eq:gd.kernel.coeffs}), we deduce that
\begin{align*}
&\mathbb{E} \left[ \left\{ \wce( \numint[X_N]; \mathbb{H}^s( \mathbb{S}^d ) ) \right\}^2 \right] 
= \frac{1}{N^2} \sum_{j=1}^N \int_{D_{j,N}} \int_{D_{j,N}} \left| \PT{x} - \PT{y} \right|^{2s-d} \dd \mu_{j,N}( \PT{x} ) \dd \mu_{j,N}( \PT{y} ) \\
&\phantom{equals}\leq \frac{1}{N^2} \sum_{j=1}^N  \left[ \diam D_{j,N} \right]^{2s-d} \leq \frac{1}{N^2} \sum_{j=1}^N \left[ c N^{-1/d} \right]^{2s-d} = c^{2s-d} N^{-2s/d}.
\end{align*}

The lower bound in \eqref{eq:E.wce.equal.area} follows from Theorem~\ref{thm:wce.lower.bound}. 
\end{proof}

\begin{proof}[Proof of Theorem~\ref{thm:equal.area.lower.bound}]
Let $d/2 + L < s < d/2 + L + 1$ for an integer $L \geq 1$. 
Arguing as in the proof of Theorem~\ref{thm:equal.area}, but using the kernel in \eqref{eq:K.gd.general}, we obtain
\begin{align*}
\mathbb{E} \left[ \left\{ \wce( \numint[X_N]; \mathbb{H}^s( \mathbb{S}^d ) ) \right\}^2 \right]
&= \frac{1}{N^2} \sum_{j=1}^N \int_{D_{j,N}} \int_{D_{j,N}} \left[ \mathcal{Q}_L( 1 ) - \mathcal{Q}_L( \PT{x} \cdot \PT{y} ) \right] \dd \mu_{j,N}( \PT{x} ) \dd \mu_{j,N}( \PT{y} ) \\
&\phantom{=}- \frac{1}{N^2} \sum_{j=1}^N \int_{D_{j,N}} \int_{D_{j,N}} (-1)^{L+1} \left| \PT{x} - \PT{y} \right|^{2s-d} \dd \mu_{j,N}( \PT{x} ) \dd \mu_{j,N}( \PT{y} ),
\end{align*}
where we used the fact that the Laplace-Fourier expansion \eqref{eq:cal.Q.L} only contains Legendre polynomials $P_\ell^{(d)}$ with $\ell \geq 1$ and thus
\begin{equation*}
\frac{1}{N^2} \sum_{j=1}^N \sum_{i=1}^N \int_{D_{j,N}} \int_{D_{i,N}} \mathcal{Q}_L( \PT{x} \cdot \PT{y} ) \dd \mu_{i,N}( \PT{x} ) \dd \mu_{k,N}( \PT{y} ) = \int_{\mathbb{S}^d} \int_{\mathbb{S}^d} \mathcal{Q}_L( \PT{x} \cdot \PT{y} ) \dd \sigma_d( \PT{x} ) \dd \sigma_d( \PT{y} ) = 0.
\end{equation*}

For $L = 1$ the definition of $\mathcal{Q}_L$ given in \eqref{eq:cal.Q.L} and the fact that $P_1^{(d)}(x) = x$ yields
\begin{equation*}
\mathcal{Q}_1( 1 ) - \mathcal{Q}_1( \PT{x} \cdot \PT{y} ) = - \alpha_1^{(s)} Z(d, 1) \left( 2 - 2 \PT{x} \cdot \PT{y} \right) = - \alpha_1^{(s)} \left( d + 1 \right) \left| \PT{x} - \PT{y} \right|^2,
\end{equation*}
where $\alpha_1^{(s)} < 0$ by \eqref{eq:gd.kernel.coeffs}. More generally, using the following hypergeometric function relation for the polynomials $P_\ell^{(d)}(x)$ (see, e.g, \cite[Eq.~18.5.9]{DLMF2010.05.07}),
\begin{equation*}
P_\ell^{(d)}(x) = \Hypergeom{2}{1}{-\ell,\ell+d-1}{d/2}{\frac{1-x}{2}},
\end{equation*}
we have the following representation for $\mathcal{Q}_L( \PT{x} \cdot \PT{y} )$ in terms of even powers of distances:
\begin{align*}
&\mathcal{Q}_L( 1 ) - \mathcal{Q}_L( \PT{x} \cdot \PT{y} ) \\
&\phantom{equals}= \mathcal{Q}_L( 1 ) - \sum_{\ell=1}^L \sum_{p=0}^\ell \left( (-1)^{L+1-\ell} - 1 \right) \alpha_\ell^{(s)} Z(d, \ell) \frac{\Pochhsymb{-\ell}{p} \Pochhsymb{\ell + d - 1}{p}}{\Pochhsymb{d/2}{p} p!} \left( \frac{1 - \PT{x} \cdot \PT{y}}{2} \right)^p \\
&\phantom{equals}= - \sum_{p=1}^\ell \left\{ \sum_{\ell=p}^L \left( (-1)^{L+1-\ell} - 1 \right) \alpha_\ell^{(s)} Z(d, \ell) \Pochhsymb{-\ell}{p} \Pochhsymb{\ell + d - 1}{p} \right\} \frac{1}{\Pochhsymb{d/2}{p} p!} \left( \frac{\left| \PT{x} - \PT{y} \right|}{2} \right)^{2p}.
\end{align*}
For small distances the dominant term is the square of the distance. Since $(-1)^{L+1-\ell} \alpha_\ell^{(s)}$ is positive by \eqref{eq:gd.kernel.coeffs}, it follows that the coefficient of $( | \PT{x} - \PT{y} | / 2 )^2$,
\begin{equation*}
\beta_1^{(s)} \DEF \frac{2}{d} \sum_{\ell=1}^L \left( (-1)^{L+1-\ell} - 1 \right) \alpha_\ell^{(s)} Z(d, \ell) \left[ - \Pochhsymb{-\ell}{1} \right] \Pochhsymb{\ell + d - 1}{1},
\end{equation*}
is positive and therefore
\begin{equation*}
\mathcal{Q}_L( 1 ) - \mathcal{Q}_L( \PT{x} \cdot \PT{y} ) = \beta_1^{(s)} \, \left( \frac{\left| \PT{x} - \PT{y} \right|}{2} \right)^2 + \mathcal{O}( \left| \PT{x} - \PT{y} \right|^4 ) \qquad \text{as $\left| \PT{x} - \PT{y} \right| \to 0$.}
\end{equation*}
Hence,
\begin{equation} \label{eq:expected.value.lower.bound}
\begin{split} 
&\mathbb{E} \left[ \left\{ \wce( \numint[X_N]; \mathbb{H}^s( \mathbb{S}^d ) ) \right\}^2 \right] \\
&\phantom{equals}\geq \frac{\beta_1^{(s)}}{4} \frac{1}{N^2} \sum_{j=1}^N \int_{D_{j,N}} \int_{D_{j,N}} \left| \PT{x} - \PT{y} \right|^2 \dd \mu_{j,N}( \PT{x} ) \dd \mu_{j,N}( \PT{y} ) - \mathcal{R}_N,
\end{split}
\end{equation}
where (as $N \to \infty$)
\begin{equation} \label{eq:expected.value.lower.bound.remainder}
\begin{split} 
\mathcal{R}_N 
&= \mathcal{O}( \frac{1}{N^2} \sum_{j=1}^N \left[ \diam D_{j,N} \right]^4 ) + \mathcal{O}( \frac{1}{N^2} \sum_{j=1}^N \left[ \diam D_{j,N} \right]^{2s-d} ) \\
&= \mathcal{O}( N^{-4/d-1} ) + \mathcal{O}( N^{-2s/d} ).
\end{split}
\end{equation}

Next, we observe that for any $c^{\prime\prime} > 0$ the following inequalities hold:
\begin{equation} \label{eq:int.inequality}
\begin{split}
&\int_{D_{j,N}} \int_{D_{j,N}} \left| \PT{x} - \PT{y} \right|^{2} \dd \mu_{j,N}( \PT{x} ) \dd \mu_{j,N}( \PT{y} ) 
\geq \mathop{\int_{D_{j,N}} \int_{D_{j,N}}}_{\left| \PT{x} - \PT{y} \right| > c^{\prime\prime} / N^{1/d}} \left| \PT{x} - \PT{y} \right|^{2} \dd \mu_{j,N}( \PT{x} ) \dd \mu_{j,N}( \PT{y} ) \\
&\phantom{\int_{D_{j,N}} \int_{D_{j,N}} \left| \PT{x} - \PT{y} \right|^{2}}\geq \left( c^{\prime\prime} \right)^{2} N^{-2/d} \, \mathop{\int_{D_{j,N}} \int_{D_{j,N}}}_{\left| \PT{x} - \PT{y} \right| > c^{\prime\prime} / N^{1/d}} \dd \mu_{j,N}( \PT{x} ) \dd \mu_{j,N}( \PT{y} ) \\
&\phantom{\int_{D_{j,N}} \int_{D_{j,N}} \left| \PT{x} - \PT{y} \right|^{2}}= \left( c^{\prime\prime} \right)^{2} N^{-2/d} \, \Bigg\{ 1 - \mathop{\int_{D_{j,N}} \int_{D_{j,N}}}_{\left| \PT{x} - \PT{y} \right| \leq c^{\prime\prime} / N^{1/d}} \dd \mu_{j,N}( \PT{x} ) \dd \mu_{j,N}( \PT{y} ) \Bigg\}.
\end{split}
\end{equation}
Since $\mu_{j,N}$ is a probability measure on $D_{j,N}$ and $\sigma_d( D_{j,N} ) = 1 / N$, we can bound the above double integral by
\begin{equation} \label{eq:D.j.N.integral.bound}
\begin{split}
\mathop{\int_{D_{j,N}} \int_{D_{j,N}}}_{\left| \PT{x} - \PT{y} \right| < c^{\prime\prime} / N^{1/d}} \dd \mu_{j,N}( \PT{x} ) \dd \mu_{j,N}( \PT{y} ) 
&\leq \int_{D_{j,N}} \int_{\mathcal{C}( \PT{x}; \theta^\prime)} \dd \mu_{j,N}( \PT{y} ) \dd \mu_{j,N}( \PT{x} ) \\
&= \mu_{j,N}( D_{j,N} ) \, \mu_{j,N}( \mathcal{C}( \PT{x}; \theta^\prime) ) = \frac{\sigma_d( \mathcal{C}( \PT{x}; \theta^\prime ))}{\sigma_d( D_{j,N} )}, 
\end{split}
\end{equation}
where $2 \sin( \theta^{\prime} / 2 ) = c^{\prime\prime} / N^{1/d}$. An application of the Funk-Hecke formula gives (cf., e.g., \cite{KuSa1998})
\begin{equation*}
\sigma_d( \mathcal{C}( \PT{x}; \theta^\prime ) ) = \frac{1}{d} \frac{\omega_{d-1}}{\omega_d} \left[ 2 \sin( \theta^\prime/2 ) \right]^{d} \left\{ 1 + \mathcal{O}( \left[ 2 \sin( \theta^\prime/2 ) \right]^2 ) \right\}. 
\end{equation*}
Hence, for $N$ sufficiently large,
\begin{equation*}
\frac{\sigma_d( \mathcal{C}( \PT{x}; \theta^\prime )}{\sigma_d( D_{j,N} )} = N \, \frac{1}{d} \frac{\omega_{d-1}}{\omega_d} \left( c^{\prime\prime} / N^{1/d} \right)^{d} \left\{ 1 + \mathcal{O}( \left( c^{\prime\prime} / N^{1/d} \right)^2 ) \right\} \leq 2 \frac{1}{d} \frac{\omega_{d-1}}{\omega_d} \left( c^{\prime\prime} \right)^{d}.
\end{equation*}
By fixing $c^{\prime\prime}$ (which now depends only on $d$) to be sufficiently small, we can always achieve that the double integral on the left-hand side of \eqref{eq:D.j.N.integral.bound} is bounded from above by $1/2$. Therefore, for sufficiently large $N$ and $j = 1, \dots, N$, we deduce from \eqref{eq:int.inequality} that
\begin{equation*}
\int_{D_{j,N}} \int_{D_{j,N}} \left| \PT{x} - \PT{y} \right|^{2} \dd \mu_{j,N}( \PT{x} ) \dd \mu_{j,N}( \PT{y} ) \geq \frac{\left( c^{\prime\prime} \right)^{2}}{2} \, N^{-2/d}.
\end{equation*}
Combining this estimate with \eqref{eq:expected.value.lower.bound} and \eqref{eq:expected.value.lower.bound.remainder} we obtain for $s > d / 2 + 1$ and $2s -d$ not an even positive integer
\begin{equation*}
\begin{split}
\mathbb{E} \left[ \left\{ \wce( \numint[X_N]; \mathbb{H}^s( \mathbb{S}^d ) ) \right\}^2 \right] 
&\geq \frac{\beta_1^{(s)}}{4} \, \frac{\left( c^{\prime\prime} \right)^{2}}{2} \, N^{-2/d-1} + \mathcal{O}( N^{-4/d-1} ) + \mathcal{O}( N^{-2s/d} ) \\
&\geq \beta \, N^{-2/d-1} = \beta \, N^{-2(d/2+1)/d},
\end{split}
\end{equation*}
where the positive constant $\beta$ depends on the $\mathbb{H}^s( \mathbb{S}^d )$-norm 
and the partition sequence $(\mathcal{D}_N)$, but is independent of $N$.
\end{proof}

\bibliographystyle{abbrv}
\bibliography{bibliography}
\end{document}